\NewDocumentCommand{\dgal}{sO{}m}{%
  \IfBooleanTF{#1}
    {\dgalext{#3}}
    {\dgalx[#2]{#3}}%
}
\NewDocumentCommand{\dgalext}{m}{%
  \sbox0{%
    \mathsurround=0pt 
    $\left\{\vphantom{#1}\right.\kern-\nulldelimiterspace$%
  }%
  \sbox2{\{}%
  \ifdim\ht0=\ht2
    \{\kern-.45\wd2 \{#1\}\kern-.45\wd2 \}%
  \else
    \left\{\kern-.5\wd0\left\{#1\right\}\kern-.5\wd0\right\}%
  \fi
}
\NewDocumentCommand{\dgalx}{om}{%
  \sbox0{\mathsurround=0pt$#1\{$}%
  \sbox2{\{}%
  \ifdim\ht0=\ht2
    \{\kern-.45\wd2 \{#2\}\kern-.45\wd2 \}%
  \else
    \mathopen{#1\{\kern-.5\wd0 #1\{}
    #2
    \mathclose{#1\}\kern-.5\wd0 #1\}}
  \fi
}
\definecolor{lightblue}{rgb}{0.22,0.45,0.70}
\numberwithin{theorem}{section}
\numberwithin{equation}{section}
\numberwithin{table}{section}
\numberwithin{figure}{section}
\newcommand{\vertiii}[1]{{\left\vert\kern-0.25ex\left\vert\kern-0.25ex\left\vert #1 
    \right\vert\kern-0.25ex\right\vert\kern-0.25ex\right\vert}}
\newcommand{\TheTitle}{Conforming, nonconforming and DG
methods for the stationary generalized Burgers-Huxley equation} 
\newcommand{\TheShortTitle}{FEM for the generalized Burgers-Huxley equation} 
\newcommand{\TheAuthors}{Arbaz Khan, Manil T Mohan and Ricardo Ruiz-Baier}
\newcommand{\TheShortAuthors}{A. Khan, M. T. Mohan \& R. Ruiz-Baier}
\headers{\TheShortTitle}{\TheShortAuthors}
\def\L{L}
\def\H{H}
\title{{\TheTitle}%
\thanks{Submitted to the editors \today.%
\funding{
AK has been supported by the Sponsored Research \& Industrial Consultancy (SRIC), Indian Institute of Technology Roorkee, India through the faculty initiation
grant MTD/FIG/100878;  
 MTM has been supported by the Department of Science and Technology (DST), India through the Innovation in Science Pursuit for Inspired Research (INSPIRE) Faculty Award IFA17-MA110; and 
RRB has been supported by the Monash Mathematics Research Fund S05802-3951284, 
and by the HPC-Europa3 Transnational Access programme through grant HPC175QA9K. 
 }}}
\author{
 Arbaz Khan\thanks{Corresponding author. 
 Department of Mathematics,   Indian Institute of Technology Roorkee (IITR), 
 Roorkee,  India- 247667  (\email{arbaz@ma.iitr.ac.in}).}
 \and
 Manil T. Mohan\thanks{
 Department of Mathematics,   Indian Institute of Technology Roorkee (IITR), 
 Roorkee,  India- 247667 (\email{maniltmohan@ma.iitr.ac.in}).}
 \and
 Ricardo Ruiz-Baier\thanks{
 School of Mathematics, Monash University, 
 9 Rainforest Walk, Melbourne, VIC 3800, Australia,
  (\email{ricardo.ruizbaier@monash.edu}).}
}
\begin{document}

\maketitle

\begin{abstract}
In this work we address the analysis of the stationary generalized Burgers-Huxley equation (a nonlinear elliptic problem with anomalous advection)  and propose conforming, nonconforming and discontinuous Galerkin finite element methods for its numerical approximation. The existence, uniqueness and regularity of weak solutions is discussed in detail using a Faedo-Galerkin approach and fixed-point theory, and a priori error estimates for all three types of numerical schemes are rigorously derived. A set of computational results are presented to show the efficacy of the proposed methods.
\end{abstract}

\begin{keywords}
A priori error analysis, Conforming finite element method, Non-conforming finite element, discontinuous Galerkin, 
Stationary generalized Burgers-Huxley equation.
\end{keywords}

\begin{AMS}
65N15,   	
65N30, 
	35J66, 
		65J15 
\end{AMS}

\section{Introduction}\label{sec1}\setcounter{equation}{0} 
The Burgers-Huxley equation is a special type of nonlinear advection-diffusion-reaction problems that are of importance in applications in mechanical engineering, material sciences, and neurophysiology. Some examples  include, for instance,  particle transport \cite{JS}, dynamics of ferroelectric materials \cite{OYNA}, action potential propagation in nerve fibers \cite{XYW}, wall motion in liquid crystals \cite{wang85}, and many others (see also \cite{VJE,MTAR} and the references therein).

Our starting point is the following stationary form of the generalized Burgers-Huxley equation with Dirichlet boundary conditions 
\begin{equation}\label{8.1}
\left\{
\begin{aligned}
-\nu\Delta u+\alpha u^{\delta}\sum\limits_{i=1}^d\frac{\partial u}{\partial x_i}-\beta u (1-u^{\delta} )(u^{\delta} -\gamma)&=f, \ \text{ in }\ \Omega, \\
u&=0,\ \text{ on }\ {\partial}\Omega,
\end{aligned}\right.
\end{equation}
where it is assumed that $\Omega\subset \mathbb{R}^d \ (d=2,3)$ is an open bounded and simply connected domain with Lipschitz boundary $\partial\Omega$. Here  $\nu>0$ is the constant diffusion coefficient, 
$\alpha>0$ is the advection coefficient, and $\beta>0$, $\delta\geq 1$, $\gamma\in(0,1)$ are model parameters modulating the interplay between non-standard nonlinear advection, diffusion, and nonlinear reaction (or applied current) contributions. 
 
  The global solvability of the one-dimensional Burgers-Huxley equation has been recently established  in \cite{MTAR}. In this paper we extend the analysis to the multi-dimensional   case.     Drawing inspiration from the techniques usually employed for the analysis of steady Navier-Stokes  equations (cf. \cite[Ch. 10]{Te}), we use a Faedo-Galerkin approximation, Brouwer's fixed-point theorem, and compactness arguments to derive the existence and uniqueness of weak solutions to the two- and three-dimensional stationary generalized Bur\-gers-Huxley equation in bounded domains with Lipschitz boundary and under a minimal regularity assumption. For the case of domains that are convex or have $C^2-$boundary, we employ the   elliptic regularity results available in, e.g., \cite{brezis_book11,grisvard_book85}, and establish that the weak solution of \eqref{8.1} satisfies $u\in H^2(\Omega)\cap H^1_0(\Omega)$. 
    
The recent literature relevant to the construction and analysis of discretizations for \eqref{8.1} and closely related problems is very diverse. For instance, numerical methods specifically designed to capture boundary layers in 
 singularly perturbed generalized Bur\-gers-Huxley equations have been studied in \cite{kumar11}, 
different types of finite differences have been used in \cite{sari11,macias18,shukla20,verma20}, spectral, B-spline and Chebyshev wavelet collocation methods 
 have been advanced in \cite{alinia19,javidi06,wasim18,celik16}, numerical solutions obtained with the so-called adomain decomposition were analyzed in \cite{hashim06}, homotopy perturbation techniques were used in 
 \cite{maurya19}, Strang splittings were proposed in \cite{cicek16}, meshless radial basis functions were studied in \cite{khattak09}, 
 generalized finite differences and finite volume schemes have been analyzed in \cite{chen03,zhou19} for the restriction of \eqref{8.1} to the diffusive Nagumo (or bistable) model, and  a finite element method satisfying a discrete maximum principle was introduced in \cite{VJE} (the latter reference is closer to the present study).  
Although there is a growing interest in developing numerical techniques for the generalized Bur\-gers-Huxley equation, it appears that 
the aspects of error analysis for finite element discretizations have not been yet thoroughly  addressed. Then, somewhat differently from the methods listed above (where we stress that such list is far from complete), here we propose a family of schemes consisting of conforming finite elements (CFEM),   non-conforming finite elements (NCFEM) and discontinuous Galerkin methods (DGFEM). Following the assumptions adopted for the continuous problem, we rigorously derive a priori error estimates indicating first-order convergence of the CFEM. In contrast, for NCFEM and DGFEM the solvability of the discrete problem does not follow from the continuous problem, but separate conditions are established to ensure the existence of discrete solutions in these cases. The minimal assumptions on the domain are also used to prove first-order a priori error bounds for NCFEM and DGFEM, and we briefly comment about $L^2-$estimates. We also include a set of computational tests that confirm the theoretical error bounds and which also show some properties of the model equation.

We have organized the remainder of the paper as follows: Section \ref{sec2} contains notational conventions and it presents the well-posedness and regularity analysis of (\ref{8.1}), discussing also some possible modifications to the proofs of existence and uniqueness of weak solutions. The numerical discretizations are introduced and then a priori error estimates are derived for CFEM, NCFEM and DGFEM in Section \ref{sec3}. Finally, Section \ref{sec4} has a compilation of numerical tests in 2D and 3D that serve to illustrate our theoretical results.

\section{Solvability of the stationary generalized Burgers-Huxley equation}\label{sec2}\setcounter{equation}{0} 
\subsection{Preliminaries} 
Throughout this section we will adopt the usual notation for functional spaces. 
In particular, for $p \in [1,\infty)$ we denote the Banach space of Lebesgue $p-$integrable functions by 
$$ L^{p}(\Omega) :=\left\{u: \int_{\Omega}|u(x)|^{p}d x <\infty \right\},$$
whereas for $p = \infty$, $L^{\infty}(\Omega)$ is the space conformed by essentially bounded measurable functions 
on the domain. Moreover, for integers $s\ge0$, by $H^s(\Omega)$ we denote the standard Sobolev spaces $W^{s,2}(\Omega)$, endowed with the norm
$\|u\|_{s,\Omega}^2 = \| u \|^2_{0,\Omega} + \sum_{|i|\leq s} \|\partial^i u\|^2_{0,\Omega}$. For $s=0$, we adopt the convention $H^0(\Omega)=L^2(\Omega)$, and recall the definition of the closure of all $C^\infty$ functions with 
compact support in $H^1(\Omega)$ 
$H^1_0(\Omega) :=\{u\in H^1(\Omega): u|_{\partial \Omega}=0\ \text{a.e.}\}$.  
If $Y(M)$ denotes a generic normed space of functions over the spatial domain $M$, then the associated norm will be 
at some instances denoted as $\|\cdot\|_{Y}$ (omitting the domain specification whenever clear from the context). 
In addition, let $H^{-1}(\Omega)$ be the dual space of the Sobolev space $H^1_0(\Omega)$ with the following norm
\begin{align*}
\|u\|_{H^{-1}(\Omega)}:=\sup_{0\neq v\in H^{1}_0(\Omega)}\frac{\langle u,v\rangle}{\|v\|_{1,\Omega}},
\end{align*} 
where $\langle\cdot,\cdot\rangle$ denotes the duality pairing between $H_0^1(\Omega)$ and $H^{-1}(\Omega)$. In the sequel, we use the same notation for the duality pairing between $L^p(\Omega)$ and its dual $L^{\frac{p}{p-1}}(\Omega)$, for $p\in(2,\infty)$. 

We proceed to rewrite problem (\ref{8.1}) in the following abstract form: 
\begin{align}\label{8p2}
\nu Au +\alpha B(u )-\beta C(u )=f,
\end{align}
where the involved operators are 
\begin{align*}
Au =-\Delta u, \quad B(u)= u^{\delta}\sum\limits_{i=1}^d\frac{\partial u}{\partial x_i},\quad \text{ and }\ C(u)=u (1-u^{\delta} )(u^{\delta} -\gamma). 
\end{align*}
For the Dirichlet Laplacian operator $A$, it is well-known that  $D(A)=H^2(\Omega)\cap H_0^1(\Omega)\subset L^p$, for $p\in[1,\infty)$ and $1\leq d\leq 4$, using the Sobolev Embedding Theorem (see, e.g., \cite{grisvard_book85}) and also $A:H_0^1(\Omega)\to H^{-1}(\Omega)$. Since $\Omega$ is bounded, the embedding  $H_0^1(\Omega)\subset L^2(\Omega)$ is compact, and hence using the spectral theorem, there exists a sequence $0<\lambda_1\leq\lambda_2\leq\ldots\to\infty$ of eigenvalues of $A$ and an orthonormal basis $\{w_k\}_{k=1}^{\infty}$ of $L^2(\Omega)$ consisting of eigenfunctions of $A$ \cite[p. 504]{RDJL}. Furthermore, we have the following Friedrichs-Poincar\'e inequality: $\sqrt{\lambda_1}\|u\|_0\leq \|\nabla u\|_0$. 

Testing \eqref{8.1}  against a smooth function $v$, integrating by parts, and applying the boundary 
condition, we end up with the following problem in weak form: 
Given any $f\in\H^{-1}(\Omega)$, find $u \in H_0^1(\Omega)$ such that 
\begin{align}\label{8p3}
\nu(\nabla u ,\nabla v)+\alpha b(u,u,v) -\beta\langle C(u ),v\rangle=\langle f,v\rangle, \quad \text{ for all }\ v\in\H_0^1(\Omega),
\end{align}
where $b(u,u,v)=\langle B(u ),v\rangle$. 

\subsection{Existence of weak solutions}
Let us first address the well-posedness of \eqref{8.1} in two dimensions.
 
\begin{theorem}[Existence of weak solutions]\label{thm6.1}
	For a given $f\in\H^{-1}(\Omega)$, there exists at least one solution to the Dirichlet problem \eqref{8.1}.
\end{theorem}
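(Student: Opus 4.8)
The plan is to follow the Faedo–Galerkin strategy the authors explicitly announce, using the orthonormal basis $\{w_k\}$ of eigenfunctions of $A$ introduced in the preliminaries. First I would construct finite-dimensional approximations: for each $m\in\N$ let $V_m=\mathrm{span}\{w_1,\ldots,w_m\}$ and seek $u_m=\sum_{k=1}^m \xi_k w_k\in V_m$ solving the Galerkin system
\begin{align*}
\nu(\nabla u_m,\nabla w_j)+\alpha b(u_m,u_m,w_j)-\beta\langle C(u_m),w_j\rangle=\langle f,w_j\rangle,\qquad j=1,\ldots,m.
\end{align*}
Viewing this as a nonlinear map $P:\R^m\to\R^m$ in the coefficients $\xi=(\xi_1,\ldots,\xi_m)$, I would invoke Brouwer's fixed-point theorem (in the standard corollary form: a continuous map $P$ on $\R^m$ with $(P(\xi),\xi)\ge 0$ on a sphere $\|\xi\|=R$ has a zero inside the ball). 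Continuity of $P$ is clear since all terms are polynomial/continuous in the coefficients; the crux is the coercivity-type a priori estimate.

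The heart of the argument is therefore the energy estimate obtained by testing with $u_m$ itself. The diffusion term gives $\nu\|\nabla u_m\|_0^2$, which by the Friedrichs–Poincaré inequality $\sqrt{\lambda_1}\|u\|_0\le\|\nabla u\|_0$ controls the full $H^1_0$ norm. I would show the advection term $\alpha b(u_m,u_m,u_m)$ either vanishes or has a favorable sign: writing $b(u,u,u)=\int_\Omega u^{\delta+1}\sum_i\partial_{x_i}u\,\d x=\tfrac{1}{\delta+2}\int_\Omega\sum_i\partial_{x_i}(u^{\delta+2})\,\d x$, an integration by parts against the zero boundary trace makes this term vanish, which is exactly the Navier–Stokes-type skew-symmetry the authors allude to. For the reaction term $\beta\langle C(u_m),u_m\rangle$ with $C(u)=u(1-u^\delta)(u^\delta-\gamma)$, I would expand the polynomial and bound it below: the leading term is $-\beta\int u^{2\delta+2}$ (negative, hence helpful on the correct side), while the remaining lower-order terms are absorbed using Young's inequality and the Poincaré inequality, possibly at the cost of requiring the nonlinearity's structure ($\gamma\in(0,1)$, $\delta\ge1$) to keep signs under control. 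This yields a bound $\|u_m\|_{1,\Omega}\le K$ uniform in $m$, establishing the sphere condition for Brouwer and simultaneously giving a uniform bound on the sequence.

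With $\{u_m\}$ bounded in $H^1_0(\Omega)$, I would extract a weakly convergent subsequence $u_m\rightharpoonup u$ in $H^1_0(\Omega)$, and by the compact embedding $H^1_0(\Omega)\hookrightarrow\hookrightarrow L^2(\Omega)$ (and, via the Sobolev embeddings noted for $d\le4$, strongly in suitable $L^p$) obtain strong $L^p$ convergence. The final step is passing to the limit in each term of the Galerkin equations against a fixed basis function $w_j$, then over dense combinations to all of $H^1_0(\Omega)$. The linear diffusion term passes by weak convergence; the main obstacle is the two nonlinear terms $b(u_m,u_m,w_j)$ and $\langle C(u_m),w_j\rangle$, where I must upgrade weak to strong convergence strongly enough to identify the limits as $b(u,u,w_j)$ and $\langle C(u),w_j\rangle$. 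This is where I expect the real work: one must verify that $u_m^\delta\nabla u_m$ and the polynomial $C(u_m)$ converge in the appropriate dual topology, using strong $L^p$ convergence of $u_m$ (for the zeroth-order factors $u_m^\delta$, $u_m^{\delta+1}$ etc.) together with weak $L^2$ convergence of $\nabla u_m$, and controlling cross terms by Hölder with exponents dictated by the embedding range $d\le4$. Once the limits are identified, $u$ satisfies \eqref{8p3}, proving existence.
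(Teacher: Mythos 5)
Your proposal follows essentially the same route as the paper's proof: a Faedo--Galerkin system on the eigenfunction basis, Brouwer's fixed-point theorem via positivity of $[P_m(u),u]$ on a large sphere (with $b(u_m,u_m,u_m)=0$ by integration by parts and the good sign of the $-\beta\langle C(u_m),u_m\rangle$ term, exactly as the paper implicitly uses), the resulting uniform $H^1_0$ and $L^{2\delta+2}$ bounds, and limit passage via weak $H^1_0$ plus compactness-induced strong $L^p$ convergence to identify the nonlinear limits against smooth test functions and then by density. The only difference is cosmetic: the paper identifies the limits of $b(u_m,u_m,v)$ and $\langle C(u_m),v\rangle$ by integrating by parts and applying Taylor's formula with H\"older and interpolation, whereas you propose the equivalent strong-times-weak product argument, which works equally well in the two-dimensional setting of the theorem.
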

\begin{proof}
We prove the existence result using the following steps.
\vskip 0.2 cm
\noindent 
 \textbf{Step 1: Finite dimensional system.} We formulate a Faedo-Galerkin approximation method. Let the functions $w_k=w_k(x),$ $k=1,2,\ldots,$  be smooth, the set $\{w_k(x)\}_{k=1}^{\infty}$ be an orthogonal basis of  $\H_0^1(\Omega)$ and orthonormal basis of $\L^2(\Omega)$. One can take $\{w_k(x)\}_{k=1}^{\infty}$ as the complete set of normalized eigenfunctions of the operator $-\Delta$ in $\H_0^1(\Omega)$. 
	For a fixed positive integer $m$, we look for a function $u_m\in\H_0^1(\Omega)$ of the form 
	\begin{equation}\label{8p4}
	u_m=\sum\limits_{k=1}^m\xi_m^kw_k,\ \xi_m^k\in\mathbb{R},
	\end{equation} 
	and 
	\begin{equation}\label{8p5}
	\nu(\nabla u_m,\nabla w_k)+\alpha b(u_m,u_m,w_k)-\beta\langle C(u_m),w_k\rangle=\langle f,w_k\rangle,
	\end{equation}
	for   $k=1,\ldots,m$. The set of equations in \eqref{8p5} is equivalent to 
	\[
	\nu Au_m+\alpha P_mB(u_m)-\beta P_mc(u_m)=P_mf. 
	\]
	Equations \eqref{8p4}-\eqref{8p5} constitute a nonlinear system for $\xi_m^1,\ldots,\xi_m^m$. We invoke \cite[Lem. 1.4]{Te} (an application of Brouwer's fixed point theorem) to prove the existence of solution to such a system. Let us consider the space $W=\text{Span}\left\{w_1,\ldots,w_m\right\}$ and the associated scalar product  $[\cdot,\cdot]=(\nabla\cdot,\nabla\cdot)$. We define  the map $P=P_m$ as 
	\[
	[P_m(u),v]=(\nabla P_m(u),\nabla v)=\nu(\nabla u,\nabla v)+\alpha b(u,u,v)-\beta \langle C(u),v\rangle -\langle f,v\rangle,
	\]
	for all $u,v\in W$. The continuity of  $P_m$ can be verified in the following way 
	\begin{align*}
&	|[P_m(u),v]|\\&\leq\left(\nu\|\nabla u\|_0+\frac{\alpha}{\delta+1}\|u\|_{L^{2(\delta+1)}}^{\delta+1}\right)\|\nabla v\|_0+\beta\left[(1+\gamma)\|u\|_{L^{2(\delta+1)}}^{\delta+1}+\gamma\|u\|_{0}\right]\|v\|_0\\&\quad+\|u\|_{\L^{2(\delta+1)}}^{2\delta+1}\|v\|_{L^{2\delta+1}}\nonumber\\&\leq \left[\left(\nu+\frac{\beta\gamma}{\lambda_1^2}\right)\|\nabla u\|_0+\left(\frac{\alpha}{\delta+1}+\frac{\beta(1+\gamma)}{\lambda_1}\right)\|u\|_{L^{2(\delta+1)}}^{\delta+1}+\|u\|_{\L^{2(\delta+1)}}^{2\delta+1}\right]\|\nabla v\|_0,
	\end{align*}
	for all $v\in H_0^1(\Omega)$. Using Sobolev's embedding, we know that $H_0^1(\Omega)\subset L^p(\Omega)$, for all $p\in[2,\infty)$, and hence the continuity follows. In order to apply \cite[Lem. 1.4]{Te}, we need to show that $$[P_m(u),u]>0, \ \text{ for } \ [u]=k>0,$$ where $[\cdot]$ denotes the norm on $W$, which is in turn the norm induced by $\H_0^1(\Omega)$. We can then use Poincar\'e's, H\"older's and Young's inequalities to estimate $[P_m(u),u]$ as 
	\begin{align*}
&	[P_m(u),u]\nonumber\\&=\nu\|\nabla u\|_{0}^2+\beta\gamma\|u\|_{0}^2+\beta\|u\|_{\L^{2\delta+2}}^{2\delta+2}-\beta(1+\gamma)(u^{\delta+1},u)-(f,u)\nonumber\\
	&\geq \nu\|\nabla u\|_{0}^2+\beta\gamma\|u\|_{0}^2+\beta\|u\|_{\L^{2\delta+2}}^{2\delta+2}-\beta(1+\gamma)\|u\|_{\L^{2\delta+2}}^{\delta+1}\|u\|_{0}-\|f\|_{\H^{-1}}\|\nabla u\|_{0}\nonumber\\
	&\geq \frac{\nu}{2}\|\nabla u\|_{0}^2+\beta\gamma\|u\|_{0}^2+\frac{\beta}{2}\|u\|_{\L^{2\delta+2}}^{2\delta+2} -\frac{\beta\delta(1+\gamma)^{\frac{2(\delta+1)}{\delta}}}{2(\delta+1)}\left(\frac{\delta+2}{\delta+1}\right)^{\frac{\delta+2}{\delta}}|\Omega|-\frac{1}{2\nu}\|f\|_{\H^{-1}}^2 \nonumber\\
	&\geq\frac{\nu}{2}\|\nabla u\|_{0}^2-\frac{\beta\delta(1+\gamma)^{\frac{2(\delta+1)}{\delta}}}{2(\delta+1)}\left(\frac{\delta+2}{\delta+1}\right)^{\frac{\delta+2}{\delta}}|\Omega|-\frac{1}{2\nu}\|f\|_{\H^{-1}}^2,
	\end{align*}
	where $|\Omega|$ is the Lebesgue measure of $\Omega$. 
	It follows that $[P_m(u),u]$ $> 0,$ for $\|u\|_{1}=\kappa,$ where $\kappa$ is sufficiently large. More precisely, the 
	analysis requires $$\kappa>\sqrt{\frac{2}{\nu}\left(\frac{\beta\delta(1+\gamma)^{\frac{2(\delta+1)}{\delta}}}{2(\delta+1)}\left(\frac{\delta+2}{\delta+1}\right)^{\frac{\delta+2}{\delta}}|\Omega|+\frac{1}{2\nu}\|f\|_{\H^{-1}}^2\right)}. $$ Thus the hypotheses of \cite[Lem. 1.4]{Te} are satisfied and a solution $u_m$ to \eqref{8p5} exists. 
	
	\bigskip 
	
 \noindent\textbf{Step 2: Uniform boundedness.} 	Next we need to show that  the solution $u_m$ is bounded. Multiplying \eqref{8p5} by $\xi_m^k$ and then adding from $k=1,\ldots,m$, we find 
	\begin{align}\label{8.9}
	&\nu\|\nabla u_m\|_{0}^2+\beta\|u_m\|_{\L^{2\delta+2}}^{2\delta+2}+\beta\gamma\|u_m\|_{0}^2\nonumber\\&=\beta(1+\gamma)(u_m^{\delta+1},u_m)+\langle f,u_m\rangle \nonumber\\&\leq\beta(1+\gamma)\|u_m\|_{\L^{2\delta+2}}^{\delta+2}|\Omega|^{\frac{\delta}{2(\delta+1)}}+\|f\|_{\H^{-1}}\|u_m\|_{1}\nonumber\\&\leq\frac{\beta}{2}\|u_m\|_{\L^{2\delta+2}}^{2\delta+2}+\frac{\beta\delta(1+\gamma)^{\frac{2(\delta+1)}{\delta}}}{2(\delta+1)}\left(\frac{\delta+2}{\delta+1}\right)^{\frac{\delta+2}{\delta}}|\Omega|+\frac{\nu}{2}\|u_m\|_{1}^2+\frac{1}{2\nu}\|f\|_{\H^{-1}}^2,
	\end{align}
	where we have used H\"older's and Young's inequalities. From \eqref{8.9}, we deduce that 
	\begin{align}\label{2p13}
	\nu\|u_m\|_{1}^2+\beta\|u_m\|_{\L^{2\delta+2}}^{2\delta+2}\leq\frac{\beta\delta(1+\gamma)^{\frac{2(\delta+1)}{\delta}}}{\delta+1}\left(\frac{\delta+2}{\delta+1}\right)^{\frac{\delta+2}{\delta}}|\Omega|+\frac{1}{\nu}\|f\|_{\H^{-1}}^2.
	\end{align}
	
		\bigskip
	
\noindent\textbf{Step 3: Passing to the limit.}  
	We have bounds for $\|u_m\|_{1}^2$ and $\|u_m\|_{\L^{2\delta+2}}^{2\delta+2}$ that are uniform and independent of $m$. Since $\H_0^1(\Omega)$ and  $\L^{2\delta+2}(\Omega)$ are reflexive, using the Banach-Alaoglu theorem, we can extract a subsequence $\{u_{m_k}\}$ of $\{u_m\}$ such that 
	\[
	\begin{cases}
	u_{m_k}&\xrightarrow{w} u , \ \text{ in }\ \H_0^1(\Omega), \ \text{ as }\ k\to\infty, \\
		u_{m_k}&\xrightarrow{w} u , \ \text{ in }\ \L^{2\delta+2}(\Omega), \ \text{ as }\ k\to\infty. 
	\end{cases}
	\]
In two dimensions we have that $\H_0^1(\Omega)\subset\L^{2\delta+2}(\Omega)$, thanks to the Sobolev embedding theorem. 	Since the embedding of $\H_0^1(\Omega)\subset\L^2(\Omega)$ is compact, one can extract a subsequence $\{u_{m_{k_j}}\}$ of $\{u_{m_k}\}$ such that 
	\begin{align}\label{2p15}
	u_{m_{k_j}}\to u , \ \text{ in }\ \L^2(\Omega), \ \text{ as }\ j\to\infty. 
	\end{align}
	Passing to limit in \eqref{8p5} along the subsequence $\{m_{k_j}\}$, we find that $u $ is a solution to \eqref{8p3}, provided one can show that 
	\begin{align*}
	B(u_{m_{k_j}})\xrightarrow{w} B(u) , \ \text{ and }\ 	C(u_{m_{k_j}})\xrightarrow{w} C(u )\ \text{ in }\ \H^{-1}(\Omega), \ \text{ as } \ j\to\infty. 
	\end{align*}
	In order to do this, we first show that $b(u_{m_{k_j}},u_{m_{k_j}},v)\to b(u,u,v),$ for all $v\in C_0^{\infty}(\Omega)$. Then, using a density argument, we obtain that $	B(u_{m_{k_j}})\xrightarrow{w} B(u) \ \text{ in }\ \H^{-1}(\Omega)$, as  $j\to\infty$. Using an integration by parts, Taylor's formula \cite[Th. 7.9.1]{ciarlet_book13}, H\"older's inequality, the estimate \eqref{2p13}, and convergence \eqref{2p15}, we obtain 
	\begin{align}\label{215}
&	|b(u_{m_{k_j}},u_{m_{k_j}},v)- b(u,u,v)|\nonumber\\&= \left|\frac{1}{\delta+1}\sum_{i=1}^2\int_{\Omega}(u_{m_{k_j}}^{\delta+1}(x)-u^{\delta+1}(x))\frac{\partial v(x)}{\partial x_i}d x\right|\nonumber\\&=\left|\sum_{i=1}^2\int_{\Omega}(\theta u_{m_{k_j}}(x)+(1-\theta)u(x))^{\delta}(u_{m_{k_j}}(x)-u(x))\frac{\partial v(x)}{\partial x_i}d x\right|\nonumber\\&\leq\|u_{m_{k_j}}-u\|_{0}\left(\|u_{m_{k_j}}\|_{\L^{2(\delta+1)}}^{\delta}+\|u\|_{\L^{2(\delta+1)}}^{\delta}\right)\|\nabla v\|_{\L^{2(\delta+1)}}\nonumber\\&\to 0\ \text{ as } \ j\to\infty, \ \text{ for all } \ v\in C_0^{\infty}(\Omega). 
	\end{align}
	Making use again of Taylor's formula, interpolation and H\"older's inequalities, we find 
	\begin{align}\label{216}
&	|(C(u_{m_{k_j}})-C(u),v)|\nonumber\\&\leq(1+\gamma)\left|\int_{\Omega}(u_{m_{k_j}}^{\delta+1}(x)-u^{\delta+1}(x))v(x)d x\right|+\left|\int_{\Omega}(u_{m_{k_j}}(x)-u(x))v(x)d x\right| \nonumber\\
&\quad+\left|\int_{\Omega}(u_{m_{k_j}}^{2\delta+1}(x)-u^{2\delta+1}(x))v(x)d x\right|\nonumber\\&\leq (1+\gamma)(\delta+1)\int_{\Omega}\left|(u_{m_{k_j}}(x)-u(x))(\theta u_{m_{k_j}}(x)+(1-\theta)u(x))^{\delta}v(x)\right|d x\nonumber\\
&\quad+\int_{\Omega}\left|(u_{m_{k_j}}(x)-u(x))v(x)\right|d x\nonumber\\
&\quad+(1+2\delta)\int_{\Omega}\left|(u_{m_{k_j}}(x)-u(x))(\theta u_{m_{k_j}}(x)+(1-\theta)u(x))^{2\delta}v(x)\right|d x\nonumber\\
&\leq(1+\gamma)(\delta+1)\|u_{m_{k_j}}-u\|_{0}\left(\|u_{m_{k_j}}\|_{\L^{2(\delta+1)}}^{\delta}+\|u\|_{\L^{2(\delta+1)}}^{\delta}\right)\|v\|_{\L^{2(\delta+1)}}\nonumber\\
&\quad+\|u_{m_{k_j}}-u\|_{0}\|v\|_{0}
+(1+2\delta)\|u_{m_{k_j}}-u\|_{\L^{\delta+1}}\left(\|u_{m_{k_j}}\|_{\L^{2(\delta+1)}}^{2\delta}+\|u\|_{\L^{2(\delta+1)}}^{2\delta}\right)\|v\|_{\L^{\infty}}\nonumber\\
&\leq \left((1+\gamma)(\delta+1)
\left(\|u_{m_{k_j}}\|_{\L^{2(\delta+1)}}^{\delta}+\|u\|_{\L^{2(\delta+1)}}^{\delta}\right)\|v\|_{\L^{2(\delta+1)}}+\|v\|_0\right)\|u_{m_{k_j}}-u\|_{0}\nonumber\\
&\quad+(1+2\delta)\|u_{m_{k_j}}-u\|_{0}^{\frac{1}{\delta}}\left(\|u_{m_{k_j}}\|_{\L^{2(\delta+1)}}^{1-\frac{1}{\delta}}+\|u\|_{\L^{2(\delta+1)}}^{1-\frac{1}{\delta}}\right)\times\nonumber\\
&\quad\quad\left(\|u_{m_{k_j}}\|_{\L^{2(\delta+1)}}^{2\delta}+\|u\|_{\L^{2(\delta+1)}}^{2\delta}\right)\|v\|_{\L^{\infty}}\to 0\ \text{ as } \ j\to\infty, \ \text{ for all } \ v\in C_0^{\infty}(\Omega).
	\end{align}
	Moreover, $u $ satisfies \eqref{8p3} and 
	\begin{align}\label{8.14}
	\nu\|u\|_{1}^2+\beta\|u\|_{\L^{2\delta+2}}^{2\delta+2}\leq\frac{\beta\delta(1+\gamma)^{\frac{2(\delta+1)}{\delta}}}{\delta+1}\left(\frac{\delta+2}{\delta+1}\right)^{\frac{\delta+2}{\delta}}|\Omega|+\frac{1}{\nu}\|f\|_{\H^{-1}}^2=:\widetilde{K},
	\end{align}
which completes the existence proof. 
\end{proof}

\subsection{Uniqueness of weak solution}
\begin{theorem}[Uniqueness]\label{thm6.2}
	Let $f\in\H^{-1}(\Omega)$ be given. Then, for  
		\begin{align}\label{8pp5}
		\nu>\max\left\{\frac{4^{\delta}\alpha^2}{\beta},\frac{\beta}{\lambda_1}\left[4^{\delta}(1+\gamma)^2(1+\delta)^2-{2\gamma}\right]\right\},
		\end{align}
		where $\lambda_1$ is the first eigenvalue of the Dirichlet Laplacian operator, the solution of \eqref{8p3} is unique. 
\end{theorem}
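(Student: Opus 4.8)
The plan is to derive a single energy inequality for the difference of two putative solutions and to show that, under \eqref{8pp5}, the only possibility is that this difference vanishes. First I would take $u_1,u_2\in\H_0^1(\Omega)$ solving \eqref{8p3} for the same datum $f$, set $w:=u_1-u_2$, subtract the two weak identities and test the result against $v=w$, obtaining
\begin{align*}
\nu\|\nabla w\|_0^2+\alpha\big[b(u_1,u_1,w)-b(u_2,u_2,w)\big]-\beta\langle C(u_1)-C(u_2),w\rangle=0.
\end{align*}
Writing $C(u)=(1+\gamma)u^{\delta+1}-\gamma u-u^{2\delta+1}$, I would move the two sign-definite reaction contributions to the left, namely $\beta\gamma\|w\|_0^2$ and the monotone term $\beta D$ with $D:=(u_1^{2\delta+1}-u_2^{2\delta+1},w)\ge0$, leaving the two \emph{indefinite} terms (the anomalous advection and the $(1+\gamma)u^{\delta+1}$ part of the reaction) on the right.

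The crux of the argument is that the single coercive quantity $\beta D$ can be made to absorb \emph{both} indefinite terms, so that the threshold \eqref{8pp5} involves only the data $\nu,\alpha,\beta,\gamma,\delta,\lambda_1$ and never the a priori bound \eqref{8.14}. The tools I would use are the pointwise algebraic inequalities $(a^{2\delta+1}-b^{2\delta+1})(a-b)\ge 4^{-\delta}|a-b|^{2\delta+2}$ and the companion bound $|\theta a+(1-\theta)b|^{2\delta}(a-b)^2\le 4^{\delta}(a^{2\delta+1}-b^{2\delta+1})(a-b)$ for $\theta\in[0,1]$. Integrating the latter and using the mean-value form of Taylor's theorem $u_1^{\delta+1}-u_2^{\delta+1}=(\delta+1)\zeta^{\delta}w$ with $\zeta=\theta u_1+(1-\theta)u_2$, I obtain the two structural estimates $\int_\Omega|\zeta|^{2\delta}w^2\,\d x\le 4^{\delta}D$ and $\|u_1^{\delta+1}-u_2^{\delta+1}\|_0^2\le 4^{\delta}(\delta+1)^2 D$, which tie the nonlinearities back to $D$ with precisely the constants seen in \eqref{8pp5}.

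For the advective difference I would integrate by parts (as in \eqref{215}) and apply Taylor's formula to write $b(u_1,u_1,w)-b(u_2,u_2,w)=-\int_\Omega\zeta^{\delta}w\sum_{i}\partial_{x_i}w\,\d x$; Cauchy--Schwarz, the first structural estimate and Young's inequality then give $\alpha|b(u_1,u_1,w)-b(u_2,u_2,w)|\le\frac{\nu}{2}\|\nabla w\|_0^2+\frac{4^{\delta}\alpha^2}{2\nu}D$. For the reaction cross term, Cauchy--Schwarz, the second structural estimate and Young's inequality yield $\beta(1+\gamma)|(u_1^{\delta+1}-u_2^{\delta+1},w)|\le\frac{\beta}{2}D+\frac{4^{\delta}\beta(1+\gamma)^2(1+\delta)^2}{2}\|w\|_0^2$. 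Inserting both bounds, the $D$-terms survive on the left with coefficient $\frac{\beta}{2}-\frac{4^{\delta}\alpha^2}{2\nu}$, nonnegative exactly when $\nu>4^{\delta}\alpha^2/\beta$ (the first condition in \eqref{8pp5}); discarding the nonnegative $D$-term and invoking the Friedrichs--Poincar\'e inequality $\lambda_1\|w\|_0^2\le\|\nabla w\|_0^2$ reduces the estimate to $\big[\tfrac{\nu}{2}-\tfrac{1}{\lambda_1}(\tfrac{4^{\delta}\beta(1+\gamma)^2(1+\delta)^2}{2}-\beta\gamma)\big]\|\nabla w\|_0^2\le0$. The bracket is strictly positive precisely under the second condition in \eqref{8pp5}, forcing $\|\nabla w\|_0=0$ and hence $w=0$.

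The main obstacle I anticipate is establishing and correctly calibrating the two pointwise algebraic inequalities so that the monotone dissipation $D$ single-handedly controls both the anomalous advection and the non-monotone reaction contribution; this is exactly what removes any dependence on $\|u_i\|$ and produces the clean threshold \eqref{8pp5}. Subtle points to verify are the sign conventions and differentiability used when manipulating the powers $u^{2\delta+1}$ and $u^{\delta+1}$ for non-integer $\delta$ (e.g. interpreting them through $|u|^{2\delta}u$ and $|u|^{\delta}u$), the justification of the integration by parts for the convective term at the $\H^1_0(\Omega)$ regularity available, and the tracking of the dimensional factor arising from $\sum_i\partial_{x_i}w$ versus $|\nabla w|$, which must be absorbed into the constants without degrading \eqref{8pp5}.
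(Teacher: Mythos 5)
Your argument is correct and is, at bottom, the paper's own proof: the same energy identity \eqref{8p18}, the same three-way splitting of the reaction difference as in \eqref{3.24}, absorption of both indefinite contributions (the anomalous advection and the $(1+\gamma)u^{\delta+1}$ part) into the monotone dissipation generated by $u^{2\delta+1}-v^{2\delta+1}$, and the same Poincar\'e finish producing exactly the two thresholds in \eqref{8pp5}. The genuine difference is in how the dissipation is packaged. You carry a single scalar $D=(u_1^{2\delta+1}-u_2^{2\delta+1},w)\ge 0$ and control everything through the pointwise strong-monotonicity inequalities $(a^{2\delta+1}-b^{2\delta+1})(a-b)\ge 4^{-\delta}|a-b|^{2\delta+2}$ (sharp, with equality at $b=-a$) and $|\theta a+(1-\theta)b|^{2\delta}(a-b)^2\le 4^{\delta}(a^{2\delta+1}-b^{2\delta+1})(a-b)$; both are true, and the second in fact holds with the $\delta$-independent constant $2$, since after normalizing $a=1$, $b=t\in[-1,1]$ it reduces to $\sup_{t}(1-t)/(1-|t|^{2\delta}t)\le 2$. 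The paper instead expands the monotone term algebraically in \eqref{3.25} to produce the paired weighted norms $\|u^{\delta}w\|_{0}^2+\|v^{\delta}w\|_{0}^2$, and bounds the Taylor mean value by $|\theta u+(1-\theta)v|^{\delta}\le 2^{\delta-1}(|u|^{\delta}+|v|^{\delta})$ in \eqref{3.26} and \eqref{6.49}; the two routes are equivalent and yield identical constants, though yours is arguably cleaner in that one functional carries all the dissipation (and, as in the paper's closing step, $D\ge 4^{-\delta}\|w\|_{L^{2\delta+2}}^{2\delta+2}$ recovers the $L^{2\delta+2}$ control). One quantitative point you flag but do not settle: Cauchy--Schwarz gives $\|\sum_i\partial_{x_i}w\|_0\le\sqrt{d}\,\|\nabla w\|_0$, so your advection absorption is really $\frac{d\,4^{\delta}\alpha^2}{2\nu}D$ rather than $\frac{4^{\delta}\alpha^2}{2\nu}D$, which as written would degrade the first condition in \eqref{8pp5} to $\nu>d\,4^{\delta}\alpha^2/\beta$. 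This is repairable in the two-dimensional setting of the theorem without touching the statement: using the sharp constant $2$ in place of $4^{\delta}$ in your second structural inequality, the absorbed term becomes $\frac{d\,\alpha^2}{\nu}D$, and $\frac{\beta}{2}\ge\frac{2\alpha^2}{\nu}$ already follows from $\nu>4^{\delta}\alpha^2/\beta$ because $4^{\delta}\ge 4$ for $\delta\ge 1$. It is worth noting that the paper's \eqref{6.49} silently drops the same $\sqrt{2}$ arising from the vector $(1,1)^{T}$, its loose convexity factor $2^{\delta-1}$ playing precisely this compensating role, so your proposal is no worse than the published argument on this point.
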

\begin{proof} 
We assume $u $ and $v$ are two weak solutions of \eqref{8p3} and define $w:=u -v$. Then $w$ satisfies:
	\begin{align}\label{8p17}
	\nu(\nabla w,\nabla v)+\alpha\langle B(u )-B(v),v\rangle -\beta\langle C(u )-C(v),v\rangle =0,
	\end{align}
	for all $v\in\H_0^1(\Omega)$. Taking $v=w$ in \eqref{8p17}, we have 
	\begin{align}\label{8p18}
	\nu\|\nabla w\|_{0}^2&=-\alpha\langle B(u)-B(v),w\rangle+\beta \langle C(u )-C(v),w\rangle. 
	\end{align}
		Then it can be readily seen that
	\begin{align}\label{3.24}
	&\beta\left[\langle  u(1-u^{\delta})(u^{\delta}-\gamma)-v(1-v^{\delta})(v^{\delta}-\gamma),w\rangle \right]\nonumber\\&=-\beta\gamma\|w\|_{0}^2-\beta(u^{2\delta+1}-v^{2\delta+1},w)+\beta(1+\gamma)(u^{\delta+1}-v^{\delta+1},w).
	\end{align}
	Let us take the term $-\beta(u^{2\delta+1}-v^{2\delta+1},w)$ from \eqref{3.24} and estimate it using H\"older's and Young's inequalities as 
	\begin{align}\label{3.25}
	-\beta(u^{2\delta+1}-v^{2\delta+1},w)&=-\beta(|u|^{2\delta}(u-v)+|u|^{2\delta}v-|v|^{2\delta}u,w+|v|^{2\delta}(u-v),w)\nonumber\\
	&=-\beta\|u^{\delta}w\|_{0}^2-\beta\|v^{\delta}w\|_0^2-\beta(|u|^{2\delta}+|v|^{2\delta},uv)+\beta(|u|^2,|v|^{2\delta})\nonumber\\&\quad+\beta(|v|^2,|u|^{2\delta})\nonumber\\&= -\frac{\beta}{2}\|u^{\delta}w\|_{0}^2-\frac{\beta}{2}\|v^{\delta}w\|_0^2-\frac{\beta}{2}((|u|^{2\delta}-|v|^{2\delta}),(|u|^2-|v|^2))\nonumber\\&\leq -\frac{\beta}{2}\|u^{\delta}w\|_{0}^2-\frac{\beta}{2}\|v^{\delta}w\|_0^2.
	\end{align}
	Next, we take the term $\beta(1+\gamma)(u^{\delta+1}-v^{\delta+1},w)$ from \eqref{3.24} and estimate it using Taylor's formula, H\"older's and Young's inequalities as 
	\begin{align}\label{3.26}
	& \beta(1+\gamma)(u^{\delta+1}-v^{\delta+1},w)\nonumber\\&=\beta(1+\gamma)(\delta+1)((\theta  u+(1-\theta)v)^{\delta}w,w)\nonumber\\&\leq \beta(1+\gamma)(\delta+1)2^{\delta-1}(\|u^{\delta}w\|_{0}+\|v^{\delta}w\|_{0})\|w\|_{0}\nonumber\\&\leq\frac{\beta}{4}\|u^{\delta}w\|_{0}^2+\frac{\beta}{4}\|v^{\delta}w\|_{0}^2+\frac{\beta}{2}2^{2\delta}(1+\gamma)^2(\delta+1)^2\|w\|_{0}^2.
	\end{align}
	Combining \eqref{3.25}-\eqref{3.26} and substituting the result back into \eqref{3.24}, we obtain 
	\begin{align}\label{327}
	&\beta\left[(u(1-u^{\delta})(u^{\delta}-\gamma)-v(1-v^{\delta})(v^{\delta}-\gamma),w)\right]\nonumber\\
	&\leq -\beta\gamma\|w\|_{0}^2-\frac{\beta}{4}\|u^{\delta}w\|_{0}^2-\frac{\beta}{4}\|v^{\delta}w\|_{0}^2+\frac{\beta}{2}2^{2\delta}(1+\gamma)^2(\delta+1)^2\|w\|_{0}^2.
	\end{align}
	On the other hand, we derive a bound for $-\alpha\langle B(u)-B(v),w\rangle$ using an integration by parts, Taylor's formula,  H\"older's and Young's inequalities. This gives  
	\begin{align}\label{6.49}
	-\alpha\langle B(u)-B(v),w\rangle&=\frac{\alpha}{\delta+1} \left((u^{\delta+1}-v^{\delta+1})\left(\begin{array}{c}1\\1\end{array}\right),\nabla w\right)\nonumber\\&=\alpha \left((u-v)(\theta u+(1-\theta)v)^{\delta}\left(\begin{array}{c}1\\1\end{array}\right),\nabla w\right)\nonumber\\&\leq 2^{\delta-1}\alpha\|\nabla w\|_{0}\left(\|u^{\delta} w\|_{0}+\|v^{\delta} w\|_{0}\right)\nonumber\\
	&\leq\frac{\nu}{2}\|\nabla w\|_{0}^2+\frac{2^{2\delta}\alpha^2}{4\nu}\|u^{\delta} w\|_{0}^2+\frac{2^{2\delta}\alpha^2}{4\nu}\|v^{\delta} w\|_{0}^2.	\end{align}
	Combining \eqref{327}-\eqref{6.49},  and substituting that back in \eqref{8p18}, we further have 
	\begin{align}\label{8.19}
&\left[	\frac{ \nu}{2}+\frac{1}{\lambda_1}\left(\beta\gamma-\frac{\beta}{2}2^{2\delta}(1+\gamma)^2(\delta+1)^2\right)\right]\|\nabla w\|_{0}^2\nonumber\\&\quad+\left(\frac{\beta}{4}-\frac{2^{2\delta}\alpha^2}{4\nu}\right)\|u^{\delta} w\|_{0}^2+\left(\frac{\beta}{4}-\frac{2^{2\delta}\alpha^2}{4\nu}\right)\|v^{\delta} w\|_{0}^2\leq 0.
	\end{align}
	It should also be noted that 
	\begin{align*}
	\|u-v\|_{L^{2\delta+2}}^{2\delta+2}&=\int_{\Omega}|u(x)-v(x)|^{2\delta}|u(x)-v(x)|^2d x\nonumber\\&\leq 2^{2\delta-1}(\|u^{\delta}(u-v)\|_{0}^2+\|v^{\delta}(u-v)\|_{0}^2).
	\end{align*}
	Thus from \eqref{8.19}, it is immediate to see that 
		\begin{align*}
	&\left[	\frac{ \nu}{2}+\frac{1}{\lambda_1}\left(\beta\gamma-\frac{\beta}{2}4^{\delta}(1+\gamma)^2(\delta+1)^2\right)\right]\|\nabla w\|_{0}^2+\frac{1}{2^{2\delta+1}}\left(\beta-\frac{4^{\delta}\alpha^2}{\nu}\right)\|w\|_{L^{2\delta+2}}^{2\delta+2}\leq 0,
	\end{align*}
and for the condition given in \eqref{8.19},  the uniqueness readily follows. 
\end{proof}

\subsection{Possible modifications in the proofs, and a regularity result}

\begin{remark}
If one uses Gagliardo-Nirenberg interpolation inequality	to  estimate the term $-\alpha\langle B(u)-B(v),w\rangle$, then it can be easily seen that 
\begin{align}\label{2.31}
-\alpha\langle B(u)-B(v),w\rangle&\leq \alpha\|\nabla w\|_{0}\|w\|_{\L^{2(\delta+1)}}\left(\|u\|_{\L^{2(\delta+1)}}^{\delta}+\|v\|_{\L^{2(\delta+1)}}^{\delta}\right)\nonumber\\
&\leq C\alpha\|\nabla w\|_{0}^{\frac{2\delta+1}{\delta+1}}\left(\|u\|_{\L^{2(\delta+1)}}^{\delta}+\|v\|_{\L^{2(\delta+1)}}^{\delta}\right)\|w\|_{0}^{\frac{1}{\delta+1}}\nonumber\\
&\leq \frac{C\alpha}{\lambda_1^{\frac{1}{2(\delta+1)}}}\left(\|u\|_{\L^{2(\delta+1)}}^{\delta}+\|v\|_{\L^{2(\delta+1)}}^{\delta}\right)\|\nabla w\|_{0}^2\nonumber\\
&\leq\frac{2C\alpha}{\lambda_1^{\frac{1}{2(\delta+1)}}}\sqrt{\frac{\widetilde{K}}{\beta}}\|\nabla w\|_{0}^2,
\end{align}
where $C$ is the constant appearing in the Gagliardo-Nirenberg  inequality. Combining \eqref{327} and \eqref{2.31}, and substituting it in \eqref{8p18}, we get 
\begin{align*}
&\left[\nu+\frac{1}{\lambda_1}\left(\beta\gamma-\frac{\beta}{2}2^{2\delta}(1+\gamma)^2(\delta+1)^2\right)-\frac{2C\alpha}{\lambda_1^{\frac{1}{2(\delta+1)}}}\sqrt{\frac{\widetilde{K}}{\beta}}\right]\|\nabla w\|_{0}^2\leq 0,
\end{align*}
Thus the uniqueness follows provided 
\begin{align}\label{234}
\nu+\frac{\beta\gamma}{\lambda_1}>\frac{\beta}{\lambda_1}2^{2\delta-1}(1+\gamma)^2(\delta+1)^2+\frac{2C\alpha}{\lambda_1^{\frac{1}{2(\delta+1)}}}\sqrt{\frac{\widetilde{K}}{\beta}},
\end{align}
where $\widetilde{K}$ is defined in \eqref{8.14}.
\end{remark}
	
\begin{remark}
For $\delta=1$ (that is, for the classical Burgers-Huxley equation), we obtain a simpler condition than \eqref{8pp5} for the uniqueness of weak solution. In this case, the estimate \eqref{327} becomes (see \cite{MTAR})
		\begin{align}\label{231}
	&\beta\left[(u(1-u)(u-\gamma)-v(1-v)(v-\gamma),w)\right]\nonumber\\
	&\leq -\beta\|uw\|_{0}^2-\beta\|vw\|_{0}^2+\beta(1+\gamma+\gamma^2)\|w\|_{0}^2.
	\end{align}
	Similarly, we estimate the term $-\alpha\langle B(u)-B(v),w\rangle$  as 
	\begin{align}\label{235}
	-\alpha\langle B(u)-B(v),w\rangle&=-\alpha [b(w,w,w) +b(w,v,w)+b(v,w,w)]\nonumber\\&=\alpha b(v,w,w)\leq
	\frac{\nu}{2}\|\nabla w\|_{0}^2+\frac{\alpha^2}{2\nu}\|vw\|_{0}^2. 
	\end{align}
	Thus, as an immediate consequence we have that 
	\[
	\left[\frac{\nu}{2}-\frac{\beta(1+\gamma+\gamma^2)}{\lambda_1}\right]\|\nabla w\|_{0}^2+ \beta\|uw\|_{0}^2+\left(\beta-\frac{\alpha^2}{2\nu}\right)\|uw\|_{0}^2\leq 0,
	\]
	and hence for $$\nu>\max\left\{\frac{2\beta(1+\gamma+\gamma^2)}{\lambda_1},\frac{\alpha^2}{2\beta}\right\},$$ the uniqueness of weak solution holds. To conclude, one can use the Ladyzhenskaya inequality to estimate $-\alpha\langle B(u)-B(v),w\rangle$. Then, 
	the bound  \eqref{235} becomes
	\begin{align}\label{237}
		-\alpha\langle B(u)-B(v),w\rangle&=\alpha b(v,w,w)=\alpha\sum_{i=1}^2\int_{\Omega}\frac{\partial v(x)}{\partial x_i}w^2(x)dx \nonumber\\&\leq \alpha\|w\|_{\L^4}^2\|\nabla v\|_{0}\leq \sqrt{2}\alpha\|w\|_{0}\|\nabla w\|_{0}\|\nabla v\|_{0}\nonumber\\
		&\leq\sqrt{\frac{2}{\lambda_1}}\alpha\|\nabla v\|_{0}\|\nabla w\|_{0}^2\leq\sqrt{\frac{2\widetilde{K}}{\lambda_1\nu}}\alpha\|\nabla w\|_{0}^2,
	\end{align}
where $\widetilde{K}$ is defined in \eqref{8.14}.	Thus, combining \eqref{231} and \eqref{237},  we have 
	\[
	\left[\nu-\sqrt{\frac{2\widetilde{K}}{\lambda_1\nu}}\alpha-\frac{\beta}{\lambda_1}(1+\gamma+\gamma^2)\right]\|\nabla w\|_{0}^2+ \beta\|uw\|_{0}^2+\beta\|uw\|_{0}^2\leq 0,
	\]
	and hence the uniqueness follows in this case for $\nu>\sqrt{\frac{2\widetilde{K}}{\lambda_1\nu}}\alpha+\frac{\beta}{\lambda_1}(1+\gamma+\gamma^2)$.
\end{remark}

\begin{remark}\label{rem2.3} For the three-dimensional case, the existence of weak solution to \eqref{8.1} can be 
established for $1\leq \delta<\infty$. Since the proof of  Theorem \ref{thm6.1} involves only interpolation inequalities (see \eqref{215} and \eqref{216}), we infer that  \eqref{8.1} has a weak solution for all $1\leq\delta<\infty$. An application of Sobolev's inequality yields $\H_0^1(\Omega)\subset\L^{2\delta+2}(\Omega)$, for all $1\leq\delta\leq 2$ and hence, in three dimensions, the definition of weak solution given in \eqref{8p3} makes sense for all $v\in\H_0^1(\Omega)\cap\L^{2\delta+2}(\Omega)$, for $2<\delta<\infty$. For the condition given in \eqref{8pp5}, the uniqueness of weak solution follows verbatim as in the proof of Theorem \ref{thm6.2}, since we are only invoking an interpolation inequality (see \eqref{3.26}). 

For $1\leq\delta\leq 2$, the condition given in \eqref{234} needs to be replaced by 
\[
\nu+\frac{\beta\gamma}{\lambda_1}>\frac{\beta}{\lambda_1}2^{2\delta-1}(1+\gamma)^2(\delta+1)^2+\frac{2C\alpha}{\lambda_1^{\frac{2-\delta}{4(\delta+1)}}}\sqrt{\frac{\widetilde{K}}{\beta}},
\]
where $\widetilde{K}$ is defined in \eqref{8.14}. This change is needed since the estimate \eqref{2.31} should be replaced by 
\begin{align*}
-\alpha\langle B(u)-B(v),w\rangle&\leq \alpha\|\nabla w\|_{0}\|w\|_{\L^{2(\delta+1)}}\left(\|u\|_{\L^{2(\delta+1)}}^{\delta}+\|v\|_{\L^{2(\delta+1)}}^{\delta}\right)\nonumber\\
&\leq C\alpha\|\nabla w\|_{0}^{\frac{5\delta+2}{2(\delta+1)}}\|w\|_{0}^{\frac{2-\delta}{2(\delta+1)}}\left(\|u\|_{\L^{2(\delta+1)}}^{\delta}+\|v\|_{\L^{2(\delta+1)}}^{\delta}\right)\nonumber\\&\leq\frac{C\alpha}{\lambda_1^{\frac{2-\delta}{4(\delta+1)}}}\left(\|u\|_{\L^{2(\delta+1)}}^{\delta}+\|v\|_{\L^{2(\delta+1)}}^{\delta}\right)\|\nabla w\|_{0}^2\nonumber\\
&\leq \frac{2C\alpha}{\lambda_1^{\frac{2-\delta}{4(\delta+1)}}} \sqrt{\frac{\widetilde{K}}{\beta}}, \qquad \text{for $1\leq\delta\leq 2$},
\end{align*}
where we have applied Holder's, Gagliardo-Nirenberg's and Young's inequalities. 
\end{remark}
\begin{theorem}[Regularity]
If $\Omega\subset\mathbb{R}^d,d=2,3,$ is either convex, or a domain with $C^2$-boundary and $f\in L^2(\Omega)$, then the weak solution of \eqref{8.1} belongs to $ H^2(\Omega)$. 
\end{theorem}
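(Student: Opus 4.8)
The plan is to regard the nonlinear terms as a fixed right-hand side and then invoke the linear elliptic regularity theory for the Dirichlet Laplacian on the class of domains under consideration. Rewriting the abstract form \eqref{8p2} as $-\nu\Delta u = f - \alpha B(u) + \beta C(u) =: g$, the objective collapses to showing $g \in L^2(\Omega)$: once this holds, the regularity results for $-\Delta$ on convex or $C^2$ domains (see \cite{brezis_book11,grisvard_book85}) furnish $u \in H^2(\Omega) \cap H_0^1(\Omega)$ together with an estimate of the form $\|u\|_{2,\Omega} \le C\|g\|_{0,\Omega}$. The inputs are the a priori bounds already available for the weak solution, namely $u \in H_0^1(\Omega) \cap L^{2\delta+2}(\Omega)$ coming from \eqref{8.14}, together with the Sobolev embeddings $H_0^1(\Omega) \hookrightarrow L^p(\Omega)$ (every finite $p$ when $d=2$, and $p \le 6$ when $d=3$).

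First I would dispose of the reaction term $C(u) = -u^{2\delta+1} + (1+\gamma)u^{\delta+1} - \gamma u$ by treating each monomial separately: $\gamma u \in L^2(\Omega)$ is immediate, and $(1+\gamma)u^{\delta+1} \in L^2(\Omega)$ is precisely the content of the $L^{2\delta+2}$ bound in \eqref{8.14}. The top-order term $u^{2\delta+1}$ requires $u \in L^{4\delta+2}(\Omega)$, which costs nothing when $d=2$ but when $d=3$ is guaranteed directly only for $\delta=1$; for larger $\delta$ it must be absorbed into the bootstrap described below. The genuinely delicate contribution is the anomalous advection $B(u) = u^\delta \sum_{i=1}^d \partial u/\partial x_i$, because a priori only $\nabla u \in L^2(\Omega)$ is known: pairing $u^\delta$ (which lies in every $L^p(\Omega)$ when $d=2$) with $\nabla u \in L^2(\Omega)$ via H\"older's inequality places $B(u)$ only in $L^s(\Omega)$ for each $s<2$, falling just short of the required exponent $s=2$.

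The hard part is therefore to recover the missing integrability so as to push $B(u)$ (and, when $d=3$, also $u^{2\delta+1}$) into $L^2(\Omega)$, and I expect this to demand a bootstrap built on $L^p$ elliptic estimates rather than a single use of H\"older's inequality. Concretely, knowing $g \in L^s(\Omega)$ for some $s$ slightly below $2$, the Calder\'on--Zygmund $W^{2,s}$ theory gives $u \in W^{2,s}(\Omega)$, whence the embedding $W^{2,s}(\Omega) \hookrightarrow W^{1,s^\ast}(\Omega)$ with $\tfrac{1}{s^\ast} = \tfrac1s - \tfrac1d$ raises $\nabla u$ to an exponent $s^\ast > 2$. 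Re-estimating $B(u) = u^\delta\nabla u$ with this sharper gradient, and $u^{2\delta+1}$ with the correspondingly improved integrability of $u$, then returns $g \in L^2(\Omega)$, and a final application of the $H^2$ elliptic estimate completes the proof. When $d=2$ a single bootstrap step suffices, since $u$ already belongs to every $L^p(\Omega)$; when $d=3$ the step must be iterated finitely many times and the admissible range of $\delta$ tracked so that each intermediate exponent stays within the Sobolev window, and this balancing of $\delta$, the dimension, and the gain per iteration is the principal technical obstacle.
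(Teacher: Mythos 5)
Your overall strategy---freeze the nonlinearity as a right-hand side $g=f-\alpha B(u)+\beta C(u)$, show $g\in L^2(\Omega)$, and conclude by linear elliptic regularity---is sound in spirit, and in two dimensions your single bootstrap step does close the argument. But in three dimensions the iteration you describe has a genuine gap: it produces \emph{zero} gain per step once $\delta\ge 2$, so no finite number of iterations can reach $s=2$. Concretely, for $d=3$, $\delta=2$ the available data are $u\in L^{6}(\Omega)$ (note $L^{2\delta+2}=L^6$ coincides with the Sobolev range) and $\nabla u\in L^2(\Omega)$, whence H\"older gives $u^{\delta}\nabla u\in L^{6/5}$ and $u^{2\delta+1}=u^{5}\in L^{6/5}$, i.e.\ $g\in L^{6/5}$; but the embeddings $W^{2,6/5}(\Omega)\hookrightarrow W^{1,2}(\Omega)$ and $W^{2,6/5}(\Omega)\hookrightarrow L^{6}(\Omega)$ return \emph{exactly} the exponents you started from, so the scheme is stationary at the critical case and strictly loses ground for $\delta>2$ (e.g.\ for $\delta=4$ one gets $g\in L^{10/9}$ and $W^{2,10/9}$ embeds only into $W^{1,30/17}\cap L^{30/7}$, weaker than the input). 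Since the theorem is asserted for all $\delta\ge1$ in $d=2,3$, this is not a bookkeeping issue you can postpone: the "balancing of $\delta$, the dimension, and the gain per iteration" you flag as the principal obstacle is in fact an obstruction for $\delta\ge2$. A secondary point: for merely convex (non-smooth) domains, the $W^{2,s}$ estimates with $s<2$ that your bootstrap needs are true but lie outside the references the paper relies on, which cover only the $H^2$ case (\cite[Th.~9.25]{brezis_book11}, \cite[Th.~3.2.1.2]{grisvard_book85}); one would have to invoke Adolfsson-type convex-domain $L^p$ theory separately.

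The idea your proposal is missing is a \emph{weighted} energy estimate that is invisible to H\"older-plus-Calder\'on--Zygmund reasoning. The paper tests the Galerkin system \eqref{8p5} with $u_m^{2\delta+1}$: the diffusion term contributes the positive quantity $\nu(2\delta+1)\|u_m^{\delta}\nabla u_m\|_{0}^2$ and the reaction contributes $\beta\|u_m\|_{L^{4\delta+2}}^{4\delta+2}$, yielding the uniform bound \eqref{242}. This gives at once $\|B(u_m)\|_{0}\le\|u_m^{\delta}\nabla u_m\|_{0}\le C$ and $u_m^{2\delta+1}\in L^2(\Omega)$ \emph{for every} $\delta\ge1$ and $d=2,3$---precisely the two terms your decomposition identifies as delicate. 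A second test with $Au_m$ then bounds $\|Au_m\|_0$ directly, and after passing to the limit only the $s=2$ elliptic estimate (the one actually covered by the cited theorems) is needed. If you want to salvage your route, you should first reproduce this weighted estimate at the level of the weak solution (formally, test \eqref{8p3} with $u^{2\delta+1}$, justified by truncation) to obtain $u^{\delta}\nabla u\in L^2(\Omega)$ and $u\in L^{4\delta+2}(\Omega)$; with that in hand, $g\in L^2(\Omega)$ follows in one line, no bootstrap is required, and the convex-domain citation issue disappears as well.
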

\begin{proof}
 Let us first assume that $f\in\L^2(\Omega)$. 	Proceeding to multiply \eqref{8p5} by $u_m^{2\delta}\xi_m^k$ and then adding from $k=1,\ldots,m$, we get
 \begin{align*}
 &\nu(2\delta+1)\|u_m^{\delta}\nabla u_m\|_{0}^2+\beta\gamma\|u_m\|_{\L^{2\delta+2}}^{2\delta+2}+\beta\|u_m\|_{\L^{4\delta+2}}^{4\delta+2}\nonumber\\&=\beta(1+\gamma)(u_m^{\delta+1},|u_m|^{2\delta}u_m)+(f,|u_m|^{2\delta}u_m)\nonumber\\
 &\leq\frac{\beta}{2}\|u_m\|_{\L^{4\delta+2}}^{4\delta+2}+\beta(1+\gamma)^2\|u_m\|_{\L^{2\delta+2}}^{2\delta+2}+\frac{1}{\beta}\|f\|_{0}^2,
 \end{align*}
 where we used the Cauchy-Schawrz and Young inequalities. Thus, using \eqref{2p13}, it is immediate to see that 
 \begin{align}\label{242}
 &\nu(2\delta+1)\|u_m^{\delta}\nabla u_m\|_{0}^2+\frac{\beta}{2}\|u_m\|_{\L^{4\delta+2}}^{4\delta+2}\leq(1+\gamma+\gamma^2)\widetilde{K}+\frac{1}{\beta}\|f\|_{0}^2. 
 \end{align} Multiplying \eqref{8p5} by $\lambda_k\xi_m^k$ and then adding from $k=1,\ldots,m$, we can assert that
	\begin{align}\label{8p14}
	\nu\|Au_m \|_{0}^2&=-\alpha (B(u_m ),Au_m )+\beta(C(u_m),Au_m)+(f,Au_m). 
	\end{align}
	Let us take the term $-\alpha (B(u_m ),Au_m )$ from \eqref{8p14} and estimate it using \eqref{242}. Then, H\"older's and Young's inequalities give the following bound
	\begin{align}\label{243}
	\alpha |(B(u_m ),Au_m )|&\leq\alpha \|B(u_m)\|_{0}\|A u_m\|_{0}\leq\alpha\|u_m^{\delta}\nabla u_m\|_{0}\|Au_m\|_{0}\nonumber\\
	&\leq\frac{\nu}{4}\|Au_m\|_{0}^2+\frac{\alpha^2}{\nu}\|u_m^{\delta}\nabla u_m\|_{0}^2.
	\end{align}
	Integrating by parts and applying H\"older's and Young's inequalities,  we find 
	\begin{align*}
	\beta(C&(u_m),Au_m)\nonumber\\
	&= -\beta\gamma\|\nabla u_m\|_{0}^2-\beta(2\delta+1)\|u_m^{\delta}\nabla u_m\|_{0}^2+\beta(1+\gamma)(\delta+1)(u_m^{\delta}\nabla u_m,\nabla u_m)\nonumber\\
	&\leq -\beta\gamma\|\nabla u_m\|_{0}^2-\frac{\beta(2\delta+1)}{2}\|u_m^{\delta}\nabla u_m\|_{0}^2+\frac{\beta(1+\gamma)^2(\delta+1)^2}{2(2\delta+1)}\|\nabla u_m\|_{0}^2. 
	\end{align*}
	Then we use the Cauchy-Schwarz and Young's inequalities to estimate $|(f,Au_m)|$ as 
	\begin{align}\label{2.22}
	|(f,Au_m)|\leq\|f\|_{0}\|Au_m\|_{0}\leq\frac{\nu}{4}\|Au_m\|_{0}^2+\frac{1}{\nu}\|f\|_{0}^2. 
	\end{align}
	Combining \eqref{243}-\eqref{2.22} and substituting the outcome back in \eqref{8p14}, we obtain 
	\begin{align*}
	&	\frac{\nu}{2}\|Au_m \|_{0}^2+\frac{\beta(2\delta+1)}{2}\|u_m^{\delta}\nabla u_m\|_{0}^2\nonumber\\&\leq \frac{\alpha^2}{\nu}\|u_m^{\delta}\nabla u_m\|_{0}^2+\frac{\beta((1+\gamma^2)(\delta+1)^2+2\gamma\delta^2)}{2(2\delta+1)}\|\nabla u_m\|_{0}^2+\frac{1}{\nu}\|f\|_{0}^2.
	\end{align*} 
From the estimates \eqref{2p13} and \eqref{242}, we infer  that $u_m\in D(A)$. Once again invoking the Banach-Alaoglu theorem, we can extract a subsequence $\{u_{m_k}\}$ of $\{u_m\}$ such that 
\begin{equation*}
\left\{\begin{aligned}
 u_{m_k}&\xrightarrow{w}u\ \text{ in }\ L^{4\delta+2}(\Omega)\ \text{ as }\ k\to\infty,\\
 u_{m_k}&\xrightarrow{w}u\ \text{ in }\ D(A)\ \text{ as }\ k\to\infty,
\end{aligned}
\right.\end{equation*} 
since the weak limit is unique. Using the compact embedding of $H^2(\Omega)\subset H^1(\Omega)$, along a subsequence, we further have 
$$u_{m_{k_j}}\to u\ \text{ in }\ H^1(\Omega), \ \text{ as }\ j\to\infty. $$ 
 Proceeding similarly as in the proof of Theorem \ref{thm6.1}, we obtain that $u\in D(A)$  satisfies $$\nu Au +\alpha B(u )-\beta C(u )=f,\ \text{ in }\ L^2(\Omega), $$ and 
 $$\|Au\|_{0}^2+\|u^{\delta}\nabla u\|_0^2+\|u\|_{L^{4\delta+2}}^{4\delta+2}\leq C(\|f\|_0,\nu,\alpha,\beta,\gamma,\delta).$$ 
  But, we know that 
 \begin{align*}
& \|f-\alpha B(u )+\beta C(u )\|_0\\&\leq \|f\|_0+\alpha\|u^{\delta}\nabla u\|_0+\beta\gamma\|u\|_0+\beta(1+\gamma)\|u\|_{L^{2\delta+2}}^{\delta+1}+\beta\|u\|_{L^{4\delta+2}}^{2\delta+1}<\infty, 
  \end{align*}
  and hence an application of \cite[Th. 9.25]{brezis_book11} (for a domain with $C^2$ boundary) or \cite[Th. 3.2.1.2]{grisvard_book85} (for convex domains) yields $u\in H^2(\Omega)$. 
\end{proof}


\section{Numerical schemes and their a priori error estimates}\label{sec3}\setcounter{equation}{0} 
Let the domain $\Omega$ be partitioned into a mesh (consisting of shape-regular triangular or rectangular 
cells $K$) denoted by $\mathcal{T}_h$. We use the symbols $\mathcal{E}_h$, $\mathcal{E}^i_h$ and $\mathcal{E}^{\partial}_h$ to denote  the set of edges,  interior edges and 
 boundary edges of the mesh, respectively. For a given $\mathcal{T}_h$, 
the notations $C^{0}(\mathcal{T}_h)$ and $H^s(\mathcal{T}_h)$ indicate  broken spaces associated with 
continuous and differentiable function spaces, respectively.
\subsection{Conforming method} 
	Let $V_h$ be a finite dimensional subspace of $\H_0^1(\Omega)$ associated with the mesh parameter $h$. Numerical solutions are sought in the family $\{V_h\}\subset\H_0^1(\Omega),$ (where one additionally assumes that $h$ is sufficiently small) satisfying the following approximation 
	property (see \cite{VTh})
\begin{align*}
\inf_{\chi\in V_h}\left\{\|u-\chi\|_{0}^2+h\|\nabla(u-\chi)\|_{0}^2\right\}\leq Ch^k\|u\|_{k},
\end{align*}
for all $u\in\H^r(\Omega)\cap\H_0^1(\Omega)$, $ 1\leq k\leq r$, where $r$ is the order of accuracy of the family $\{V_h \}$. 
The CFEM for \eqref{8p2} reads: find $u_h\in V_h$ such that 
\begin{equation}\label{7p1}
\nu a(u_h,\chi)+\alpha b(u_h,u_h,\chi)=\beta\langle C(u_h),\chi\rangle+\langle f,\chi\rangle, \qquad \forall \chi \in V_h.
\end{equation}

\begin{theorem}[Existence of a discrete solution]\label{excgfem}
 Equation (\ref{7p1}) admits at least one solution $u_h\in V_h$.
\end{theorem}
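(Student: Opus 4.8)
The plan is to replicate, essentially verbatim, the finite-dimensional argument of Step~1 in the proof of Theorem~\ref{thm6.1}, now with $V_h$ itself playing the role of the Galerkin space $W$. Since $V_h\subset\H_0^1(\Omega)$ is finite dimensional and conforming, there is no need to pass through eigenfunctions or to project: I would endow $V_h$ with the inner product $[\cdot,\cdot]=a(\cdot,\cdot)=(\nabla\cdot,\nabla\cdot)$ inherited from $\H_0^1(\Omega)$, and define the map $P\colon V_h\to V_h$ through the Riesz representation
\begin{align*}
[P(u_h),\chi]=\nu a(u_h,\chi)+\alpha b(u_h,u_h,\chi)-\beta\langle C(u_h),\chi\rangle-\langle f,\chi\rangle,\qquad\forall\,\chi\in V_h.
\end{align*}
By construction, a zero of $P$ is precisely a solution of \eqref{7p1}, so it suffices to produce one via \cite[Lem. 1.4]{Te}.

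Next I would verify the two hypotheses of that lemma. Continuity of $P$ on $V_h$ follows exactly as in the continuity estimate for $P_m$ in Theorem~\ref{thm6.1}: each nonlinear contribution of $B$ and $C$ is controlled by powers of $\|u_h\|_{\L^{2(\delta+1)}}$ using the Sobolev embedding $\H_0^1(\Omega)\subset\L^p(\Omega)$, and, in any case, all norms are equivalent on the finite-dimensional space $V_h$ (whose elements are bounded), so the form is well defined and continuous. For the coercivity-type condition, the computation of $[P(u_h),u_h]$ is identical to the one carried out in Step~1 of Theorem~\ref{thm6.1}, via Poincar\'e's, H\"older's and Young's inequalities, and yields
\begin{align*}
[P(u_h),u_h]\ge\frac{\nu}{2}\|\nabla u_h\|_0^2-\frac{\beta\delta(1+\gamma)^{\frac{2(\delta+1)}{\delta}}}{2(\delta+1)}\left(\frac{\delta+2}{\delta+1}\right)^{\frac{\delta+2}{\delta}}|\Omega|-\frac{1}{2\nu}\|f\|_{\H^{-1}}^2.
\end{align*}
Hence $[P(u_h),u_h]>0$ whenever $\|u_h\|_1=\kappa$, with $\kappa$ chosen larger than the same data-dependent threshold appearing in Theorem~\ref{thm6.1}. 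Invoking \cite[Lem. 1.4]{Te} then delivers some $u_h\in V_h$ with $P(u_h)=0$, i.e.\ a solution of \eqref{7p1} satisfying $\|u_h\|_1\le\kappa$.

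I do not anticipate a genuine obstacle, since the discrete problem is already a bona fide finite-dimensional system and the conformity $V_h\subset\H_0^1(\Omega)$ lets every continuous estimate transfer directly. The only point requiring a word of care is that a general $V_h$, unlike the eigenfunction basis employed in Theorem~\ref{thm6.1}, need not be invariant under $A$; but this is immaterial, because $P$ is defined purely through the weak pairing and the $\H_0^1$ Riesz structure, so $A$ is never required to map $V_h$ into itself. I would finally remark that the bound $\|u_h\|_1\le\kappa$ depends only on the data $(\nu,\beta,\gamma,\delta,|\Omega|,\|f\|_{\H^{-1}})$ and hence is uniform in $h$, which is exactly what the forthcoming a priori error analysis requires.
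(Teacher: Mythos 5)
Your argument is exactly what the paper intends: its proof of Theorem~\ref{excgfem} simply says the result is a direct consequence of Theorem~\ref{thm6.1}, i.e.\ Step~1 of that proof (the map $P$, the continuity estimate, the bound $[P(u),u]>0$ on a sphere, and \cite[Lem.~1.4]{Te}) applies verbatim with the finite-dimensional conforming space $V_h$ in place of the Galerkin span. Your write-up fills in those details correctly, including the useful observation that the resulting bound $\|u_h\|_1\le\kappa$ is uniform in $h$.
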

\begin{proof}
It follows as a direct consequence of Theorem \ref{thm6.1}.
\end{proof}
Let  $R^h$ be the elliptic or Ritz projection  onto $V_h$ (see \cite{VTh}), defined by  
\begin{align*}
(\nabla R^hv,\nabla\chi)=(\nabla v,\nabla\chi), \text{ for all }\ \chi\in V_h \ \text{ for }\ v\in\H_0^1(\Omega). 
\end{align*}
By setting $\chi=R^hv$ above,  we readily obtain  that the Ritz projection is stable, that is, $\|\nabla R^hv\|_{0}\leq\|\nabla v\|_{0}$, for all $v\in\H_0^1(\Omega)$. Moreover, using \cite[Lem. 1.1]{VTh}, we have 
\begin{align}\label{7a1}
\|R^hv-v\|_{0}+h\|\nabla(R^hv-v)\|_{0}\leq Ch^s\|v\|_{s},
\end{align}
for all $ v\in\H^s(\Omega)\cap\H_0^1(\Omega)$, $1\leq s\leq r$.

	\begin{theorem}[Energy estimate]\label{thm7.1}
	Let $V_h$ be a finite dimensional subspace of $\H_0^1(\Omega)$. Assume that \eqref{234} holds true and that $u\in D(A)=\H_0^1(\Omega)\cap \H^2(\Omega)$ satisfies \eqref{8p2}. Then the error incurred by the Galerkin approximation satisfies 
	\[
	\|u_h-u\|_{1}\leq Ch,
	\]
	where $C$ is a constant possibly depending on $\nu,\alpha,\beta,\gamma,\delta$, $\|f\|_{0}$, but independent of $h$.   	
\end{theorem}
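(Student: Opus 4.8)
The plan is to use the standard splitting of the error through the Ritz projection, $u_h - u = (u_h - R^h u) + (R^h u - u) =: \theta + \rho$, where the projection error $\rho$ is controlled directly by the approximation estimate \eqref{7a1}, namely $\|\nabla\rho\|_{0}\leq Ch\|u\|_{2}$. Since Theorem~3.1 guarantees $u\in D(A)$ with an a priori bound on $\|u\|_{2}$ depending only on the data, it suffices to show $\|\nabla\theta\|_{0}\leq Ch$. To this end I would subtract the continuous weak formulation \eqref{8p3} (tested against $\chi\in V_h\subset\H_0^1(\Omega)$) from the discrete one \eqref{7p1}, obtaining the error equation
\begin{align*}
\nu(\nabla(u_h-u),\nabla\chi)+\alpha\bigl(b(u_h,u_h,\chi)-b(u,u,\chi)\bigr)-\beta\langle C(u_h)-C(u),\chi\rangle=0,
\end{align*}
for all $\chi\in V_h$. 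Using the defining property of the Ritz projection, the diffusion term simplifies to $\nu(\nabla\theta,\nabla\chi)$, and I would then set $\chi=\theta$.

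The core of the argument is to reproduce the coercivity/monotonicity structure already exploited in the uniqueness proof (Theorem~3.2), but now applied to the error $\theta$ with the projection error $\rho$ appearing as a forcing term. First I would move all $\rho$-dependent contributions to the right-hand side, so that the left-hand side reads $\nu\|\nabla\theta\|_{0}^2$ plus the reaction and advection differences evaluated on $\theta$. The reaction term $-\beta\langle C(u_h)-C(u),\theta\rangle$ is handled exactly as in \eqref{3.24}--\eqref{327}: the highest-order part $-\beta(u_h^{2\delta+1}-u^{2\delta+1},\cdot)$ yields nonnegative quantities $\tfrac{\beta}{2}\|u_h^{\delta}\theta\|_0^2+\tfrac{\beta}{2}\|u^{\delta}\theta\|_0^2$ (after accounting for the $\rho$-split), while the lower-order part is absorbed by Young's inequality into a $\|\theta\|_0^2$ contribution controlled via Poincar\'e by $\|\nabla\theta\|_0^2$. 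The advection difference is estimated through integration by parts and Taylor's formula as in \eqref{6.49} or \eqref{2.31}, producing a term of the form $\tfrac{\nu}{2}\|\nabla\theta\|_0^2$ plus multiples of $\|u_h^{\delta}\theta\|_0^2$ and $\|u^{\delta}\theta\|_0^2$ that the reaction positivity can absorb. Assumption \eqref{234} is precisely what guarantees that, after collecting these contributions, the coefficient in front of $\|\nabla\theta\|_0^2$ remains strictly positive.

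The right-hand side consists of the projection-error terms: $\nu(\nabla\rho,\nabla\theta)$ vanishes by the Ritz property, and the nonlinear remainders involving $\rho$ must be bounded by $\|\nabla\rho\|_0$ (or $\|\rho\|_0$) times powers of $\theta$ and of the a priori-bounded norms $\|u\|_{\L^{2(\delta+1)}}$, $\|u_h\|_{\L^{2(\delta+1)}}$. Here the uniform bound \eqref{8.14} on $\|u\|_1,\|u\|_{\L^{2\delta+2}}$ together with the discrete analogue (coming from Theorem~3.3 via the same energy estimate applied to $u_h$) keeps all such factors bounded independently of $h$. After applying H\"older and Young's inequalities and absorbing the $\|\nabla\theta\|_0^2$-type contributions into the coercive left-hand side, I arrive at an inequality of the form $c\|\nabla\theta\|_0^2\leq C\|\nabla\rho\|_0^2$ with $c>0$, whence $\|\nabla\theta\|_0\leq Ch\|u\|_2\leq Ch$. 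The triangle inequality combined with \eqref{7a1} then gives $\|u_h-u\|_1\leq Ch$.

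The main obstacle I anticipate is controlling the $\rho$-dependent nonlinear terms uniformly in $h$: the higher-order nonlinearities $u^{2\delta+1}$ and $u^{\delta+1}$ generate cross terms in which $\rho$ is multiplied by high powers of $u$ and $u_h$, and one must ensure every such factor lives in the $\L^{2(\delta+1)}$ scale where the a priori bounds apply, while the weighted quantities $\|u_h^{\delta}\theta\|_0$ are reserved for absorption rather than estimation. Keeping this bookkeeping consistent with the sign structure of \eqref{327}—so that the genuinely positive terms are not wasted—will require care, and it is where the smallness/largeness condition \eqref{234} enters decisively.
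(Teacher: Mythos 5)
Your proposal is correct and follows essentially the same route as the paper: split the error via an approximation $W\in V_h$ (you take $W=R^h u$), test the error equation with the discrete error, reuse the monotonicity estimates \eqref{327} and \eqref{6.49} from the uniqueness proof with \eqref{234} guaranteeing a positive coefficient for $\|\nabla\theta\|_0^2$, and bound the remaining projection-error terms via Taylor's formula, H\"older, Young and \eqref{7a1}. The only cosmetic difference is that your choice of the Ritz projection makes the diffusion cross term $\nu(\nabla\rho,\nabla\chi)$ vanish, whereas the paper keeps a generic $W$ and simply carries $\nu(\nabla(u-W),\nabla\chi)$ to the right-hand side.
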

\begin{proof}
Using triangle inequality we can write 
\begin{equation}\label{7p7}
\|u_h-u\|_{1}\le \|u_h-W\|_{1}+\|W-u\|_{1},
\end{equation}
where $W\in V_h$. We need to estimate $\|u_h-W\|_{1}$. First we note that from \eqref{7a1}, the second term in the RHS of \eqref{7p7} satisfies 
\[
\|W-u\|_{1}\le Ch.
\]
Next, and using \eqref{8p3} and \eqref{7p1}, we can assert that $u^h-u$ satisfies
	\begin{align}\label{7p2}
	\nu a(u_h-u,\chi)= -\alpha[b(u_h,u_h,\chi)-b(u,u,\chi)]+\beta[\langle C(u_h),\chi\rangle -\langle C(u),\chi\rangle], 
	\end{align}
	for all $\chi\in V_h$. Let us choose $\chi=u_h-W\in V_h$ in \eqref{7p2}, to eventually obtain 
	\begin{align}\label{7p3}
	\nu a(u_h-u,u_h-W)&= -\alpha[b(u_h,u_h,u_h-W)-b(u,u,u_h-W)]\nonumber\\
	&\quad+\beta[\langle C(u_h),u_h-W\rangle -\langle C(u),u_h-W\rangle ]. 
	\end{align}
On the other hand, we can write $u_h-u$ as $u_h-W+W-u$ in \eqref{7p3} to find
	\begin{align*}
	\nu\|\nabla(u_h-W)\|_{0}^2&=-\nu(\nabla(W-u),\nabla \chi)-\alpha[b(u_h,u_h,\chi)-b(W,W,\chi)]\\
	&\quad-\alpha[b(W,W,\chi)-b(u,u,\chi)]+\beta[\langle C(u_h),\chi\rangle-\langle C(W),\chi\rangle ]\\
	&\quad+\beta[\langle C(W),\chi\rangle -\langle C(u),\chi\rangle ].
	\end{align*}	
	Thus, following \eqref{327} and \eqref{6.49}, we can establish the bound 
	\begin{align}\label{7p5}
	\frac{\nu}{2}\|\nabla \chi\|_{0}^2+\left(\frac{\beta}{4}-\frac{4^{\delta}\alpha^2}{4\nu}\right)&\|{u_h}^{\delta}\chi\|_{0}^2+\left(\frac{\beta}{4}-\frac{4^{\delta}\alpha^2}{4\nu}\right)\|W^{\delta}\chi\|_{0}^2\nonumber\\
	+(\beta\gamma-C(\beta,\alpha,\delta))\|\chi\|_{0}^2&
	\leq \nu(\nabla(u-W),\nabla \chi)-\alpha\sum_{i=1}^2\left({W}^{\delta}\frac{\partial W}{\partial x_i}-u^{\delta}\frac{\partial u}{\partial x_i},\chi\right)\nonumber\\
	&\quad+\beta(W(1-{W}^{\delta})({W}^{\delta}-\gamma)-u(1-u^{\delta})(u^{\delta}-\gamma),\chi),
	\end{align}
	where we have introduced the constant $C(\beta,\alpha,\delta)= \beta 2^{2\delta-1}(1+\gamma)^2(\delta+1)^2$. 
	Using an integration by parts, Taylor's formula, H\"older's and Young's inequalities, we can rewrite the first term on 
	the RHS of \eqref{7p5} as 
	\begin{align}\label{7p6}
	-\frac{\alpha}{\delta+1}&\sum_{i=1}^d\left(\frac{\partial}{\partial x_i}({W}^{\delta+1}-u^{\delta+1}),\chi\right)
	=\frac{\alpha}{\delta+1}\sum_{i=1}^d({W}^{\delta+1}-u^{\delta+1},\frac{\partial}{\partial x_i}\chi)\nonumber\\
	&=\alpha\sum_{i=1}^d\left((\theta W+(1-\theta)u)^{\delta}(W-u),\frac{\partial}{\partial x_i}\chi\right)\nonumber\\
	&\leq 2^{\delta-1}\alpha\left(\|{W}^{\delta}(W-u)\|_{0}+\|{u}^{\delta}(W-u)\|_{0}\right)\|\nabla \chi\|_{0}\nonumber\\
	&\leq 2^{\delta-1}\alpha\left(\|{W}^{2\delta}\|_{0}^{1/2}+\|{u}^{2\delta}\|_{0}^{1/2}\right)\|W-u\|_{L^4}\|\nabla \chi\|_{0}.
	\end{align}
	And we can also rewrite the second term on the RHS of \eqref{7p5} as 
	\[
	\beta(1+\gamma)({W}^{\delta+1}-u^{\delta+1},\chi)-2\beta\gamma(W-u,\chi)
	-2\beta({W}^{2\delta+1}-u^{2\delta+1},\chi):=\sum_{i=1}^3J_i,
	\]
	where
	\begin{gather*}
	J_1=\beta(1+\gamma)({W}^{\delta+1}-u^{\delta+1},\chi),\qquad 
	J_2 =-2\beta\gamma(W-u,\chi),\\
	J_3=-2\beta({W}^{2\delta+1}-u^{2\delta+1},\chi). 
	\end{gather*}
	We estimate $J_1$ using Taylor's formula, H\"older's and Young's inequalities as 
	\begin{align*}
	J_1&=
	\beta(1+\gamma)(\delta+1)((\theta W+(1-\theta)u)^{\delta}(W-u),\chi)\\
	&\leq 2^{\delta-1}\beta(1+\gamma)(\delta+1)\left(\|{W}^{\delta}(W-u)\|_{0}+\|{u}^{\delta}(W-u)\|_{0}\right)\|\chi\|_{0}\\
	&\leq 2^{\delta-1}\beta(1+\gamma)(\delta+1)\left(\|{W}^{2\delta}\|_{0}^{1/2}+\|{u}^{2\delta}\|_{0}^{1/2}\right)\|W-u\|_{L^4}\|\chi\|_{0}.
	\end{align*}
	In turn, using Cauchy-Schwarz and Young's inequalities, an estimate for $J_2$ reads 
	\[
	J_2 \leq 2\beta\gamma\|W-u\|_{0}\|\chi\|_{0},
	\]
	while a bound for $J_3$ results from applying 
	Taylor's formula together with H\"older's and Young's inequalities 
	\begin{align}\label{7p11}
	J_3&=-(2\delta+1)\beta((\theta W+(1-\theta)u)^{2\delta}(W-u),\chi)\nonumber\\
	&\leq 2^{2\delta-1}(2\delta+1)\beta \left(\|{W}^{\delta}(W-u)\|_{0}\|{W}^{\delta}\chi\|_{0}+\|{u}^{\delta}(W-u)\|_{0}\|{u}^{\delta}\chi\|_{0}\right)\nonumber\\
	&\leq 2^{2\delta-1}(2\delta+1)\beta \left(\|{W}^{2\delta}\|_{0}+\|{u}^{2\delta}\|_{0}\right)\|W-u\|_{L^4}\|\chi\|_{L^4}.
	\end{align}
	Combining \eqref{7p6}-\eqref{7p11}, substituting the result back into \eqref{7p5}, and then using 
	\eqref{7a1} and \eqref{7p7}, implies the desired result. 
\end{proof}
\subsection{Non-conforming finite element method}
Let $\mathbb{P}_1$  denote the space of polynomials which have degree at most $1$, and let us recall the 
definition of the Crouzeix-Raviart (CR) non-conforming finite element space 
\begin{equation}\label{CRFEM11}
{V}_{h}^{CR} =\left\{v\in L^2(\Omega) :\ \text{ for all }\  K\in \mathcal{T} \;v_{|_K}\in\mathbb{P}_1 \;\mbox{and}\; \int_E [|v|]=0\quad E\in\mathcal{E}\right\}.
\end{equation}
It is useful to introduce the piecewise gradient operator $\nabla_h: H^1(\mathcal{T}_h)\rightarrow L^2(\Omega;\mathbb{R}^2)$ with $(\nabla_h v)|_K = \nabla v|_K,$ for all $K\in \mathcal{T}_h$. 
The discrete weak formulation of (\ref{8.1}) in this context reads: find $u^{CR}_h\in V_h^{CR}$ such that
\begin{align}\label{ncweakform}
A_{NC}(u_h^{CR},\chi)=(f,\chi), \quad \ \text{ for all }\  \chi\in V_h^{CR},
\end{align} 
with 
\begin{gather*}
A_{NC}(v,v)=\nu a_{NC}(v,v)+\alpha b_{NC}(v;v,v)-\beta(C(v),v),\\
a_{NC}(v,v) = (\nabla_h v, \nabla_h v), \quad b_{NC}(v;v,v)= ((v^{\delta},v^{\delta})^T\cdot\nabla_h v,v),
\end{gather*}
and we define the associated discrete energy norm $\vertiii{v}_{NC}:=\sqrt{a_{NC}(v,v)}$.
\begin{lemma}\label{crnclem11}
For any $v\in V_h^{CR}$, we have
\begin{align}\label{317}
A_{NC}(v,v)\ge \bar{C} \vertiii{v}_{NC}^2,
\end{align}
provided $\nu > \max\{{\beta}(1+\gamma^2)C_{\Omega}^{NC},\frac{2\alpha^2}{\beta}\}$.
\end{lemma}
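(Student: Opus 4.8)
The plan is to test the discrete form against $v$ itself and show that, under the stated thresholds on $\nu$, every sign-indefinite contribution can be absorbed into the coercive diffusion term $\nu\vertiii{v}_{NC}^2=\nu\|\nabla_h v\|_0^2$. I would begin by expanding the reaction term: writing $C(v)=-v^{2\delta+1}+(1+\gamma)v^{\delta+1}-\gamma v$ gives
\begin{align*}
-\beta(C(v),v)=\beta\|v\|_{L^{2\delta+2}}^{2\delta+2}-\beta(1+\gamma)\int_{\Omega} v^{\delta+2}\,\d x+\beta\gamma\|v\|_0^2.
\end{align*}
The first and last terms are nonnegative. For the indefinite middle term I would use $v^{\delta+2}\le|v|^{\delta+2}=|v|^{\delta+1}|v|$ and Young's inequality to bound it below by $-\tfrac{\beta}{2}\|v\|_{L^{2\delta+2}}^{2\delta+2}-\tfrac{\beta(1+\gamma)^2}{2}\|v\|_0^2$. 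The crucial algebraic point is that recombining the surviving $+\beta\gamma\|v\|_0^2$ with $-\tfrac{\beta(1+\gamma)^2}{2}\|v\|_0^2$ produces exactly $-\tfrac{\beta}{2}(1+\gamma^2)\|v\|_0^2$, since $2\gamma-(1+\gamma)^2=-(1+\gamma^2)$; this is the origin of the factor $(1+\gamma^2)$ in the hypothesis. The discrete Poincar\'e--Friedrichs inequality on $V_h^{CR}$, $\|v\|_0^2\le C_{\Omega}^{NC}\|\nabla_h v\|_0^2$ (valid because the edge averages of the jumps vanish), then converts this into $-\tfrac{\beta(1+\gamma^2)C_{\Omega}^{NC}}{2}\vertiii{v}_{NC}^2$.

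Next I would estimate the advection form. Since $b_{NC}(v;v,v)=\int_{\Omega} v^{\delta+1}\sum_{i=1}^{2}\partial_{x_i}v\,\d x$, the Cauchy--Schwarz inequality together with the identity $\|v^{\delta}v\|_0^2=\|v\|_{L^{2\delta+2}}^{2\delta+2}$ gives $|b_{NC}(v;v,v)|\le\sqrt{2}\,\|v\|_{L^{2\delta+2}}^{\delta+1}\|\nabla_h v\|_0$, and Young's inequality weighted by $\nu$ yields $\alpha b_{NC}(v;v,v)\ge-\tfrac{\nu}{2}\|\nabla_h v\|_0^2-\tfrac{\alpha^2}{\nu}\|v\|_{L^{2\delta+2}}^{2\delta+2}$. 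Adding all contributions, the $\|v\|_{L^{2\delta+2}}^{2\delta+2}$ terms combine into $\bigl(\tfrac{\beta}{2}-\tfrac{\alpha^2}{\nu}\bigr)\|v\|_{L^{2\delta+2}}^{2\delta+2}$, nonnegative precisely when $\nu\ge\tfrac{2\alpha^2}{\beta}$, while the gradient terms leave $\bigl(\tfrac{\nu}{2}-\tfrac{\beta(1+\gamma^2)C_{\Omega}^{NC}}{2}\bigr)\|\nabla_h v\|_0^2$, positive precisely when $\nu>\beta(1+\gamma^2)C_{\Omega}^{NC}$. Discarding the nonnegative $L^{2\delta+2}$ contribution then gives \eqref{317} with $\bar{C}=\tfrac12\bigl(\nu-\beta(1+\gamma^2)C_{\Omega}^{NC}\bigr)$.

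The main obstacle, and the genuine departure from the continuous analysis, is the advection term. In the continuous (and conforming) setting $b(u,u,u)$ vanishes by integration by parts, since $v^{\delta+1}\sum_i\partial_{x_i}v=\tfrac{1}{\delta+2}\sum_i\partial_{x_i}(v^{\delta+2})$ integrates to zero against the homogeneous boundary data. For $v\in V_h^{CR}$ this cancellation fails: only the edge averages of $v$, and not those of the nonlinear quantity $v^{\delta+2}$, are continuous across interior edges, so $b_{NC}(v;v,v)\ne0$ in general and must be estimated rather than discarded. This is exactly what forces the additional smallness condition $\nu>\tfrac{2\alpha^2}{\beta}$ on the diffusion coefficient, and it explains why solvability of the nonconforming scheme cannot be inherited directly from the continuous problem.
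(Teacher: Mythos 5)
Your proof is correct and follows essentially the same route as the paper: expand $-\beta(C(v),v)$ into its three parts, absorb the indefinite term $\beta(1+\gamma)(v^{\delta+1},v)$ by H\"older and Young (producing exactly the $2\gamma-(1+\gamma)^2=-(1+\gamma^2)$ recombination), bound the non-vanishing trilinear term $b_{NC}(v;v,v)$ by Cauchy--Schwarz and Young, and finish with the discrete Poincar\'e--Friedrichs inequality. The only (immaterial) difference is the Young weighting of the advection term: you absorb it as $\frac{\nu}{2}\|\nabla_h v\|_0^2+\frac{\alpha^2}{\nu}\|v\|_{L^{2\delta+2}}^{2\delta+2}$, while the paper uses $\frac{\beta}{4}\|v\|_{L^{2\delta+2}}^{2\delta+2}+\frac{\alpha^2}{\beta}\|\nabla_h v\|_0^2$; both yield the same thresholds $\nu>\beta(1+\gamma^2)C_{\Omega}^{NC}$ and $\nu>\frac{2\alpha^2}{\beta}$, and your closing observation about why the conforming cancellation $b(u,u,u)=0$ fails in $V_h^{CR}$ is accurate.
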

\begin{proof}
Owing to Young's and Poincar\'{e}-Friedrichs's inequalities, it readily follows that
\begin{align*}
	A_{NC}(v,v)&=\nu\|\nabla_h v\|^2_{0,\mathcal{T}_h}+\beta\gamma\|v\|_{0}^2+\beta\|v\|_{\L^{2\delta+2}}^{2\delta+2}-\beta(1+\gamma)(v^{\delta+1},v)-b_{NC}(v;v,v)\nonumber\\
	&\geq \nu\|\nabla_h v\|_{0,\mathcal{T}_h}^2+\beta\gamma\|v\|_{0}^2+\beta\|v\|_{\L^{2\delta+2}}^{2\delta+2}-\beta(1+\gamma)\|v\|_{\L^{\delta+1}}^{\delta+1}\|v\|_{0}\nonumber\\
	&\qquad-\alpha\|v\|_{L^{\delta+1}}^{\delta+1}\|\nabla_h v\|_{0,\mathcal{T}_h}\nonumber\\
	&\geq {\nu}\|\nabla_h v\|_{0,\mathcal{T}_h}^2+\beta\gamma\|v\|_{0}^2+\frac{\beta}{4}\|v\|_{\L^{2\delta+2}}^{2\delta+2}-\frac{\beta}{2}(1+\gamma)^2\|v\|_{0}^2-\frac{\alpha^2}{\beta}\|\nabla_hv\|_{0,\mathcal{T}_h}^2\nonumber\\
	&\geq\nu\|\nabla_h v\|_{0,\mathcal{T}_h}^2-\frac{\beta}{2}(1+\gamma^2)\|v\|_{0}^2-\frac{\alpha^2}{\beta}\|\nabla_hv\|_{0,\mathcal{T}_h}^2\nonumber\\
	&\geq\left(\frac{\nu}{2}-\frac{\beta}{2}(1+\gamma^2)C_{\Omega}^{NC}+\frac{\nu}{2}-\frac{\alpha^2}{\beta}\right)\|\nabla_h v\|_{0,\mathcal{T}_h}^2,
	\end{align*}
	and the estimate \eqref{317} follows. 
\end{proof}
\begin{theorem}[Existence of a discrete solution]\label{excrfem}
 Let $\|u_h^{CR}\|_0=k_{CR}$ and 
 \begin{align*}
 k_{CR}> \frac{(C_{\Omega}^{CR})}{\nu\sqrt{\nu+\beta\gamma C_{\Omega}^{CR}-\beta(1+\gamma)^2C_{\Omega}^{CR}-\frac{2\alpha^2}{\beta}}}\|f\|_{0}, 
 \end{align*} 
 provided $\nu +\beta \gamma C_{\Omega}^{CR}> \beta(1+\gamma)^2C_{\Omega}^{CR}+\frac{2\alpha^2}{\beta}$. Then, problem (\ref{ncweakform}) admits at least one solution $u_h^{NC}\in V_h^{NC}$.
\end{theorem}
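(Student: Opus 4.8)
The plan is to prove existence of a discrete solution to the nonconforming scheme by mimicking Step 1 of Theorem \ref{thm6.1}: since $V_h^{CR}$ is finite-dimensional, I would set up a map on $V_h^{CR}$ and invoke Brouwer's fixed point theorem via \cite[Lem. 1.4]{Te}, for which the crucial hypothesis is a coercivity/sign condition of the form $[P(v),v]>0$ on a sphere of sufficiently large radius $k_{CR}$.

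Let me think through the structure. Define $P: V_h^{CR}\to V_h^{CR}$ by $(\nabla_h P(v),\nabla_h\chi)=A_{NC}(v,\chi)-(f,\chi)$ for all $\chi\in V_h^{CR}$, using the discrete energy inner product $a_{NC}(\cdot,\cdot)=(\nabla_h\cdot,\nabla_h\cdot)$ as the scalar product on $V_h^{CR}$ (which is an inner product since the Poincar\'e--Friedrichs inequality holds on $V_h^{CR}$). The continuity of $P$ follows just as in Theorem \ref{thm6.1}, using the finite-dimensional norm equivalences so that all the $L^{2(\delta+1)}$ and $L^\infty$ norms appearing are controlled. The key computation is the lower bound for
\[
[P(v),v]=A_{NC}(v,v)-(f,v).
\]

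First I would apply Lemma \ref{crnclem11}, or rather redo its estimate keeping track of the relevant constants, to bound $A_{NC}(v,v)$ from below. Combining the Young and Poincar\'e--Friedrichs steps already carried out in that lemma, one gets a lower bound essentially of the form $A_{NC}(v,v)\ge(\nu+\beta\gamma C_\Omega^{CR}-\beta(1+\gamma)^2C_\Omega^{CR}-\tfrac{2\alpha^2}{\beta})\tfrac{1}{C_\Omega^{CR}}\|v\|_0^2$ (or directly in terms of $\|\nabla_h v\|_{0,\mathcal{T}_h}^2$), while the forcing term is estimated by Cauchy--Schwarz and Poincar\'e--Friedrichs as $|(f,v)|\le C_\Omega^{CR}\|f\|_0\|\nabla_h v\|_{0,\mathcal{T}_h}$ or $|(f,v)|\le\|f\|_0\|v\|_0$. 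The stated condition $\nu+\beta\gamma C_\Omega^{CR}>\beta(1+\gamma)^2C_\Omega^{CR}+\tfrac{2\alpha^2}{\beta}$ is exactly what makes the leading coefficient positive, so that for $\|v\|_0=k_{CR}$ with $k_{CR}$ above the threshold displayed in the statement, the quadratic term dominates the linear forcing term and $[P(v),v]>0$. Then \cite[Lem. 1.4]{Te} furnishes a zero of $P$ in the ball $\{\|v\|_0\le k_{CR}\}$, which is precisely a solution $u_h^{NC}$ of \eqref{ncweakform}.

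The main obstacle I anticipate is purely bookkeeping of constants: reconciling the threshold on $k_{CR}$ in the statement (which mixes $\|v\|_0$-type and $\|\nabla_h v\|_{0,\mathcal{T}_h}$-type bounds through the Poincar\'e--Friedrichs constant $C_\Omega^{CR}$) with the coercivity estimate of Lemma \ref{crnclem11}, and making sure the sign condition is verified with respect to the correct norm $[v]=\|\nabla_h v\|_{0,\mathcal{T}_h}$ used as the induced norm on $V_h^{CR}$. Since all spaces are finite-dimensional, no compactness or passage-to-the-limit argument is needed, so the genuine analytic content is entirely contained in this single coercivity-versus-forcing inequality.
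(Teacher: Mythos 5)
Your proposal is correct and follows essentially the same route as the paper: the authors likewise define a map $P_{CR}:V_h^{CR}\to V_h^{CR}$ by $(P_{CR}(v),\chi)=A_{NC}(v,\chi)-(f,\chi)$, combine the coercivity computation of Lemma \ref{crnclem11} with Young's and Poincar\'e--Friedrichs inequalities to show $(P_{CR}(v),v)>0$ on the sphere $\|v\|_0=k_{CR}$ under the stated parameter condition, and conclude via Brouwer's fixed-point theorem. The only (immaterial) difference is that the paper defines $P_{CR}$ through the $L^2$ inner product rather than your discrete energy inner product, so the sphere condition $\|v\|_0=k_{CR}$ in the statement matches the hypothesis of the Brouwer lemma directly --- which resolves the norm-mismatch you flagged at the end.
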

\begin{proof}
 We introduce the Crouzeix-Raviart operator $P_{CR}:V_h^{CR}\rightarrow V_h^{CR}$ as 
 \[
 (P_{CR}(u_{h}^{CR}),v)=A_{NC}(u_{h}^{CR},v)-(f,v),
 \]
which is well defined and continuous on $V_h^{CR}$. Choosing $v=u_h^{CR}$ and using Lemma \ref{crnclem11}, we have
 \begin{align}\label{eqcrnc12}
 (P_{CR}(u_h^{CR}),&u_h^{CR})\nonumber\\
 &\ge \nu\|\nabla_h v\|_{0,\mathcal{T}_h}^2-\frac{\beta}{2}(1+\gamma^2)\|v\|_{0}^2-\frac{\alpha^2}{\beta}\|\nabla_hv\|_{0,\mathcal{T}_h}^2+\beta\gamma\|v\|_{0}^2\nonumber\\
 &\quad-\frac{C_{\Omega}^{CR}}{2\nu}\|f\|_0^2-\frac{\nu}{2C_{\Omega}^{CR}}\|u^{CR}_h\|_0^2,\nonumber\\
 &\ge\frac{1}{C_{\Omega}^{CR}} \left(\frac{\nu}{2}-\frac{\beta}{2}(1+\gamma^2)C_{\Omega}^{CR}-\frac{\alpha^2}{\beta}+\beta\gamma C_{\Omega}^{CR}\right)\| u_{h}^{CR}\|_{0}^2-\frac{C_{\Omega}^{CR}}{2\nu}\|f\|_0^2.
 \end{align}
 Let $\|u_h^{CR}\|_0=k_{CR}$ and 
 \[
 k_{CR}> \frac{(C_{\Omega}^{CR})}{\nu\sqrt{\nu+\beta\gamma C_{\Omega}^{CR}-\beta(1+\gamma)^2C_{\Omega}^{CR}-\frac{2\alpha^2}{\beta}}}\|f\|_{0}, 
\]
 provided $\nu +\beta \gamma C_{\Omega}^{CR}> \beta(1+\gamma)^2C_{\Omega}^{CR}+\frac{2\alpha^2}{\beta}$. Then the RHS in (\ref{eqcrnc12}) is non-negative. Finally,  
Brouwer's fixed-point theorem implies that $P_{CR}(u_h^{CR})=0$.
\end{proof}

Next we denote by $I_h$ the usual finite element interpolation \cite{john1998}. Then the following estimates hold
\begin{align}\label{ncapprox11}
|v-I_hv|_{m,k}&\le Ch^{2-m}_K\|v\|_{2,K}\quad v\in H^2(K),\\
\|v-(I_hv)\|_{0,E}&\le Ch^{3/2}\|v\|_{2,K}\quad v\in H^2(K)\quad E\in\mathcal{E}(\mathcal{T}_h). \label{ncapprox12}
\end{align}
Regarding the edge projection $P_E:L^2(E)\rightarrow P_0(E)$, where $P_0(E)$ is a constant on $E$, we have 
\begin{align}\label{P0projection}
\|v-P_E v\|_{0,E}\le Ch^{1/2}_K|v|_{1,K},\ \text{ for all }\  v\in H^1(K),\ E\in\mathcal{E}(\mathcal{T}_h).
\end{align}
\begin{lemma}\label{crnclem111}
There holds:
\begin{align*}
\alpha[b_{NC}(v_1,v_1,w)-b_{NC}(v_2,v_2,w)]&\le \frac{\nu}{2}\|\nabla_h w\|_{0,\mathcal{T}_h}^2+\frac{2^{2\delta}C_{\star}\alpha^2}{4\nu}(\|v^{\delta}_1 w\|_{0}^2+\|v^{\delta}_2 w\|_{0}^2),\\
A_{NC}(v_1,w)-A_{NC}(v_2,w)&\ge \frac{\nu}{2}\|\nabla_h w\|_{0,\mathcal{T}_h}^2+(\beta\gamma-C(\beta,\alpha,\delta))\|w\|_{0}^2\nonumber\\
&+\left(\frac{\beta}{4}-\frac{2^{2\delta}C_{\star}\alpha^2}{4\nu}\right)(\|{v}^{\delta}_1w\|_{0}^2+\|v^{\delta}_2w\|_{0}^2),
\end{align*}
where $v_1,v_2\in V_{h}^{NC}$, $w=v_1-v_2$ and $C_{\star}$ is a postive constant.
\end{lemma}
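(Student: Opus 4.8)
The plan is to derive both bounds as the broken, nonconforming counterparts of the continuous uniqueness estimates \eqref{6.49} and \eqref{327}, the only genuinely new ingredient being that the integration by parts in the convective term must now be carried out cell-by-cell rather than globally. Throughout I set $w=v_1-v_2\in V_h^{CR}$ and rely on the pointwise identity $v^{\delta}\partial_{x_i}v=\tfrac{1}{\delta+1}\partial_{x_i}(v^{\delta+1})$, on Taylor's formula $v_1^{\delta+1}-v_2^{\delta+1}=(\delta+1)(\theta v_1+(1-\theta)v_2)^{\delta}w$, and on the convexity bound $|\theta v_1+(1-\theta)v_2|^{\delta}\le 2^{\delta-1}(|v_1|^{\delta}+|v_2|^{\delta})$, valid for $\delta\ge 1$, exactly as in the continuous uniqueness argument.

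For the first inequality I would write $b_{NC}(v_1,v_1,w)-b_{NC}(v_2,v_2,w)=\tfrac{1}{\delta+1}\sum_{K\in\mathcal{T}_h}\sum_{i}\int_K\partial_{x_i}(v_1^{\delta+1}-v_2^{\delta+1})\,w$ and then integrate by parts on each cell $K$. This splits the expression into a volume contribution $-\tfrac{1}{\delta+1}(v_1^{\delta+1}-v_2^{\delta+1},(\nabla_h w)_i)$ and the interfacial contributions $\tfrac{1}{\delta+1}\sum_K\int_{\partial K}(v_1^{\delta+1}-v_2^{\delta+1})\,w\,n_i$ that are absent in the conforming case. The volume term is treated precisely as in \eqref{6.49}: inserting Taylor's formula and the convexity bound and then applying H\"older's and Young's inequalities yields $\tfrac{\nu}{2}\|\nabla_h w\|_{0,\mathcal{T}_h}^2+\tfrac{2^{2\delta}\alpha^2}{4\nu}(\|v_1^{\delta}w\|_0^2+\|v_2^{\delta}w\|_0^2)$. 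Rewriting the interfacial contributions as a sum over the edges $E\in\mathcal{E}_h$ of integrals of the jump $[|(v_1^{\delta+1}-v_2^{\delta+1})w|]$, I would control them using the defining zero-mean property $\int_E[|v|]=0$ of $V_h^{CR}$ (see \eqref{CRFEM11}), the edge-projection estimate \eqref{P0projection}, and trace/scaling inequalities on the shape-regular cells; the resulting bounds are again of the form $\|\nabla_h w\|_{0,\mathcal{T}_h}^2$ and $\|v_j^{\delta}w\|_0^2$, so that together with the volume term they produce the stated coefficient $\tfrac{2^{2\delta}C_{\star}\alpha^2}{4\nu}$ with a single constant $C_{\star}\ge 1$. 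This is the main obstacle: unlike the conforming setting, the products $v_j^{\delta+1}w$ are no longer CR functions and their jumps are not directly mean-free, so one must reduce them to the genuinely mean-free jumps of $v_1,v_2,w$ and absorb the mismatch into $C_{\star}$.

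For the second inequality I would expand $A_{NC}(v_1,w)-A_{NC}(v_2,w)=\nu\|\nabla_h w\|_{0,\mathcal{T}_h}^2+\alpha[b_{NC}(v_1,v_1,w)-b_{NC}(v_2,v_2,w)]-\beta(C(v_1)-C(v_2),w)$ with $w=v_1-v_2$, keeping the diffusion term intact. Since the estimate of the first inequality in fact bounds the absolute value of the convective difference, it delivers the lower bound $\alpha[b_{NC}(v_1,v_1,w)-b_{NC}(v_2,v_2,w)]\ge -\tfrac{\nu}{2}\|\nabla_h w\|_{0,\mathcal{T}_h}^2-\tfrac{2^{2\delta}C_{\star}\alpha^2}{4\nu}(\|v_1^{\delta}w\|_0^2+\|v_2^{\delta}w\|_0^2)$. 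For the reaction difference I observe that the algebraic manipulations leading to \eqref{3.25}, \eqref{3.26} and hence \eqref{327} involve no spatial derivatives and only $L^p(\Omega)$ norms of $v_1,v_2,w$, so they transfer verbatim to the broken setting and give $-\beta(C(v_1)-C(v_2),w)\ge \beta\gamma\|w\|_0^2+\tfrac{\beta}{4}(\|v_1^{\delta}w\|_0^2+\|v_2^{\delta}w\|_0^2)-C(\beta,\alpha,\delta)\|w\|_0^2$, with $C(\beta,\alpha,\delta)=\beta 2^{2\delta-1}(1+\gamma)^2(\delta+1)^2$. Collecting the diffusion, convective and reaction contributions then reproduces exactly the claimed lower bound, with $\beta\gamma-C(\beta,\alpha,\delta)$ multiplying $\|w\|_0^2$ and $\tfrac{\beta}{4}-\tfrac{2^{2\delta}C_{\star}\alpha^2}{4\nu}$ multiplying $\|v_1^{\delta}w\|_0^2+\|v_2^{\delta}w\|_0^2$.
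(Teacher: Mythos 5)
Your treatment of the second inequality coincides with the paper's: expand $A_{NC}(v_1,w)-A_{NC}(v_2,w)$ into diffusion, convection and reaction parts, feed in the first estimate, and reuse the purely algebraic reaction bound \eqref{327}, which indeed transfers verbatim since it involves no derivatives. The divergence is in the first inequality, and there your route contains a genuine gap. The paper does \emph{not} integrate by parts: it keeps the broken volume integral $\frac{\alpha}{\delta+1}\sum_{K}\sum_i\int_K \partial_{x_i}(v_1^{\delta+1}-v_2^{\delta+1})\,w\,dx$ intact and estimates it directly via Cauchy--Schwarz, \emph{inverse} inequalities, Taylor's formula, H\"older and Young; the constant $C_{\star}$ in the lemma is precisely the inverse-estimate constant, and no interface terms ever appear. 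By instead performing cell-wise integration by parts you create edge contributions $\sum_K\int_{\partial K}(v_1^{\delta+1}-v_2^{\delta+1})\,w\,n_i$ that the paper's argument never has to face, and your control of them is only gestured at, not carried out.

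Concretely, on an interior edge the jump of the product splits as $[\![\phi w]\!]=\{\phi\}[\![w]\!]+[\![\phi]\!]\{w\}$ with $\phi=v_1^{\delta+1}-v_2^{\delta+1}$, and only the jumps of the CR functions $v_1,v_2,w$ themselves are mean-free; $[\![\phi]\!]$ is not, since $\phi$ is a nonlinear (degree $\delta+1$) expression. Reducing $[\![\phi]\!]$ to $[\![v_1]\!],[\![v_2]\!]$ through Taylor's formula and then invoking \eqref{P0projection} and trace/scaling inequalities produces bounds of the form $C\,h\,|v_j|_{1,\omega_E}$ multiplied by $L^{\infty}$-type powers of $v_1,v_2$ and by $|w|_{1,\omega_E}$ --- quantities that are \emph{not} of the form $\|v_j^{\delta}w\|_{0}^2$ or $\|\nabla_h w\|_{0,\mathcal{T}_h}^2$, and whose constants depend on $v_1,v_2$ (and potentially on $h$) unless one first establishes discrete a priori or $L^{\infty}$ bounds, which you do not do. So your claim that the interfacial terms ``are again of the form $\|\nabla_h w\|_{0,\mathcal{T}_h}^2$ and $\|v_j^{\delta}w\|_0^2$'' and can be absorbed into a single fixed $C_{\star}$ is exactly the hard step of your route, and it is asserted rather than proved; as stated it is doubtful. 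To repair the argument in the spirit of the paper, drop the integration by parts, apply Taylor's formula to $v_1^{\delta+1}-v_2^{\delta+1}$ inside each cell, use the convexity bound $|\theta v_1+(1-\theta)v_2|^{\delta}\le 2^{\delta-1}(|v_1|^{\delta}+|v_2|^{\delta})$, and let the inverse inequality on the piecewise-polynomial factors supply $C_{\star}$ before concluding with H\"older and Young, in parallel with the conforming estimate \eqref{6.49}.
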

\begin{proof}
To prove the first estimate, we use the definition of $b_{NC}(\cdot, \cdot)$. Then
\begin{align*}
\alpha[b_{NC}(v_1,v_1,w)-b_{NC}(v_2,v_2,w)]&=\alpha \sum_{K\in\mathcal{T}_h}\sum_{i=1}^{d}\int_{K}\left(v_1^{\delta}\frac{\partial v_1}{\partial x_i} -v_2^{\delta}\frac{\partial v_2}{\partial x_i}\right)w dx\\
&=\frac{\alpha}{\delta+1}\sum_{K\in\mathcal{T}_h}\sum_{i=1}^{d}\int_{K}\left(\frac{\partial (v_1^{\delta+1}- v_2^{\delta+1})}{\partial x_i} \right)w dx. 
\end{align*}
Using Cauchy-Schwarz and inverse inequalities, Taylor's formula,
H\"{o}der's and Young's inequalities, implies the first stated result. 
To prove the second inequality, we write 
\begin{align*}
A_{NC}(v_1,w)-A_{NC}(v_2,w)&= \nu a_{NC}(v_1-v_2,w)+\alpha[b_{NC}(v_1,v_1,w)-b_{NC}(v_2,v_2,w)]\\
&\quad-\beta[(C(v_1),w)-(C(v_2),w)].
\end{align*}
Applying the first estimate and \eqref{327} leads to the second estimate.
\end{proof}

\begin{theorem}\label{ncthm11}
	Let $V_h^{CR}$ be the non-conforming space defined in (\ref{CRFEM11}). Assume that \eqref{234} holds true and  that $u\in D(A)=\H_0^1(\Omega)\cap \H^2(\Omega)$ satisfies \eqref{8p2}. Then the error incurred by the NCFEM approximation satisfies 
	\[
	\vertiii{u^{CR}_h-u}_{NC}\leq Ch,
	\]
	where the constant $C$ is independent of $h$ and $C$ depends on  	$\nu,\alpha,\beta,\gamma,\delta$, $\|f\|_{0}$, etc.
\end{theorem}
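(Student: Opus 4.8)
The plan is to mirror the conforming energy estimate of Theorem \ref{thm7.1}, the new ingredient being a consistency error stemming from the non-conformity of $V_h^{CR}$. First I would split the error through the Crouzeix--Raviart interpolant $I_h u$, writing $u_h^{CR}-u=\xi+\eta$ with $\xi:=u_h^{CR}-I_hu\in V_h^{CR}$ and $\eta:=I_hu-u$. The interpolation part is controlled directly by \eqref{ncapprox11} with $m=1$, which gives $\vertiii{\eta}_{NC}=\|\nabla_h(I_hu-u)\|_{0,\mathcal{T}_h}\le Ch\|u\|_{2}$; by the triangle inequality it then suffices to bound $\vertiii{\xi}_{NC}$.

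Next I would exploit the discrete monotonicity. Applying the second estimate of Lemma \ref{crnclem111} with $v_1=u_h^{CR}$, $v_2=I_hu$ and $w=\xi$, and absorbing the $\|\xi\|_0^2$ contribution by the discrete Poincar\'e--Friedrichs inequality, yields under \eqref{234} a coercivity bound $A_{NC}(u_h^{CR},\xi)-A_{NC}(I_hu,\xi)\ge\bar C\,\vertiii{\xi}_{NC}^2$. Using the discrete equation \eqref{ncweakform}, $A_{NC}(u_h^{CR},\xi)=(f,\xi)$, I would then test the continuous identity $\nu Au+\alpha B(u)-\beta C(u)=f$ (valid in $L^2$ since $u\in D(A)$) against the broken function $\xi$ and integrate the Laplacian by parts element by element. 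Because the advection and reaction terms already appear in the form matching $b_{NC}(u,u,\xi)$ and $(C(u),\xi)$, they produce no interelement contributions, so the only discrepancy is the consistency functional $\mathcal{E}_h(\xi):=-\nu\sum_{E\in\mathcal{E}_h}\int_E\partial_n u\,[|\xi|]\,\mathrm{d}s=(f,\xi)-A_{NC}(u,\xi)$. This leads to $\bar C\,\vertiii{\xi}_{NC}^2\le\mathcal{E}_h(\xi)+[A_{NC}(u,\xi)-A_{NC}(I_hu,\xi)]$.

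The main obstacle is the consistency term $\mathcal{E}_h(\xi)$, which has no analogue in the conforming proof. To control it I would use the defining property $\int_E[|\xi|]=0$ of the Crouzeix--Raviart space to subtract the edge average $P_E(\partial_n u)$ on each edge, writing $\int_E\partial_n u\,[|\xi|]=\int_E(\partial_n u-P_E\partial_n u)\,[|\xi|]$, and then combine the projection estimate \eqref{P0projection} applied to $\partial_n u$ with the same estimate applied to $[|\xi|]$ (whose edge mean vanishes), so that a Cauchy--Schwarz step over the edges gives $|\mathcal{E}_h(\xi)|\le Ch\|u\|_2\,\vertiii{\xi}_{NC}$. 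The remaining term $A_{NC}(u,\xi)-A_{NC}(I_hu,\xi)$ is treated exactly as in \eqref{7p6}--\eqref{7p11}: its diffusion part is bounded through $\nu(\nabla_h\eta,\nabla_h\xi)$, while the nonlinear advection and reaction differences are estimated via Taylor's formula together with H\"older's and Young's inequalities, supplemented by the trace/interpolation bounds \eqref{ncapprox11}--\eqref{ncapprox12}, all of order $h$. Collecting these estimates yields $\bar C\,\vertiii{\xi}_{NC}^2\le Ch\,\vertiii{\xi}_{NC}$, hence $\vertiii{\xi}_{NC}\le Ch$, and the triangle inequality with the interpolation bound on $\eta$ completes the proof.
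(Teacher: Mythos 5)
Your proposal is correct and follows essentially the same route as the paper: a triangle-inequality split through the interpolant, the monotonicity estimate of Lemma \ref{crnclem111} for coercivity under \eqref{234}, the identity $A_{NC}(u,\chi)=(f,\chi)+\sum_{K}\int_{\partial K}\nu\,\partial u/\partial n_K\,\chi$ for the consistency error, its control via the mean-zero jump property of $V_h^{CR}$ together with the edge projection estimate \eqref{P0projection}, and the treatment of the nonlinear differences as in \eqref{7p6}--\eqref{7p11}. The only cosmetic difference is that you name the comparison function explicitly as $I_hu$ where the paper writes a generic $W$.
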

\begin{proof} Similarly as before, we split the error and use triangle inequality to write
\[
\vertiii{u_h^{CR}-u}_{NC}\le \vertiii{ u_h^{NC}-W}_{NC}+\vertiii{ W-u}_{NC}.
\]
From \eqref{ncapprox11},  the following estimate is valid for the second term on the RHS 
\[
\vertiii{ W-u}_{NC}\le Ch.
\]
Using (\ref{ncweakform}), we have
\[
A_{NC}(u_h^{CR},\chi)=(f,\chi), \ \text{ for all }\ \chi\in V_h^{CR}.
\]
If $u\in D(A)=\H_0^1(\Omega)\cap \H^2(\Omega)$ satisfies \eqref{8p2}, then it readily 
follows that
\[
A_{NC}(u,\chi)=(f,\chi)+\sum_{K\in\mathcal{T}}\int_{K}\nu\frac{\partial u}{\partial n_K} \chi, \ \text{ for all }\ \chi\in V_h^{CR}.
\]
We can then use Lemma \eqref{crnclem111}, which leads to 
\begin{align*}
 &\frac{\nu}{2}\|\nabla_h \chi\|_{0,\mathcal{T}_h}^2+(\beta\gamma-C(\beta,\alpha,\delta))\|\chi\|_{0}^2+\left(\frac{\beta}{4}-\frac{2^{2\delta}C_{\star}\alpha^2}{4\nu}\right)(\|{u}^{CR}_h\chi\|_{0}^2+\|W^{\delta}\chi\|_{0}^2)\\
 &\leq A_{NC}(u,\chi)-A_{NC}(W,\chi)-\sum_{K\in\mathcal{T}}\int_{K}\nu\frac{\partial u}{\partial n_K} \chi.
\end{align*}
To estimate the consistency error, it suffices to exploit the CR approximation 
\[
\sum_{K\in\mathcal{T}}\int_{\partial K}\nu\frac{\partial u}{\partial n_K} \chi=-\sum_{E\in\mathcal{E}}\int_{E}\nu\frac{\partial u}{\partial n_E} [\chi] 
=-\sum_{E\in\mathcal{E}}\int_{E}\nu\left(\frac{\partial u}{\partial n_E} -P\left(\frac{\partial u}{\partial n_E}\right)\right)[\chi].
\]
Consequently, we can invoke estimate (\ref{P0projection}), which yields 
\[
\left|\sum_{K\in\mathcal{T}}\int_{\partial K}\nu\frac{\partial u}{\partial n_K} \chi\right| \le C \left(\sum_{K\in\mathcal{T}}\nu h_K^2\|u\|_{2,K}^2\right)^{1/2}\vertiii{ \chi}_{NC},
\]
and the remainder of the proof follow similarly to that of Theorem \ref{thm7.1}.
\end{proof}

\subsection{Discontinuous Galerkin method}
In addition to the mesh notation used so far, we also require the 
following preliminaries. 
 Let $E=K_+\cap K_-\in\mathcal{E}^i_h$ be the common edge that is shared by the 
two mesh cells $K_\pm$. We use the symbol $w_{\pm}$ to denote 
 the traces of functions $w\in C^0(\mathcal{T}_h)$ on $E$ from $K_\pm$,
 respectively. In addition, we denote 
the sum (which in turn translates into the jump operator) over an edge as 
\begin{align*}
[\![w]\!]=w_+ + w_-,
\end{align*}
and if ${w}\in C^1(\mathcal{T}_h)$ we also define 
\begin{align*}
[\![\partial{w}/\partial\bm{n}]\!]=\nabla({w}_+ -{w}_-)\bm{n}_+, 
\quad\text{and}\quad[\![{w}\otimes{n}]\!]=({w}_+ - {w}_-)\otimes\bm{n}_+,
\end{align*}
where $\bm{n}_\pm$ denote the unit outward normal vectors to $K_\pm$, respectively. In case of 
boundary edges $E=K_+\cap\partial\Omega$, we take $[\![{v}]\!]={w}_+$.  The exterior trace of $u$ taken over the edge under consideration is denoted by $u^e$ and we chose $u^e =0$ for boundary edges.
We recall the definition of  
the local gradient 
$\nabla_h$ 
satisfying $(\nabla_h{w})|_K = \nabla({w}|_K)$ on each $K\in\mathcal{T}_h$.
We will use the discrete subspace of $L^2(\Omega)$ 
\begin{align}\label{dgsubspace1}
{V}_h^{DG}=\{{v}\in L^2(\Omega): \text{ for all }\  K\in \mathcal{T}_h : {v}|_K \in \mathcal{P}_1(K)\}.
\end{align}
where $\mathcal{P}_1(K)$ is the space of polynomials on $K$ having partial degree $1$.

The discrete weak formulation of (\ref{8.1}) reads now: find $u^{DG}_h\in V_h^{DG}$ such that
\begin{equation}\label{dgweakform}
A_{DG}(u_h^{DG},\chi)=(f,\chi), \ \text{ for all }\ \chi\in V_h^{DG},
\end{equation} 
where, for ${u},{v}\in {V}^{DG}_h$,  the bilinear form 
\begin{equation}\label{ADG}
A_{DG}(v,v)=\nu a_{DG}(v,v)+\alpha b_{DG}(\bm{v},v,v)-\beta(C(v),v),\end{equation}
is defined with the following contributions
\begin{gather*}
a_{DG}({u},{v})=(\nabla_h {u}, \nabla_h {v})+a^{i}_{h}({u},{v})+a^{\partial}_{h}({u},{v}),\\
a^{i}_{h}({u},{v})=a^{i}_p({u},{v})-a^{i}_c({u},{v})-a^{i}_c({v},{u}),\quad 
a^{\partial}_{h}({u},{v})=a^{\partial}_p({u},{v})-a^{\partial}_c({u},{v})-a^{\partial}_c({v},{u}),\\
a^{i}_c({u},{v})=\frac{1}{2}\sum_{E\in\mathcal{E}^i_h}\int_E [\![\nabla_h {u}]\!]\!\cdot\![\![{v}\otimes \bm{n}]\!]d{s},\quad 
 a^{i}_p({u},{v})=\sum_{E\in\mathcal{E}^i_h}\int_E\gamma_h [\![{u}\otimes \bm{n}]\!]\!\cdot\![\![{v}\otimes \bm{n}]\!]d{s},\\
a^{\partial}_c({u},{v})= \sum_{E\in\mathcal{E}^{\partial}_h}\int_E \nabla {u}\!\cdot\!({v}\otimes \bm{n})d{s},
\quad a^{\partial}_p({u},{v})=2\sum_{E\in\mathcal{E}^{\partial}_h}\int_E\gamma_h ({u}\otimes \bm{n})\!\cdot\!({v}\otimes \bm{n})d{s},\\
b_{DG}(\bm{w};u,v)= \!\sum_{K\in \mathcal{T}_h} \!\!\int_{K} \!\!\bm{w}\cdot \nabla u {v} dx 
+\!\!\!\sum_{K\in \mathcal{T}_h} \frac{1}{2}\int_{\partial K} \left[\bm{w}\cdot\bm{n}_K (u^e \! -\! u)\!-|\bm{w}\cdot \bm{n}_K|(u^e\!-\!u)\right]{v} ds,
\end{gather*}
with $\bm{w}=(w,w)^T$ and $\gamma_h=\frac{\gamma}{h_E}$, where $h_E$ is the length of the edge $E$ and $\gamma$ is a penalty parameter chosen sufficiently large to guarantee the stability of the formulation (see, e.g.,  \cite{ADN}). 

It is also convenient to rewrite $b_{DG}(\cdot;\cdot,\cdot)$, after integration by parts, as follows
\begin{align*}
b_{DG}(\bm{w};u,v)&= \sum_{K\in \mathcal{T}_h} \int_{K} (-u \bm{w}\cdot \nabla {v} -\nabla\cdot \bm{w} u {v})dx\\
 &\quad+\sum_{K\in \mathcal{T}_h} \int_{\partial K} \left[\frac{1}{2}\bm{w}\cdot\bm{n}_K [\![u]\!]-\frac{1}{2}|\bm{w}\cdot \bm{n}_K|(u^e-u)\right]{v} ds.
\end{align*}

For the subsequent error analysis, we adopt the following discrete norm 
\[
\vertiii{ v}^2:= \sum_{K\in\mathcal{T}_h}\|\nabla_h v\|_{0,K}^2+\sum_{E\in\mathcal{E}(\mathcal{T}_h)}\|[\![v]\!]\|_{0,E}^2.\]

 \begin{lemma}\label{dglem11}
Coercivity of $a_{DG}$ and continuity of $b_{DG}$ hold in the following sense 
 \[
 a_{DG}(v,v) \ge \alpha_a\vertiii{ v}^2, \quad 
 \alpha b_{DG}(\bm{v};v,v)\le  \frac{\beta}{4}\|v\|_{L^{2\delta+2}}^{2\delta+2}+\frac{2\alpha^2}{\beta}\vertiii{v}^2, \qquad \forall 
 v\in V_{h}^{DG}.\]
 \end{lemma}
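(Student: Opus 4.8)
The plan is to prove the two assertions separately, as the first concerns only the linear symmetric interior-penalty form $a_{DG}$, while the second is a continuity-type bound for the nonlinear upwind convection form $b_{DG}$.

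For the coercivity of $a_{DG}$, I would set $u=v$ and expand, obtaining $a_{DG}(v,v)=\|\nabla_h v\|_{0}^2+a_p^i(v,v)+a_p^\partial(v,v)-2a_c^i(v,v)-2a_c^\partial(v,v)$. The penalty terms are nonnegative, since $a_p^i(v,v)=\sum_{E\in\mathcal{E}_h^i}\gamma_h\|[\![v\otimes\bm{n}]\!]\|_{0,E}^2$ and $a_p^\partial(v,v)=2\sum_{E\in\mathcal{E}_h^\partial}\gamma_h\|v\otimes\bm{n}\|_{0,E}^2$, and, because $\bm{n}$ is a unit vector, they reduce to $\gamma_h=\gamma/h_E$ times the squared jump seminorm. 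The essential step is to control the symmetrization terms: on each edge I would apply Cauchy--Schwarz together with the discrete inverse-trace inequality $\|[\![\nabla_h v]\!]\|_{0,E}^2\le C_{tr}h_E^{-1}\|\nabla_h v\|_{0,K_+\cup K_-}^2$, valid for piecewise polynomials, and then Young's inequality with a free parameter $\eps$. This bounds $2|a_c^i(v,v)|+2|a_c^\partial(v,v)|$ by $\eps\,\|\nabla_h v\|_0^2$ plus a term of the form $\frac{C_{tr}}{\eps}\sum_{E}h_E^{-1}\|[\![v]\!]\|_{0,E}^2$. Taking $\eps$ small and the penalty parameter $\gamma$ large enough renders the coefficients of both $\|\nabla_h v\|_0^2$ and of the (weighted) jump terms strictly positive; since $\gamma_h$ is bounded below, we conclude $a_{DG}(v,v)\ge\alpha_a\vertiii{v}^2$ with $\vertiii{\cdot}$ as defined.

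For the continuity of $b_{DG}$, I would use the primitive representation and split $b_{DG}(\bm{v};v,v)$ into the element volume integral $\sum_{K}\int_K v^{\delta}\big(\sum_i\partial_{x_i}v\big)v\,dx$ and the interelement upwind-flux contribution. For the volume part, Cauchy--Schwarz (using $|\sum_i\partial_{x_i}v|\le\sqrt{d}\,|\nabla_h v|$ and the identity $\|v^{\delta}v\|_0^2=\|v\|_{L^{2\delta+2}}^{2\delta+2}$) gives $\alpha\sqrt d\,\|v^{\delta}v\|_0\|\nabla_h v\|_0$, which Young's inequality splits as $\frac{\beta}{4}\|v\|_{L^{2\delta+2}}^{2\delta+2}$ plus a multiple of $\frac{\alpha^2}{\beta}\|\nabla_h v\|_0^2$. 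For the flux part, I would rewrite the factors $\bm{v}\cdot\bm{n}_K$ and $|\bm{v}\cdot\bm{n}_K|$ acting on $(v^e-v)$ in terms of the edge jumps $[\![v]\!]$, bound the nonlinear coefficient $v^{\delta}$ on each edge by trace and inverse estimates, and apply Cauchy--Schwarz and Young's inequality once more to obtain a contribution controlled by $\frac{\alpha^2}{\beta}\sum_{E}\|[\![v]\!]\|_{0,E}^2$. Summing the two bounds and recalling $\vertiii{v}^2=\sum_K\|\nabla_h v\|_{0,K}^2+\sum_E\|[\![v]\!]\|_{0,E}^2$ yields the asserted estimate.

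The principal obstacle is the convection bound rather than the standard coercivity argument. One must keep the nonlinear factor $v^{\delta}$ under control on element boundaries while extracting precisely the $L^{2\delta+2}$-power on one side and the full energy norm on the other, so that the constant in front of the jump terms collapses to exactly $\frac{2\alpha^2}{\beta}$. Care is needed in the bookkeeping of the upwind flux---separating its central (jump) part from the dissipative $|\bm{v}\cdot\bm{n}_K|$ part---and in verifying that no spurious positive volume term survives, so that the final constants match the statement.
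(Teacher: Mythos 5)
Your proposal is correct and follows essentially the same route as the paper: the paper disposes of the coercivity of $a_{DG}$ by citing \cite{ADN} (your penalty-plus-inverse-trace argument with Young's inequality and $\gamma$ large is precisely the standard proof behind that citation), and it proves the $b_{DG}$ bound exactly as you do, via Cauchy--Schwarz, inverse trace and Young's inequalities, extracting $\frac{\beta}{4}\|v\|_{L^{2\delta+2}}^{2\delta+2}$ from the $\|v^{\delta}v\|_0=\|v\|_{L^{2\delta+2}}^{\delta+1}$ factor and absorbing gradient and jump contributions into $\frac{2\alpha^2}{\beta}\vertiii{v}^2$. The bookkeeping concern you raise about the nonlinear edge weight $v^{\delta}$ is real, but the paper's own proof is no more detailed on this point than yours.
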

 \begin{proof}
 The first estimate follows from \cite{ADN}.  
 Using Cauchy-Schwarz, inverse trace and Young's inequalities in $b_{DG}$, implies the second stated result. 
 \end{proof}
 
\begin{lemma}
For any $v\in V_h^{DG}$, the bilinear form $A_{DG}$ defined in \eqref{ADG} satisfies
\[
A_{DG}(v,v)\ge \bar{C} \vertiii{v}^2.
\]
\end{lemma}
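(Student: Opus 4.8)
The plan is to follow the same three-term splitting used for the non-conforming form in Lemma~\ref{crnclem11}, namely to treat the diffusion, advection and reaction parts of $A_{DG}$ separately and then absorb the indefinite contributions by means of a discrete Poincar\'e--Friedrichs inequality. First I would make the reaction term explicit. Since $C(v)=-v^{2\delta+1}+(1+\gamma)v^{\delta+1}-\gamma v$, one has
\[
-\beta(C(v),v)=\beta\|v\|_{L^{2\delta+2}}^{2\delta+2}+\beta\gamma\|v\|_{0}^2-\beta(1+\gamma)(v^{\delta+1},v),
\]
so that, recalling \eqref{ADG},
\[
A_{DG}(v,v)=\nu a_{DG}(v,v)+\alpha b_{DG}(\bm v;v,v)+\beta\|v\|_{L^{2\delta+2}}^{2\delta+2}+\beta\gamma\|v\|_{0}^2-\beta(1+\gamma)(v^{\delta+1},v).
\]

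Next I would invoke Lemma~\ref{dglem11}. Its coercivity estimate gives directly $\nu a_{DG}(v,v)\ge\nu\alpha_a\vertiii{v}^2$, while its continuity estimate for $b_{DG}$ --- whose proof in fact controls the \emph{absolute value} of the convective form through Cauchy--Schwarz, inverse-trace and Young inequalities --- furnishes the lower bound $\alpha b_{DG}(\bm v;v,v)\ge-\frac{\beta}{4}\|v\|_{L^{2\delta+2}}^{2\delta+2}-\frac{2\alpha^2}{\beta}\vertiii{v}^2$. I would then dispose of the indefinite reaction cross-term with H\"older's and Young's inequalities,
\[
\beta(1+\gamma)(v^{\delta+1},v)\le\beta(1+\gamma)\|v\|_{L^{2\delta+2}}^{\delta+1}\|v\|_{0}\le\frac{\beta}{2}\|v\|_{L^{2\delta+2}}^{2\delta+2}+\frac{\beta}{2}(1+\gamma)^2\|v\|_{0}^2.
\]
Inserting these three bounds, the $\|v\|_{L^{2\delta+2}}^{2\delta+2}$ contributions collapse to the nonnegative remainder $\frac{\beta}{4}\|v\|_{L^{2\delta+2}}^{2\delta+2}\ge0$, which may be discarded, leaving
\[
A_{DG}(v,v)\ge\Big(\nu\alpha_a-\frac{2\alpha^2}{\beta}\Big)\vertiii{v}^2+\Big(\beta\gamma-\frac{\beta}{2}(1+\gamma)^2\Big)\|v\|_{0}^2,
\]
in which the $L^2$ coefficient $-\frac{\beta}{2}(1+\gamma^2)$ is negative.

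The final step is to absorb this negative term using a discrete (broken) Poincar\'e--Friedrichs inequality $\|v\|_{0}^2\le C_{\Omega}^{DG}\vertiii{v}^2$ valid on $V_h^{DG}$ (which holds in the DG norm precisely because of the jump contributions in $\vertiii{\cdot}$; cf.\ \cite{ADN}). This yields
\[
A_{DG}(v,v)\ge\Big(\nu\alpha_a-\frac{2\alpha^2}{\beta}-\frac{\beta}{2}(1+\gamma^2)C_{\Omega}^{DG}\Big)\vertiii{v}^2=:\bar C\,\vertiii{v}^2,
\]
with $\bar C>0$ guaranteed whenever $\nu\alpha_a>\frac{2\alpha^2}{\beta}+\frac{\beta}{2}(1+\gamma^2)C_{\Omega}^{DG}$, a largeness condition on $\nu$ entirely analogous to the one appearing in Lemma~\ref{crnclem11}.

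I expect the main obstacle to be the advection term. Unlike the reaction part, $\alpha b_{DG}(\bm v;v,v)$ is genuinely sign-indefinite, and because the transporting field $\bm v=(v,v)^T$ is itself the unknown, one cannot appeal to the skew-symmetry identity $b(u;u,u)=0$ available in the incompressible Navier--Stokes setting. The required lower bound must instead be extracted from the two-sided form of the continuity estimate in Lemma~\ref{dglem11}, and the crux of the argument is to verify that the combined loss $\big(\frac{2\alpha^2}{\beta}+\frac{\beta}{2}(1+\gamma^2)C_{\Omega}^{DG}\big)\vertiii{v}^2$ remains strictly below $\nu\alpha_a\vertiii{v}^2$; this inequality is exactly what pins down the admissible parameter regime and the value of $\bar C$.
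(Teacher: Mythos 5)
Your proposal is correct and follows essentially the same route as the paper's proof: expand $-\beta(C(v),v)$, invoke the coercivity of $a_{DG}$ and the (two-sided) bound on $\alpha b_{DG}(\bm v;v,v)$ from Lemma~\ref{dglem11}, apply Young's inequality to $\beta(1+\gamma)(v^{\delta+1},v)$, drop the leftover $\frac{\beta}{4}\|v\|_{L^{2\delta+2}}^{2\delta+2}$, and absorb the residual $-\frac{\beta}{2}(1+\gamma^2)\|v\|_{0}^2$ via a discrete Poincar\'e--Friedrichs inequality, arriving at the same constant $\bar C$ and the same largeness condition on $\nu$. Your remark that the advective term must be controlled through the absolute-value form of the continuity estimate (since no skew-symmetry cancellation is available) correctly identifies the one sign subtlety that the paper's write-up glosses over.
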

\begin{proof}Owing to Young's inequality and Lemma \ref{dglem11}, we have
\begin{align*}
	A_{DG}(v,v)&\ge \alpha_a\nu \vertiii{ v} ^2+\beta\gamma\|v\|_{0}^2+\beta\|v\|_{\L^{2\delta+2}}^{2\delta+2}-\beta(1+\gamma)(v^{\delta+1},v)-\alpha b_{DG}(\bm{v};v,v)\nonumber\\
	&\geq \alpha_a{\nu}\vertiii{ v} ^2+\beta\gamma\|v\|_{0}^2+\frac{\beta}{4}\|v\|_{\L^{2\delta+2}}^{2\delta+2}-\frac{\beta}{2}(1+\gamma)^2\|v\|_{0}^2-\frac{2\alpha^2}{\beta}\vertiii{ v}^2\nonumber\\
	&\geq \alpha_a\nu\vertiii{ v}^2-\frac{\beta}{2}(1+\gamma^2)\|v\|_{0}^2-\frac{2\alpha^2}{\beta}\vertiii{ v}^2\nonumber\\
	&\geq\left(\frac{\alpha_a\nu}{2}-\frac{\beta}{2}(1+\gamma^2)C_{\Omega}+\frac{\alpha_a\nu}{2}-\frac{2\alpha^2}{\beta}\right)\vertiii{ v}^2.
	\end{align*}
\end{proof}
\begin{theorem}[Existence of a discrete solution]
 Let $\|u^{DG}_h\|_0=k_{DG}$ and 
 \begin{align*}
 k_{DG}> \frac{(C_{\Omega}^{DG})}{\nu\sqrt{\nu+\beta\gamma C_{\Omega}^{DG}-\beta(1+\gamma)^2C_{\Omega}^{DG}-\frac{2\alpha^2}{\beta}}}\|f\|_{0}, 
 \end{align*}
 provided $\nu +\beta \gamma C_{\Omega}^{DG}> \beta(1+\gamma)^2C_{\Omega}^{DG}+\frac{2\alpha^2}{\beta}$. Then equation (\ref{dgweakform}) admits at least one solution $u_h^{DG}\in V_h^{DG}$.
\end{theorem}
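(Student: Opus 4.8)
The plan is to follow verbatim the argument of Theorem~\ref{excrfem}, replacing the Crouzeix--Raviart objects by their discontinuous Galerkin counterparts. First I would introduce the map $P_{DG}\colon V_h^{DG}\to V_h^{DG}$ defined through the Riesz representation
\[
(P_{DG}(u_h^{DG}),v)=A_{DG}(u_h^{DG},v)-(f,v),\qquad \text{for all } v\in V_h^{DG},
\]
where $A_{DG}(\cdot,\cdot)$ denotes the semilinear form obtained by allowing distinct arguments in \eqref{ADG}, exactly as was done for $A_{NC}$ in the proof of Theorem~\ref{excrfem}. Since $V_h^{DG}$ is finite dimensional, $A_{DG}(\cdot,v)$ depends polynomially on its first slot, and $(f,\cdot)$ is linear, the map $P_{DG}$ is well defined and continuous. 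The verification of the Brouwer hypothesis via \cite[Lem.~1.4]{Te} then reduces to exhibiting a single sign condition for $(P_{DG}(v),v)$ on a sphere of radius $k_{DG}$ in $(V_h^{DG},\|\cdot\|_0)$.

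The central step is to test with $v=u_h^{DG}$ and insert the coercivity estimate for $A_{DG}$ established in the preceding lemma. I would use its intermediate form
\[
A_{DG}(v,v)\ge \alpha_a\nu\vertiii{v}^2+\beta\gamma\|v\|_0^2-\frac{\beta}{2}(1+\gamma^2)\|v\|_0^2-\frac{2\alpha^2}{\beta}\vertiii{v}^2,
\]
which already absorbs every nonlinear advection and reaction contribution through Lemma~\ref{dglem11} and estimate~\eqref{327}. The linear datum term is controlled by Cauchy--Schwarz and Young's inequality as $|(f,u_h^{DG})|\le \frac{C_\Omega^{DG}}{2\nu}\|f\|_0^2+\frac{\nu}{2C_\Omega^{DG}}\|u_h^{DG}\|_0^2$, and the discrete Poincar\'e--Friedrichs inequality $\|v\|_0^2\le C_\Omega^{DG}\vertiii{v}^2$ on $V_h^{DG}$ (valid precisely because the jump contributions enter the norm $\vertiii{\cdot}$) converts the $\vertiii{v}^2$ terms into multiples of $\|u_h^{DG}\|_0^2$. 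Collecting these yields a lower bound of the same shape as \eqref{eqcrnc12}, namely
\[
(P_{DG}(u_h^{DG}),u_h^{DG})\ge \frac{1}{C_\Omega^{DG}}\Bigl(\tfrac{\nu}{2}-\tfrac{\beta}{2}(1+\gamma^2)C_\Omega^{DG}-\tfrac{\alpha^2}{\beta}+\beta\gamma C_\Omega^{DG}\Bigr)\|u_h^{DG}\|_0^2-\frac{C_\Omega^{DG}}{2\nu}\|f\|_0^2.
\]

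Finally, under the stated threshold $\nu+\beta\gamma C_\Omega^{DG}>\beta(1+\gamma)^2C_\Omega^{DG}+\frac{2\alpha^2}{\beta}$ the quadratic coefficient is strictly positive, so choosing $\|u_h^{DG}\|_0=k_{DG}$ with $k_{DG}$ exceeding the prescribed bound makes the right-hand side non-negative; hence $(P_{DG}(v),v)\ge 0$ for all $v$ on that sphere, and Brouwer's fixed-point theorem forces a zero of $P_{DG}$, which is exactly a solution of \eqref{dgweakform}. I expect the only genuinely DG-specific obstacle to be the bookkeeping of the stabilization constant $\alpha_a$ from Lemma~\ref{dglem11} and the correct invocation of the discrete Poincar\'e--Friedrichs inequality in the broken norm $\vertiii{\cdot}$, since all the nonlinear estimates are already packaged in the coercivity lemma and transfer unchanged from the NCFEM setting.
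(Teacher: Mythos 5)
Your proposal matches the paper's proof essentially line for line: the same map $P_{DG}$ defined via $(P_{DG}(u_h^{DG}),v)=A_{DG}(u_h^{DG},v)-(f,v)$, the same intermediate coercivity bound from Lemma~\ref{dglem11}, the same Cauchy--Schwarz/Young treatment of $(f,u_h^{DG})$ with the discrete Poincar\'e--Friedrichs inequality $\|v\|_0^2\le C_\Omega^{DG}\vertiii{v}^2$, and the same application of Brouwer's fixed-point theorem on the sphere $\|v\|_0=k_{DG}$. The only deviation is that your final quadratic coefficient drops the stabilization constant $\alpha_a$ (you write $\tfrac{\nu}{2}$ where the paper's inequality \eqref{eqdg12} keeps $\tfrac{\alpha_a\nu}{2}$), a bookkeeping slip you yourself flag and one that the paper's own theorem statement commits as well.
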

\begin{proof}
 Proceeding as before, we introduce the map 
 $P_{DG}:V_h^{DG}\rightarrow V_h^{DG}$ with
 \[
 (P_{DG}(u_{h}^{DG}),v)=A_{DG}(u_{h}^{DG},v)-(f,v),
 \]
 which is well-defined and continuous. 
 Choosing $v=u_h^{DG}$ in Lemma \ref{dglem11} yields 
 \begin{align}\label{eqdg12}
 (P_{DG}(u_h^{DG}),&u_h^{DG})\nonumber\\
 &\ge \alpha_a\nu \vertiii{ u_h^{DG}}^2-\frac{\beta}{2}(1+\gamma^2)\|u^{DG}_h\|_{0}^2-\frac{2\alpha^2}{\beta}\vertiii{ u_h^{DG}} ^2+\beta\gamma\|u^{DG}_h\|_{0}^2\nonumber\\
 &\quad-\frac{C_{\Omega}^{DG}}{2\nu}\|f\|_0^2-\frac{\nu}{2C_{\Omega}^{DG}}\|u^{DG}_h\|_0^2,\nonumber\\
 &\ge\frac{\alpha_a}{C_{\Omega}^{DG}} \left(\frac{\nu}{2}-\frac{\beta(1+\gamma^2)C_{\Omega}^{DG}}{2\alpha_a}-\frac{\alpha^2}{\beta\alpha_a}+\frac{\beta\gamma C_{\Omega}^{DG}}{\alpha_a}\right)\| u_{h}^{DG}\|_{0}^2-\frac{C_{\Omega}^{DG}}{2\nu}\|f\|_0^2.
 \end{align}
 Next, let us define $\|u_h^{DG}\|_0=k_{DG}$, and note that  
 \begin{align*}
 k_{DG}> \frac{(C_{\Omega}^{DG})}{\nu\sqrt{\alpha_a\nu+2\beta\gamma C_{\Omega}^{DG}-\beta(1+\gamma)^2C_{\Omega}^{DG}-\frac{2\alpha^2}{\beta}}}\|f\|_{0}, 
 \end{align*} 
 provided that $\nu +2\beta \gamma C_{\Omega}^{DG}> \beta(1+\gamma)^2C_{\Omega}^{DG}+\frac{2\alpha^2}{\beta}$. Then the RHS in (\ref{eqdg12}) is non-negative. Finally,  
Brouwer's fixed point theorem implies that $P_{DG}(u_h^{DG})=0$.
\end{proof}

On the other hand, we can 
establish the following result, whose proof is similar to  \eqref{crnclem111}.

\begin{lemma}\label{dglem111}
There holds:
\[
A_{DG}(v_1,w)-A_{DG}(v_2,w)\ge \tilde{C}_{DG}\vertiii{ w} ,
\]
where $v_1,v_2\in V_{h}^{DG}$ and $w=v_1-v_2$.
\end{lemma}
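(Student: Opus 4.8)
The plan is to establish a monotonicity-type lower bound for the difference $A_{DG}(v_1,w)-A_{DG}(v_2,w)$ with $w=v_1-v_2$, proceeding exactly in parallel to the non-conforming case treated in Lemma \ref{crnclem111}. First I would expand the difference using the three constituent forms:
\begin{align*}
A_{DG}(v_1,w)-A_{DG}(v_2,w)&=\nu a_{DG}(v_1-v_2,w)+\alpha[b_{DG}(\bm{v}_1;v_1,w)-b_{DG}(\bm{v}_2;v_2,w)]\\
&\quad-\beta[(C(v_1),w)-(C(v_2),w)].
\end{align*}
The diffusive term is handled by the coercivity estimate in Lemma \ref{dglem11}, giving $\nu a_{DG}(w,w)\ge \alpha_a\nu\vertiii{w}^2$. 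The reaction term is controlled by reusing estimate \eqref{327} verbatim (it is purely algebraic in the pointwise values of $v_1,v_2,w$ and does not depend on conformity), yielding the bound involving $-\beta\gamma\|w\|_0^2$, the two favourable terms $-\tfrac{\beta}{4}\|v_i^\delta w\|_0^2$, and the term $\tfrac{\beta}{2}2^{2\delta}(1+\gamma)^2(\delta+1)^2\|w\|_0^2$.

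Next I would treat the advective difference $\alpha[b_{DG}(\bm{v}_1;v_1,w)-b_{DG}(\bm{v}_2;v_2,w)]$. Writing $\bm{v}_i=(v_i^\delta,v_i^\delta)^T$ and using the integration-by-parts form of $b_{DG}$, the volume contribution reduces (via $v_1^{\delta+1}-v_2^{\delta+1}$ and Taylor's formula) to the same structure as in \eqref{6.49}, absorbable into $\tfrac{\nu}{2}\|\nabla_h w\|_{0,\mathcal{T}_h}^2$ plus $\tfrac{2^{2\delta}C_\star\alpha^2}{4\nu}(\|v_1^\delta w\|_0^2+\|v_2^\delta w\|_0^2)$ after a Cauchy-Schwarz/inverse-inequality step. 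The edge (upwind) contributions are dissipative by construction of the upwind flux, so they contribute a nonnegative quantity to the difference and may simply be bounded below by zero. Collecting everything produces, under condition \eqref{234} (which forces $\tfrac{\beta}{4}-\tfrac{2^{2\delta}C_\star\alpha^2}{4\nu}\ge0$ and $\beta\gamma-C(\beta,\alpha,\delta)$ together with the remaining diffusion to be positive), a bound of the form
\[
A_{DG}(v_1,w)-A_{DG}(v_2,w)\ge c_1\vertiii{w}^2+c_2\,\|w\|_{L^{2\delta+2}}^{2\delta+2}
\]
with $c_1,c_2>0$.

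Finally, to match the stated inequality, which is linear in $\vertiii{w}$, I would combine the squared bound with a uniform a priori bound on $\vertiii{w}$ for the discrete solutions. Since $v_1,v_2\in V_h^{DG}$ are discrete solutions (or elements of the fixed ball of radius $k_{DG}$ from the existence theorem), $\vertiii{w}\le \vertiii{v_1}+\vertiii{v_2}\le 2R$ for a fixed $R$; hence $\vertiii{w}^2\ge (2R)^{-1}\vertiii{w}\cdot\vertiii{w}$ gives $c_1\vertiii{w}^2\ge \tilde{C}_{DG}\vertiii{w}$ with $\tilde{C}_{DG}=c_1\vertiii{w}/(2R)$, or more cleanly one simply sets $\tilde{C}_{DG}=c_1\vertiii{w}$ so that the inequality reads as a factored form of the coercivity. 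The main obstacle I anticipate is the rigorous treatment of the upwind edge terms in the advective difference: establishing that the jump and upwinding contributions, evaluated at $w=v_1-v_2$, are genuinely sign-definite (dissipative) rather than merely bounded requires care with the $|\bm{w}\cdot\bm{n}_K|$ factor and the traces $u^e$, and this is where the DG analysis departs most sharply from the NCFEM argument.
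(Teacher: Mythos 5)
Your core computation is exactly what the paper intends: it gives no details for this lemma beyond stating that the proof is ``similar to'' Lemma \ref{crnclem111}, and your expansion into diffusive, advective and reaction parts, the reuse of \eqref{327} for the reaction term, the reduction of the volume advection via $v_1^{\delta+1}-v_2^{\delta+1}$ and Taylor's formula, and the absorption step under \eqref{234} reproduce that argument faithfully. Two points, however, need repair.

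First, your claim that the upwind edge contributions in $b_{DG}(\bm{v}_1;v_1,w)-b_{DG}(\bm{v}_2;v_2,w)$ are ``dissipative by construction'' and can be dropped is not justified: sign-definiteness of the upwind flux holds for a \emph{fixed} advecting field, whereas here the two forms carry different fields $\bm{v}_1=(v_1^{\delta},v_1^{\delta})^T$ and $\bm{v}_2=(v_2^{\delta},v_2^{\delta})^T$, so their difference is not an upwind form evaluated at $w$ and has no a priori sign. You flag this yourself as the delicate point, but the resolution is not sign-definiteness; it is the same mechanism as the second estimate of Lemma \ref{dglem11}: bound the edge terms by Cauchy--Schwarz and inverse trace inequalities, and absorb them into the jump part of $\vertiii{w}^2$ and into the terms $\|v_i^{\delta}w\|_0^2$, which is precisely how the constant $C_{\star}$ arises in Lemma \ref{crnclem111} and why the paper can invoke that lemma's proof verbatim.

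Second, your conversion of the quadratic bound $c_1\vertiii{w}^2$ into the linear bound $\tilde{C}_{DG}\vertiii{w}$ is not legitimate as written: taking $\tilde{C}_{DG}=c_1\vertiii{w}$ makes the ``constant'' depend on $w$, and the lemma is stated for \emph{arbitrary} $v_1,v_2\in V_h^{DG}$, so no uniform radius $R$ is available. The honest output of the argument --- and what Lemma \ref{crnclem111} actually delivers in the nonconforming case --- is the quadratic estimate $A_{DG}(v_1,w)-A_{DG}(v_2,w)\ge \tilde{C}_{DG}\vertiii{w}^2$; the missing exponent in the statement (and in its use in Theorem \ref{dgerr11}) is best read as a typographical slip, since the error estimate there goes through by bounding the right-hand side by $Ch\vertiii{\chi}$ and dividing by $\vertiii{\chi}$. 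You should prove the quadratic form of the inequality rather than manufacture the linear one.
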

Finally, we can state an a priori error estimate in the following theorem.
\begin{theorem}\label{dgerr11}
	Let $V_h^{DG}$ be as in (\ref{dgsubspace1}), and let us assume \eqref{234} and that $u$ satisfies \eqref{8p2}. Then,  there exists $\tilde{C}$ is independent of $h$ such that 
	\[
	\vertiii{ u^{DG}_h-u|} \leq \tilde{C}h.
	\]
\end{theorem}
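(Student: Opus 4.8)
The plan is to follow the template established for Theorems \ref{thm7.1} and \ref{ncthm11}. First I would introduce an interpolant $W\in V_h^{DG}$ of the exact solution $u$ and split the error through the triangle inequality, $\vertiii{u_h^{DG}-u}\le \vertiii{u_h^{DG}-W}+\vertiii{W-u}$. The approximation term is immediately controlled: by the interpolation estimates \eqref{ncapprox11}--\eqref{ncapprox12} together with the definition of $\vertiii{\cdot}$ one gets $\vertiii{W-u}\le \tilde C h$, so that the entire analysis reduces to bounding the discrete error $\vertiii{u_h^{DG}-W}$.

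Next I would set $\chi=u_h^{DG}-W\in V_h^{DG}$ and invoke Lemma \ref{dglem111}, which furnishes the lower bound $\tilde C_{DG}\vertiii{\chi}^2\le A_{DG}(u_h^{DG},\chi)-A_{DG}(W,\chi)$. I would then exploit the discrete equation \eqref{dgweakform}, namely $A_{DG}(u_h^{DG},\chi)=(f,\chi)$, together with the consistency of the DG scheme: since the exact $H^2$ solution of \eqref{8p2} is continuous and vanishes on $\partial\Omega$, all the jump and exterior-trace quantities $[\![u\otimes\bm{n}]\!]$, $[\![\nabla_h u]\!]$ and $u^e-u$ vanish, so that $A_{DG}(u,\chi)=(f,\chi)$ up to a consistency residual $E_h(\chi)$ supported on the edges. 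This lets me recast the right-hand side as $[A_{DG}(u,\chi)-A_{DG}(W,\chi)]-E_h(\chi)$.

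The bulk of the work is to bound each of these two contributions by $\tilde C h\,\vertiii{\chi}$. For $A_{DG}(u,\chi)-A_{DG}(W,\chi)$ I would reuse the nonlinear estimates \eqref{327} and \eqref{6.49} (Taylor's formula, H\"older's and Young's inequalities) applied to the convective and reactive differences in $W-u$, exactly as in Theorem \ref{thm7.1}, bounding the DG convective difference by means of the inverse inequality of Lemma \ref{crnclem111}; the interior and boundary penalty contributions acting on $W-u$ are bounded through the edge interpolation estimate \eqref{ncapprox12} combined with the weighting $\gamma_h=\gamma/h_E$ and the structure of $\vertiii{\cdot}$, after a Cauchy--Schwarz over edges. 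For the residual $E_h(\chi)$ I would, following the NCFEM computation, rewrite each edge integral against the $P_0$-projection so that \eqref{P0projection} and trace/inverse inequalities yield $|E_h(\chi)|\le \tilde C\big(\sum_{K}\nu h_K^2\|u\|_{2,K}^2\big)^{1/2}\vertiii{\chi}$. Dividing through by $\vertiii{\chi}$ and returning to the splitting completes the estimate.

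I expect the main obstacle to be the careful handling of the DG-specific terms, i.e. verifying consistency and controlling the interior/boundary penalty terms and especially the upwind convective flux inside $b_{DG}$, while keeping exact track of the powers of $h$ and of the penalty weight $\gamma_h$ so that each edge contribution is genuinely of order $h$ in the norm $\vertiii{\cdot}$. Unlike the conforming situation these stabilization and jump terms cannot be ignored and must be matched against the trace-type bounds \eqref{ncapprox12}--\eqref{P0projection}; in particular the nonlinear convective difference requires an inverse inequality to convert the broken gradient into an $L^2$ factor before Young's inequality is applicable.
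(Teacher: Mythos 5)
Your proposal follows the paper's proof essentially verbatim: the same triangle-inequality splitting with an interpolant $W$, the same use of the discrete equation \eqref{dgweakform} together with the lower bound of Lemma \ref{dglem111} applied to $\chi=u_h^{DG}-W$, and the same reduction of $A_{DG}(u,\chi)-A_{DG}(W,\chi)$ to the nonlinear estimates already established for Theorems \ref{thm7.1} and \ref{ncthm11}. The only (harmless) deviation is that you carry an NCFEM-style consistency residual $E_h(\chi)$ and bound it via \eqref{P0projection}, whereas the paper uses that the DG form is exactly consistent for the $H^2(\Omega)\cap H_0^1(\Omega)$ solution --- all jumps and exterior-trace terms of $u$ vanish, so $A_{DG}(u,\chi)=(f,\chi)$ holds with $E_h\equiv 0$ and no such bound is needed.
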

\begin{proof}
Using triangle inequality readily gives
\[
\vertiii{ u_h^{DG}-u} \le \vertiii{ u_h^{DG}-W} +\vertiii{W-u}.
\]
Proceeding again as in the conforming and non-conforming cases, we have the bound 
\[
\vertiii{ W-u} \le Ch.
\]
Using the formulation \eqref{dgweakform}, we have
\[
A_{DG}(u_h^{DG},\chi)=(f,\chi), \quad \ \text{ for all }\  \chi\in V_h^{DG},
\]
and if $u\in D(A)=\H_0^1(\Omega)\cap \H^2(\Omega)$ satisfies \eqref{8p2}, then we 
immediately have that 
\[
A_{DG}(u,\chi)=(f,\chi), \quad \ \text{ for all }\  \chi\in V_h^{DG}.
\]
Finally, recalling Lemma \eqref{dglem111}, can write 
\[
\tilde{C}\vertiii{\chi} \le A_{DG}(u_h^{DG},\chi)-A_{DG}(W,\chi)= A_{DG}(u,\chi)-A_{DG}(W,\chi),
\]
	and the rest of the proof follows much in the same way as in Theorems \ref{thm7.1} and \ref{ncthm11}.
\end{proof}

\begin{remark}
Note that we can drive the following $L^2$-error estimates, essentially as a direct consequence of Theorems \ref{thm7.1}, \ref{ncthm11} and \ref{dgerr11}
\[
||u-u_h||_0\le C\,h,\quad ||u-u_h^{CR}||_0\le C\,h, \quad ||u-u_h^{DG}||_{0}\le C\,h,
\]
where the constant $C$ is independent of $h$. These $L^2$-error estimates are however sub-optimal. We nevertheless provide in Section \ref{sec4} numerical evidence that all three numerical methods achieve optimal convergence also in the $L^2-$norm. 
\end{remark}

\section{Numerical results}\label{sec4}
In this section, we present a few computational results that confirm the 
theoretical results advanced in Section~\ref{sec3}. All examples have been 
implemented with the help of the open-source finite element library FEniCS \cite{alnaes_ans15}. 

\subsection{Example 1: Accuracy verification against smooth solutions} 
First we consider problem (\ref{8.1}) defined on the domain $\Omega=(0,1)^d$, where $d=2,3$. The two expressions of the exact solution $u$ are as follows:
\[
\mbox{Case } 1: u= \Pi_{i=1}^{d}(x_i-x_i^2),\qquad 
\mbox{Case } 2: u= \frac{1}{16}\Pi_{i=1}^{d}\sin(\pi x_i).
\]
We choose the values of parameters as follows: $\alpha=0.2$, $\beta=0.1$, $\nu=2$ and $\gamma=0.5$, and the right-hand side 
datum $f$ is manufactured using these closed-form solutions. A sequence of successively refined uniform meshes is constructed 
and the error history (decay of errors measured in the energy and $L^2-$norm as well 
as corresponding convergence rates) for the numerical solutions constructed with CGFEM, NCFEM and DGFEM are reported in what follows. Table \ref{table11-12} presents the convergence results related to Case 1 for 2D and 3D, whereas Table  \ref{table13-14} shows the results pertaining to Case 2. In all tables we can observe that errors in the energy and $L^2-$norms decrease with the mesh size at rates $O(h)$ and $O(h^2)$, respectively. We have used in all simulations a first-order polynomial degree. Other sets of computations performed after modifying the values of the parameter $\delta$ to $3$ and $5$ (not reported here) also show optimal convergence. We can also see that the number of Newton iterations 
required to reach the prescribed tolerance of $10^{-6}$ is at most three. 

\begin{table}[ht!]
\caption{Example 1, case 1. Errors, iteration count, and convergence rates for the numerical solutions $u_h$, $u_h^{CR}$ and $u_h^{DG}$.}
\label{table11-12}
\begin{center}
{\small
\begin{tabular}{| c | c | c | c | c | c | c |}
\hline
\multicolumn{7}{|c|}{Error history in 2D}\\
\hline
\multirow{6}{*}{CGFEM}&\multicolumn{1}{|c|}{mesh}&\multicolumn{1}{|c|} {Newton  it.} &\multicolumn{1}{|c|} {$H^1$-error}&\multicolumn{1}{|c|}{$O(h)$}&\multicolumn{1}{|c|}{$L^2$-error}&\multicolumn{1}{|c|}{$O(h^2)$}\\
\cline{2-7}
&{$4\times 4$}&$3$ &$5.90(-02)$ &$-$ &$5.38(-03)$ & $-$\\
\cline{2-7}
&{$8\times 8$}&$3$ &$3.01(-02)$ &$0.9709$ &$1.42(-03)$ & $1.9217$\\
\cline{2-7}
&{$16\times 16$}&$3$ &$1.51(-02)$ &$0.9952$ &$3.60(-04)$ & $1.9798$\\
\cline{2-7}
&{$32\times 32$}&$3$ &$7.60(-03)$ &$0.9904$ &$9.03(-05)$ & $1.9951$\\
\cline{1-7}
\multirow{5}{*}{NCFEM}
&{$4\times 4$}&$3$ &$4.62(-02)$ &$-$ &$2.32(-03)$ & $-$\\
\cline{2-7}
&{$8\times 8$}&$3$ &$2.35(-02)$ &$0.9752$ &$6.10(-04)$ & $2.1026$\\
\cline{2-7}
&{$16\times 16$}&$3$ &$1.18(-02)$ &$0.9938$ &$1.54(-04)$ & $1.9858$\\
\cline{2-7}
&{$32\times 32$}&$3$ &$5.91(-03)$ &$0.9975$ &$3.88(-05)$ & $1.9888$\\
\cline{1-7}
\multirow{5}{*}{DGFEM}
&{$4\times 4$}&$3$ &$5.83(-02)$ &$-$ &$5.27(-03)$ & $-$\\
\cline{2-7}
&{$8\times 8$}&$3$ &$2.94(-02)$ &$0.9876$ &$1.36(-03)$ & $1.9541$\\
\cline{2-7}
&{$16\times 16$}&$3$ &$1.46(-02)$ &$1.0098$ &$3.40(-04)$ & $2.0000$\\
\cline{2-7}
&{$32\times 32$}&$3$ &$7.25(-03)$ &$1.0099$ &$8.43(-05)$ & $2.0119$\\
\hline
\multicolumn{7}{|c|}{Error history in 3D}\\
\hline
\multirow{6}{*}{CGFEM}&\multicolumn{1}{|c|}{mesh}&\multicolumn{1}{|c|} {Newton  it.} &\multicolumn{1}{|c|} {$H^1$-error}&\multicolumn{1}{|c|}{$O(h)$}&\multicolumn{1}{|c|}{$L^2$-error}&\multicolumn{1}{|c|}{$O(h^2)$}\\
\cline{2-7}
&{$4\times 4\times 4$}&$2$ &$1.63(-02)$ &$-$ &$1.52(-03)$ & $-$\\
\cline{2-7}
&{$8\times 8\times 8$}&$2$ &$8.54(-03)$ &$0.9325$ &$4.22(-04)$ & $1.8487$\\
\cline{2-7}
&{$16\times 16\times 16$}&$2$ &$4.32(-03)$ &$0.9832$ &$1.08(-04)$ & $1.9662$\\
\cline{2-7}
&{$32\times 32\times 32$}&$2$ &$2.16(-03)$ &$1.0000$ &$2.73(-05)$ & $1.9840$\\
\cline{1-7}
\multirow{5}{*}{NCFEM}
&{$4\times 4\times 4$}&$2$ &$1.06(-02)$ &$-$ &$5.42(-04)$ & $-$\\
\cline{2-7}
&{$8\times 8\times 8$}&$2$ &$5.39(-03)$ &$0.9757$ &$1.41(-04)$ & $1.9426$\\
\cline{2-7}
&{$16\times 16\times 16$}&$2$ &$2.70(-03)$ &$0.9973$ &$3.64(-05)$ & $1.9573$\\
\cline{2-7}
&{$32\times 32 \times 32$}&$2$ &$1.35(-03)$ &$1.0000$ &$8.99(-05)$ & $2.0175$\\
\cline{1-7}
\multirow{5}{*}{DGFEM}
&{$4\times 4\times 4$}&$3$ &$1.59(-02)$ &$-$ &$1.44(-03)$ & $-$\\
\cline{2-7}
&{$8\times 8\times 8$}&$3$ &$8.05(-03)$ &$0.9820$ &$3.85(-04)$ & $1.5409$\\
\cline{2-7}
&{$16\times 16\times 16$}&$3$ &$3.94(-03)$ &$1.0308$ &$9.49(-05)$ & $2.0204$\\
\cline{2-7}
&{$32\times 32 \times 32$}&$3$ &$1.93(-03)$ &$1.0296$ &$2.31(-05)$ & $2.0385$\\
\hline
\end{tabular}}
\end{center}
\end{table}

\begin{table}[ht!]
\caption{Example 1, case 2.  Errors, iteration count, and convergence rates for the numerical solutions $u_h$, $u_h^{CR}$ and $u_h^{DG}$.}
\label{table13-14}
\begin{center}
{\small
\begin{tabular}{| c | c | c | c | c | c | c | }
\hline
\multicolumn{7}{|c|}{Error history in 2D}\\
\hline
\multirow{5}{*}{CGFEM}&\multicolumn{1}{|c|}{mesh}&\multicolumn{1}{|c|} {Newton  it.} &\multicolumn{1}{|c|} {$H^1$-error}&\multicolumn{1}{|c|}{$O(h)$}&\multicolumn{1}{|c|}{$L^2$-error}&\multicolumn{1}{|c|}{$O(h^2)$}\\
\cline{2-7}
& $4\times 4$&$3$ &$1.26(-01)$ &$-$ &$1.08(-02)$& $-$\\
\cline{2-7}
&{$8\times 8$}&$3$ &$6.84(-02)$ &$0.8814$ &$3.21(-03)$ & $1.7504$\\
\cline{2-7}
&{$16\times 16$}&$3$ &$3.49(-02)$ &$0.9708$ &$8.45(-04)$ & $1.9256$\\
\cline{2-7}
&{$32\times 32$}&$3$ &$1.75(-02)$ &$0.9959$ &$2.14(-04)$ & $1.9813$\\
\cline{1-7}
\multirow{4}{*}{NCFEM}& $4\times 4$&$3$ &$1.22(-01)$ &$-$ &$7.62(-02)$& $-$\\
\cline{2-7}
&{$8\times 8$}&$3$ &$6.44(-02)$ &$0.9217$ &$2.09(-03)$ & $1.8663$\\
\cline{2-7}
&{$16\times 16$}&$3$ &$3.26(-02)$ &$0.9822$ &$5.38(-04)$ & $1.9578$\\
\cline{2-7}
&{$32\times 32$}&$3$ &$1.63(-02)$ &$0.9912$ &$1.35(-04)$ & $1.9946$\\
\cline{1-7}
\multirow{5}{*}{DGFEM}&{$4\times 4$}&$3$ &$1.23(-01)$ &$-$ &$1.01(-02)$& $-$\\ 
\cline{2-7}
&{$8\times 8$}&$3$ &$6.58(-02)$ &$0.9025$ &$2.99(-03)$ & $1.7561$\\
\cline{2-7}
&{$16\times 16$}&$3$ &$3.34(-02)$ &$0.9782$ &$7.86(-04)$ & $1.9275$\\
\cline{2-7}
&{$32\times 32$}&$3$ &$1.68(-02)$ &$0.9914$ &$1.99(-04)$ & $1.9818$\\
\hline
\multicolumn{7}{|c|}{Error history in 3D}\\
\hline
\multirow{6}{*}{CGFEM}&\multicolumn{1}{|c|}{mesh}&\multicolumn{1}{|c|} {Newton  it.} &\multicolumn{1}{|c|} {$H^1$-error}&\multicolumn{1}{|c|}{$O(h)$}&\multicolumn{1}{|c|}{$L^2$-error}&\multicolumn{1}{|c|}{$O(h^2)$}\\
\cline{2-7}
& $4\times 4 \times 4$&$3$ &$1.07(-01)$ &$-$ &$9.25(-03)$& $-$\\
\cline{2-7}
&{$8\times 8\times 8$}&$3$ &$5.98(-02)$ &$0.7650$ &$2.97(-03)$ & $1.4731$\\
\cline{2-7}
&{$16\times 16\times 16$}&$3$ &$3.08(-02)$ &$0.9325$ &$8.04(-04)$ & $1.8487$\\
\cline{2-7}
&{$32\times 32\times 32$}&$3$ &$1.55(-02)$ &$0.9832$ &$2.05(-04)$ & $1.9662$\\
\cline{1-7}
\multirow{5}{*}{NCFEM}&{$4\times 4\times 4$}&$3$ &$8.79(-02)$ &$-$ &$5.09(-03)$& $-$\\ 
\cline{2-7}
&{$8\times 8\times 8$}&$3$ &$4.54(-02)$ &$0.9159$ &$1.39(-03)$ & $1.7789$\\
\cline{2-7}
&{$16\times 16\times 16$}&$3$ &$2.29(-02)$ &$0.9757$ &$3.56(-04)$ & $1.9426$\\
\cline{2-7}
&{$32\times 32\times 32$}&$3$ &$1.14(-02)$ &$0.9973$ &$8.97(-05)$ & $1.9573$\\
\cline{1-7}
\multirow{5}{*}{DGFEM}&{$4\times 4\times 4$}&$3$ &$1.00(-01)$ &$-$ &$8.03(-03)$& $-$\\ 
\cline{2-7}
&{$8\times 8\times 8$}&$3$ &$5.38(-02)$ &$0.8943$ &$2.51(-03)$ & $1.6777$\\
\cline{2-7}
&{$16\times 16\times 16$}&$3$ &$2.74(-02)$ &$0.9734$ &$6.74(-04)$ & $1.8969$\\
\cline{2-7}
&{$32\times 32\times 32$}&$3$ &$1.37(-02)$ &$1.0000$ &$1.71(-04)$ & $1.9788$\\
\hline
\end{tabular}}
\end{center}
\end{table}

\subsection{Example 2: Stationary wave solution} 
Next we consider (\ref{8.1}) endowed with non-homogeneous Dirichlet boundary conditions. 
The domain is again as in Example 1, and the setup of the problem has been adopted 
from \cite{VJE}, where the exact solution is 
\[
u= 0.5-0.5\tanh(z/(r-\bar{\alpha})), 
\]
with $r= \sqrt{\bar{\alpha}^2+8}$ and $\bar{\alpha}=\alpha\sqrt{2}$.  The values of the 
model parameters are now $\alpha=0.2$, $\beta=1$, $\nu=16$ and $\gamma=0.5$. In  Table \ref{table15-16} we present 
the convergence rates associated with the errors in the energy norm as well as $L^2$-norm for CGFEM, NCFEM and DGFEM. Again we observe optimal convergence in all instances.

\begin{table}[ht!]
\caption{Example 2. Errors, iteration count, 
and convergence rates for the numerical solutions $u_h$, $u_h^{CR}$ and $u_h^{DG}$.}
\label{table15-16}
\begin{center}
{\small
\begin{tabular}{| c | c | c | c | c | c | c | }
\hline
\multicolumn{7}{|c|}{Error history in 2D}\\
\hline
\multirow{6}{*}{CGFEM}&\multicolumn{1}{|c|}{mesh}&\multicolumn{1}{|c|} {Newton  it.} &\multicolumn{1}{|c|} {$H^1$-error}&\multicolumn{1}{|c|}{$O(h)$}&\multicolumn{1}{|c|}{$L^2$-error}&\multicolumn{1}{|c|}{$O(h^2)$}\\
\cline{2-7}
&{$4\times 4$}&$3$ &$1.16(-02)$ &$-$ &$8.99(-04)$ & $-$\\
\cline{2-7}
&{$8\times 8$}&$3$ &$5.83(-03)$ &$0.9926$ &$2.26(-04)$ & $1.9920$\\
\cline{2-7}
&{$16\times 16$}&$3$ &$2.91(-03)$ &$1.0025$ &$5.67(-05)$ & $1.9949$\\
\cline{2-7}
&{$32\times 32$}&$3$ &$1.45(-03)$ &$1.0050$ &$1.41(-05)$ & $2.0077$\\
\cline{1-7}
\multirow{5}{*}{NCFEM}
&{$4\times 4$}&$3$ &$7.96(-03)$ &$-$ &$3.91(-04)$ & $-$\\
\cline{2-7}
&{$8\times 8$}&$3$ &$3.98(-03)$ &$1.0000$ &$9.80(-05)$ & $1.9963$\\
\cline{2-7}
&{$16\times 16$}&$3$ &$1.99(-03)$ &$1.0000$ &$2.45(-05)$ & $2.0000$\\
\cline{2-7}
&{$32\times 32$}&$3$ &$9.96(-04)$ &$0.9986$ &$6.13(-06)$ & $1.9988$\\
\cline{1-7}
\multirow{5}{*}{DGFEM}
&{$4\times 4$}&$3$ &$1.13(-02)$ &$-$ &$8.84(-04)$ & $-$\\
\cline{2-7}
&{$8\times 8$}&$3$ &$5.57(-03)$ &$1.0206$ &$2.19(-04)$ & $2.0131$\\
\cline{2-7}
&{$16\times 16$}&$3$ &$2.76(-03)$ &$1.0130$ &$5.47(-05)$ & $2.0013$\\
\cline{2-7}
&{$32\times 32$}&$3$ &$1.37(-03)$ &$1.0105$ &$1.36(-05)$ & $2.0079$\\
\hline
\multicolumn{7}{|c|}{Error history in 3D}\\
\hline
\multirow{6}{*}{CGFEM}&\multicolumn{1}{|c|}{mesh}&\multicolumn{1}{|c|} {Newton  it.} &\multicolumn{1}{|c|} {$H^1$-error}&\multicolumn{1}{|c|}{$O(h)$}&\multicolumn{1}{|c|}{$L^2$-error}&\multicolumn{1}{|c|}{$O(h^2)$}\\
\cline{2-7}
&{$4\times 4\times 4$}&$3$ &$2.39(-02)$ &$-$ &$1.98(-03)$ & $-$\\
\cline{2-7}
&{$8\times 8\times 8$}&$3$ &$1.19(-02)$ &$1.0060$ &$5.01(-04)$ & $1.9826$\\
\cline{2-7}
&{$16\times 16\times 16$}&$3$ &$5.98(-03)$ &$0.9927$ &$1.25(-04)$ & $2.0029$\\
\cline{2-7}
&{$32\times 32\times 32$}&$3$ &$2.99(-03)$ &$1.0000$ &$3.14(-05)$ & $1.9931$\\
\cline{1-7}
\multirow{5}{*}{NCFEM}
&{$4\times 4\times 4$}&$3$ &$1.35(-02)$ &$-$ &$7.07(-04)$ & $-$\\
\cline{2-7}
&{$8\times 8\times 8$}&$3$ &$6.75(-03)$ &$1.0000$ &$1.77(-04)$ & $1.9980$\\
\cline{2-7}
&{$16\times 16\times 16$}&$3$ &$3.37(-03)$ &$1.0021$ &$4.42(-05)$ & $2.0016$\\
\cline{2-7}
&{$32\times 32 \times 32$}&$3$ &$1.68(-04)$ &$1.0043$ &$1.10(-05)$ & $2.0065$\\
\cline{1-7}
\multirow{5}{*}{DGFEM}
&{$4\times 4\times 4$}&$3$ &$2.30(-02)$ &$-$ &$1.95(-03)$ & $-$\\
\cline{2-7}
&{$8\times 8\times 8$}&$3$ &$1.11(-02)$ &$1.0511$ &$4.84(-04)$ & $2.0104$\\
\cline{2-7}
&{$16\times 16\times 16$}&$3$ &$5.47(-03)$ &$1.0209$ &$1.19(-04)$ & $2.0240$\\
\cline{2-7}
&{$32\times 32\times 32$}&$3$ &$2.70(-03)$ &$1.0186$ &$2.96(-05)$ & $2.0073$\\
\hline
\end{tabular}}
\end{center}
\end{table}

\subsection{Example 3: Application to nerve pulse propagation} 
To conclude this section, and as a qualitative illustration of the differences between a classical bistable 
equation (without advection and with a simplified cubic nonlinearity induced by $\delta = 1$) 
and the generalized Burgers-Huxley equation, we conduct a simple simulation of a  transient 
problem where also an additional ODE (governing the dynamics of a gating variable $v$) 
is considered so that self-sustained patterns are possible (see, e.g., \cite{murray,bini10}). 
The system reads 
\begin{equation}\label{transientBH}
 \partial_t u + \alpha u^\delta \sum_{i=1}^d\partial_i u - \nu \Delta u  - \beta u (1-u^\delta)(u^\delta-\gamma) + v = 0, \qquad \partial_t v = \varepsilon(u-\rho v).\end{equation}
Setting $\delta = 1$ and $\alpha = 0$, one recovers the well-known FitzHugh-Nagumo equations 
$$ \partial_t u - \nu \Delta u - \beta u (1-u)(u-\gamma) + v = 0, \qquad \partial_t v = \varepsilon(u-\rho v).$$ 
We apply a simple backward Euler time discretization with constant time step $\Delta t = 0.2$, after which we recover a discrete formulation 
resembling \eqref{7p1} for the CFEM (and similarly for the other two methods). The domain $\Omega = (0,300)^2$ is discretized into 
a uniform triangular mesh with 25K elements, and the model parameters are taken as 
$\alpha = 0.1, \delta = 1.5, \beta = \nu = 1, \varepsilon = \gamma = 0.01, \rho = 0.05$ (see also \cite{buerger10} for the classical FitzHugh-Nagumo parameters, whereas the modified terms adopt here very mild values). 
For this example we prescribe Neumann boundary conditions for $u$ on $\partial\Omega$. Figure~\ref{fig:ex3} 
depicts three snapshots of the evolution of $u$ (representing the action potential propagation in a 
piece of nerve tissue, cardiac muscle, or any excitable media) for the classical FitzHugh-Nagumo system 
vs. the modified generalized Burgers-Huxley system \eqref{transientBH}, all numerical solutions computed using the DGFEM setting $\gamma = 2$. The differences in spiral dynamics (initiated with a cross-shaped and shifted initial condition for $u$ and $v$) seem to be more sensitive to 
the amount of additional nonlinearity (encoded in $\delta$), rather than to the intensity of the additional advection (modulated by 
$\alpha$). 

\begin{figure}[ht!]
\caption{Example 3. Snapshots at $t = 80, 200, 650$ of $u_h^{DG}$ for the FitzHugh-Nagumo model using $\delta = 1$, $\alpha = 0$ (top panels) and 
for the modified generalized Burgers-Huxley system \eqref{transientBH} with $\delta = 1$, $\alpha = 0.1$ (middle row) and 
with $\delta = 1.5$, $\alpha = 0.1$ (bottom).}\label{fig:ex3}
\begin{center}
\includegraphics[width=0.325\textwidth]{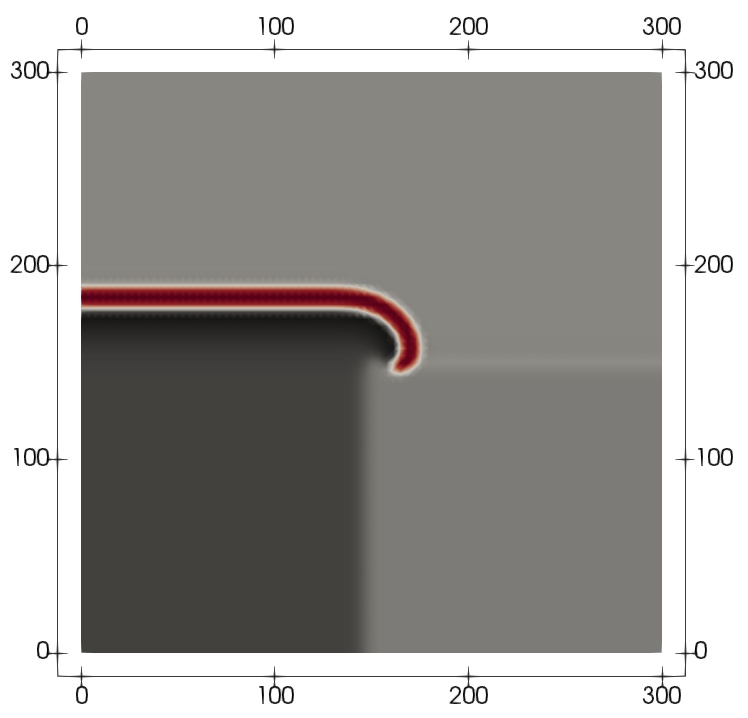}
\includegraphics[width=0.325\textwidth]{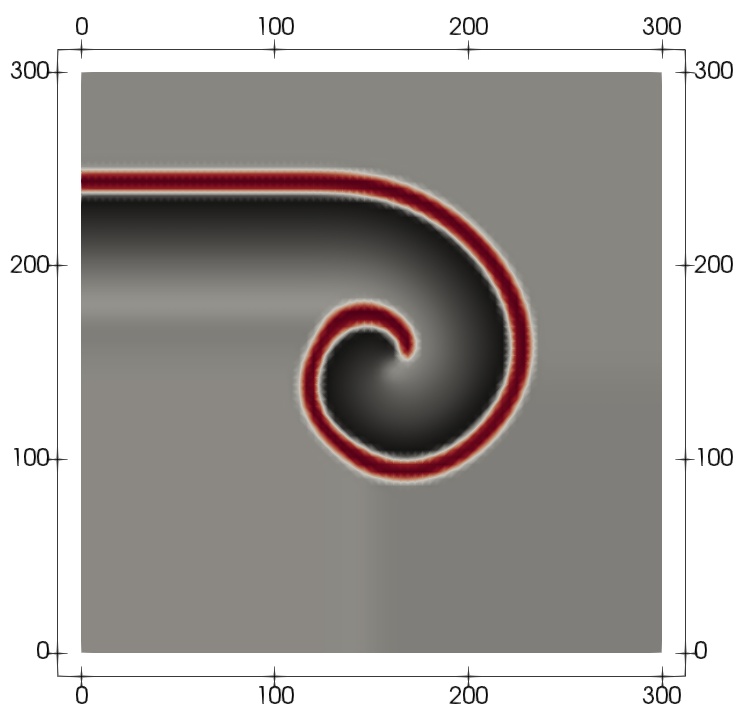}
\includegraphics[width=0.325\textwidth]{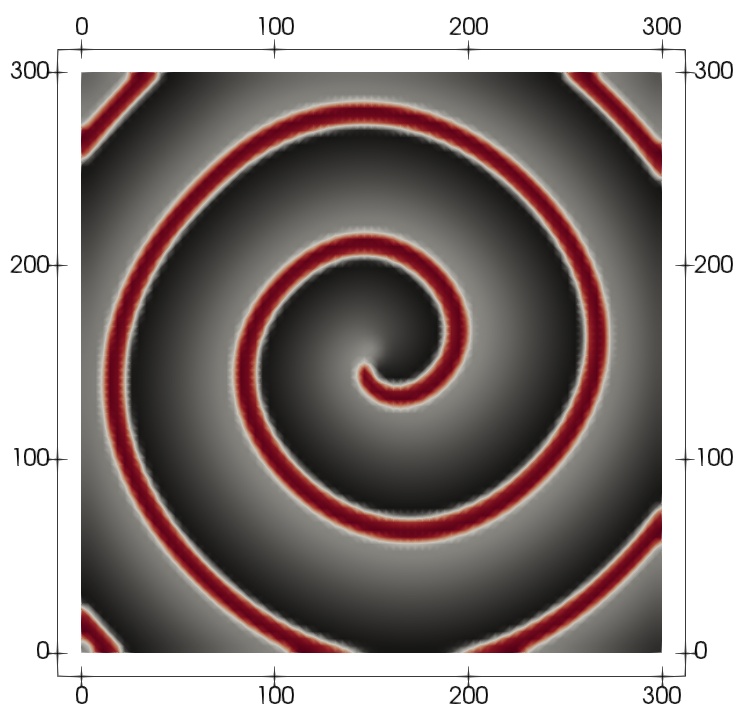}\\
\includegraphics[width=0.325\textwidth]{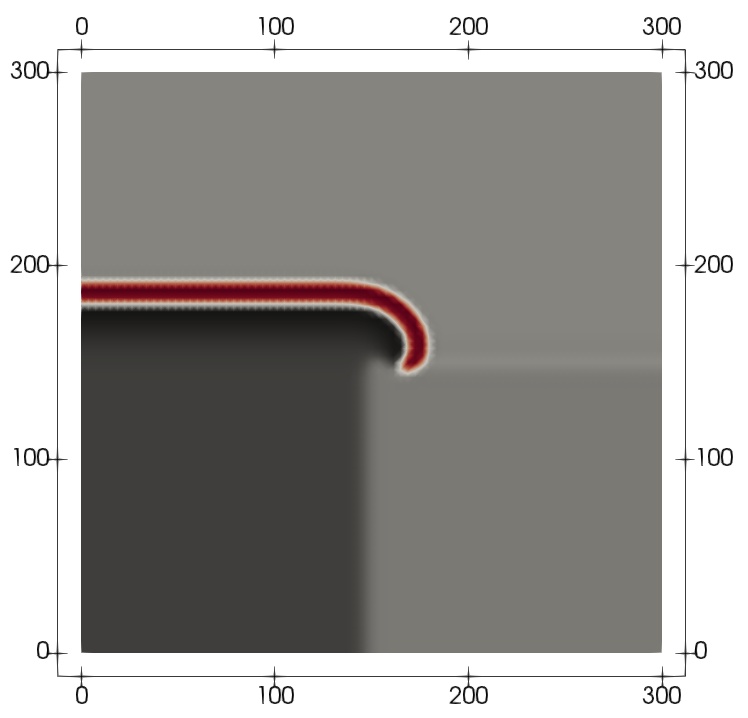}
\includegraphics[width=0.325\textwidth]{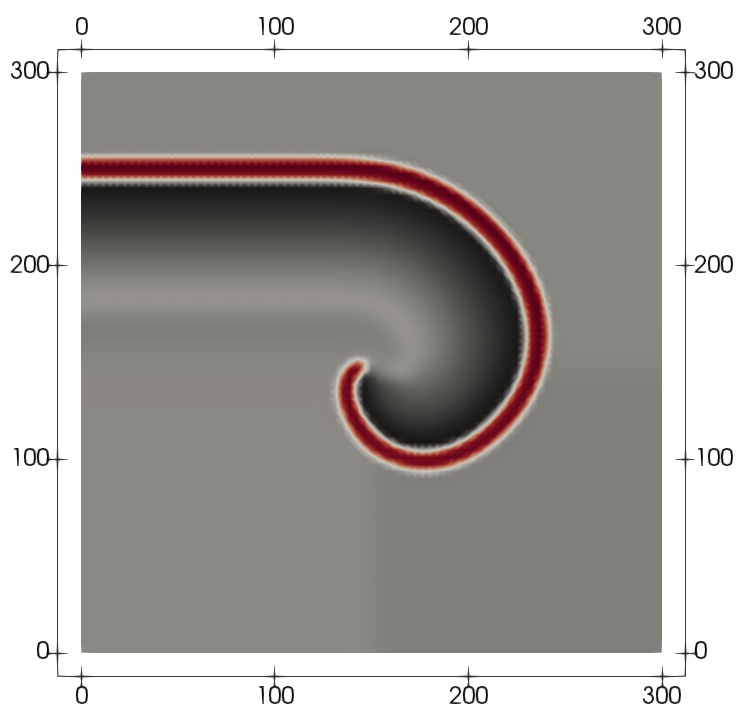}
\includegraphics[width=0.325\textwidth]{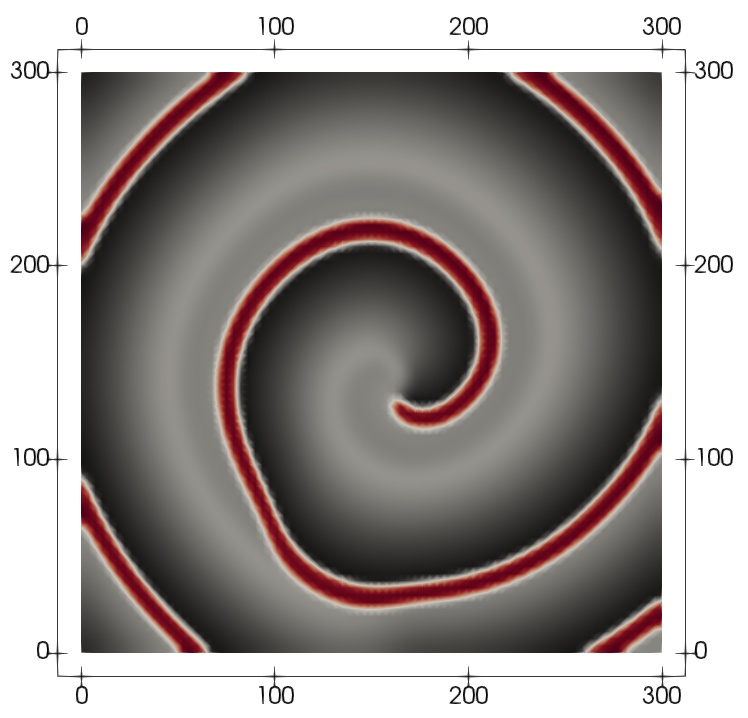}\\
\includegraphics[width=0.325\textwidth]{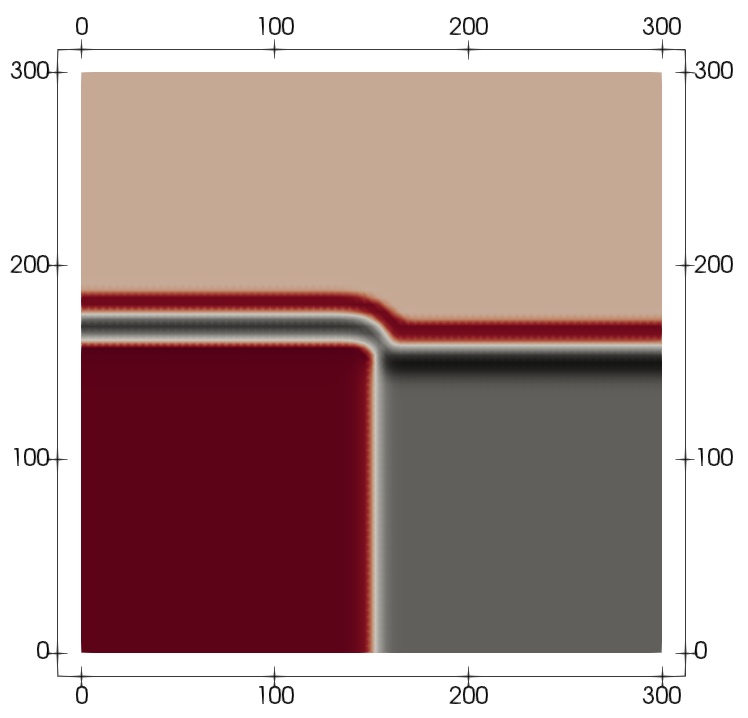}
\includegraphics[width=0.325\textwidth]{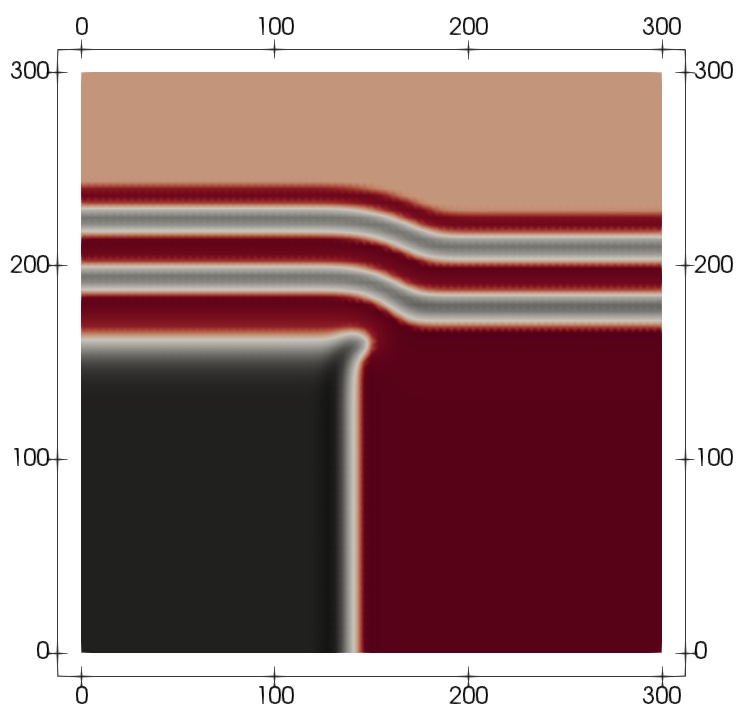}
\includegraphics[width=0.325\textwidth]{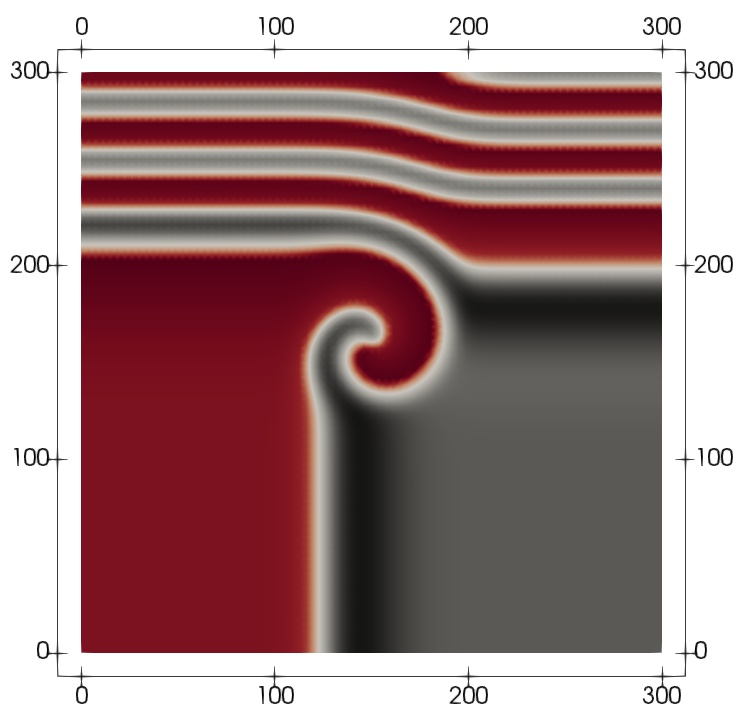}
\end{center}
\end{figure}

\section{Conclusion} 
In this paper we have addressed two main contributions. First, we have proved the well-posedness for the stationary generalized Burgers-Huxley equation. Moreover, we have established a new regularity result that only uses minimal theoretical requirements. 
Secondly, we have introduced three types of finite element approximations (CFEM, NCFEM and DGFEM) for \eqref{8.1}. We have rigorously derived a priori error estimates for all of these discretizations. Finally, computational results are given to validate the theoretical first-order  convergence of the methods. 
As a next step we are extending the theory to cover the transient case, and we will also construct efficient and reliable residual-based a posteriori error estimators and adaptive schemes. We also plan to address the formulation of other conservative discretizations using adequate mixed methods.

\medskip\noindent

\bibliographystyle{siam} 
\bibliography{newReferences}

\end{document}